\documentclass[12pt]{article}
\usepackage[T2A]{fontenc}
\usepackage[english]{babel}
\usepackage{amssymb,amsmath,amsfonts,amsthm,indentfirst,bm}
\usepackage{cite}
\usepackage[a4paper,mag=1000,includefoot,left=2cm,right=2cm,top=2cm,bottom=2cm,headsep=1cm,footskip=1cm]{geometry}
          \def\b{\beta}
\def\g{\gamma}          \def\G{\Gamma}
\def\de{\delta}         \def\De{\Delta}
\def\ve{\varepsilon}    \def\th{\theta}
\def\vk{\varkappa}
\def\la{\lambda}        

\def\s{\sigma}          

            \def\vp{\varphi}
\def\Om{\Omega}
\def\bu{{\bf u}}        
\def\hA{\hat{A}}        
\def\hf{\hat{f}}        \def\hg{\hat{g}}
\def\hp{\hat{p}}        
\def\hu{\hat{u}}
\def\hV{\hat{V}}
\def\hw{\hat{w}}
\def\hx{\hat{x}}
\def\hrho{\hat{\rho}}   
\def\het{\hat{\eta}}    \def\hth{\hat{\theta}}
\def\hpi{\hat{\pi}}     \def\hsi{\hat{\sigma}}
\def\be{\begin{equation}}   \def\ee{\end{equation}}
\def\lan{\langle}       \def\ran{\rangle}
\def\leq{\leqslant}     \def\geq{\geqslant}
\def\half{\tfrac12}

\newtheorem{theorem}{\indent Theorem}
\newtheorem{proposition}{\indent Proposition}
\newtheorem{lemma}{\indent Lemma}
\newtheorem{corollary}{\indent Corollary}
\newtheorem{rem}{\indent Remark}
\begin{document}
\centerline{\bf ON WEAK SOLUTIONS TO THE 1D COMPRESSIBLE NAVIER-STOKES}
\smallskip
\centerline{\bf EQUATIONS: A LIPSCHITZ CONTINUOUS DEPENDENCE ON DATA }
\smallskip
\centerline{\bf IN WEAKER NORMS AND AN ERROR OF THEIR HOMOGENIZATION}

\begin{center}{Alexander Zlotnik}
\end{center}
\centerline{Higher School of Economics University,
Pokrovskii bd. 11, 109028 Moscow, Russia}
\smallskip
\centerline{E--mail address: azlotnik@hse.ru}

\renewcommand{\abstractname}{}
\begin{abstract}
\noindent \textbf{Abstract}. We deal with the global in time weak solutions to the 1D compressible Navier-Stokes system of equations for large discontinuous initial data and nonhomogeneous boundary conditions of three standard types. 
We prove the Lipschitz-type continuous dependence of the solution $(\eta,u,\theta)$,
in a norm slightly stronger than $L^{2,\infty}(Q)\times L^2(Q)\times L^2(Q)$,  on the initial data $(\eta^0,u^0,e^0)$ in a norm of
$L^2(\Omega)\times H^{-1}(\Omega)\times H^{-1}(\Omega)$--type  and also on the free terms in all the equations in some dual norms.
Here $\eta$, $u$ and $\theta$ are the specific volume, velocity and absolute temperature as well as $\eta^0$, $u^0$ and $e^0$ are the initial specific volume, velocity and specific total energy, and $Q=\Omega\times (0,T)$.
We also apply this result to the case of discontinuous rapidly oscillating, with the period $\ve$, initial data and free terms and derive an estimate $O(\ve)$ for the difference between the solutions to the Navier-Stokes equations and their Bakhvalov-Eglit two-scale homogenized version with averaged data.

\par\medskip
\noindent {\bf Keywords:}
1D compressible Navier-Stokes equations,
large discontinuous data,
global weak solutions,
Lipschitz continuous dependence on data,
rapidly oscillating data,
two-scale homogenization,
homogenization error bounds.

\par\medskip
\noindent\textit{2020 Mathematics Subject Classification}. Primary: 76N06, 35B30, 76M50; Secondary: 35M33, 35D30. 
\end{abstract}
\bigskip

\section{\normalsize\rm\hspace{3pt}\bf Introduction}

The famous well-posedness theory of global in time strong solutions to initial-boundary value problems (IBVPs) for the 1D compressible Navier-Stokes system of equations for large initial data from the Sobolev space $H^1(\Omega)$ was developed by A.V. Kazhikhov near 1980 year and presented in detail in \cite[Ch. 2]{AKM83}.
For the large discontinuous initial data from the Lebesgue spaces, the corresponding well-posedness theory of global in time weak solutions was constructed in \cite{AZSMD89,AZMN92,AZRM97,ZASMJ97}, see also the related papers \cite{Ser86,H92} and the more recent review \cite{ZSprin17}.
Notice that deep investigations on global weak solutions in multidimensional case were carried out, in particular, see \cite{F04,FN09}, but their uniqueness has not yet been fully explored.

\par The latter theory contains several theorems on a Lipschitz-type continuous dependence on data, see \cite{AZSMD89,ZASMJ97,ZAMN98}.
The first of them \cite[Theorem 3(a)]{AZSMD89} was concerned the Lipschitz-type continuous dependence of the solution $(\eta,u,\theta)$,
in a norm slightly stronger than $L^{2,\infty}(Q)\times L^2(Q)\times L^2(Q)$,  on the initial data $(\eta^0,u^0,e^0)$ in a norm of
$L^2(\Omega)\times H^{-1}(\Omega)\times H^{-1}(\Omega)$--type  and also on the free terms in the momentum and internal energy equations in some dual norms (as well as on the boundary data in $W^{1,1}(0,T)$ or $L^1(0,T)$--norms).
Here $\eta$, $u$ and $\theta$ are the specific volume, velocity and absolute temperature as well as $\eta^0$, $u^0$ and $e^0$ are the initial specific volume, velocity and specific
total energy, and $Q=\Omega\times (0,T)$.
The complicated proof of this subtle theorem has not been  published before.
Note that the proof imposes stronger Lebesgue conditions on the initial data than in \cite{AZRM97,ZASMJ97}: $u^0\in L^\infty(\Omega)$ and $\theta^0\in L^2(\Omega)$, where $\theta^0$ is the initial absolute temperature.

\par The other proved theorems on the Lipschitz continuous dependence dealt with  much stronger norms of the solution like $L^\infty(Q)\times V_2(Q)\times V_q(Q)$ but on the initial data $(\eta^0,u^0,\theta^0)$ also in much stronger norms like
$L^\infty(\Omega)\times L^2(\Omega)\times L^q(\Omega)$, $q=1,2$, and the respective stronger norms of the free terms and boundary data.
Some other theorems on the same subject for weak solutions to the Cauchy problem were proved in \cite{H96,JZ04}.

\par In this paper, we present a more general theorem than \cite[Theorem 3(a)]{AZSMD89}, with additional perturbations in the mass, momentum and internal energy balance equations and the more general free terms in the momentum and energy balance equations depending also on the Eulerian coordinate (that is natural physically), and give  its full proof.
Notice that we treat simultaneously several IBVPs with three standard types of boundary conditions for $u$ mainly in the unified manner (the precise statements depend on the type of boundary conditions).
In addition, we derive corollaries on the H\"{o}lder continuity on the data in stronger norms, in general, under some additional assumptions on the data.

\par Homogenization theory is well-developed for various time-dependent PDEs, in particular, see \cite{AAD21,BB23,CD99,GVK20,T10,ZhKO94}.
In the second part of the paper, we consider the 1D compressible Navier-Stokes equations with discontinuous rapidly oscillating, with the period $\ve$, initial data and free terms.
We apply the new Lipschitz continuity theorem to derive an estimate $O(\ve)$ for the difference between the solutions to these equations and their Bakhvalov-Eglit two-scale homogenized version with averaged data; note a non-trivial averaging of $\th^0$.
The presence of the above mentioned additional perturbations is essential here.
In addition, we also obtain the smaller order estimates $O(\ve^{1/2})$ and $O(\ve^{1/4})$ for the error components in the stronger norms $C(0,T;L^2(\Om))$ and $L^\infty(Q)$ (or $C(\bar{Q})$). Notice that the homogenization error estimation is only one of possible applications of the proved theorem on Lipschitz continuity dependence on data.

\par Concerning the formal derivation of the Bakhvalov-Eglit equations, see \cite{BEDAN83} and \cite{E92}.
Their rigorous justification (in the more complicated case than here of nonhomogeneous gas with the rapidly oscillating gas parameters) was given much later in \cite{AZCMMP98}.
Previously the simpler 1D barotropic case had been studied in \cite{AZCMMP96a,AZCMMP96b,S91}, in particular, the corresponding  estimates $O(\ve)$ were derived in \cite{AZCMMP96b}.
We essentially use the techniques from these papers (the general techniques goes back to \cite{BZSMD78}).
See also \cite{AZCRASP01,S14} and \cite{AG07} on the related mathematical studies of  thermo-viscoelasticity and visco-elastoplasticity models as well as \cite{E14} on the form of equations for models of such kind.
We also mention recent studies \cite{B23,B24,B25} and \cite{H23a,H23b} concerning compressible binary mixtures and bubbly flows involving related homogenization ideas.

\par The paper is organized as follows. 
In Section 2, the main notation and important  auxiliary bounds of weak solutions to the 1D linear parabolic IBVPs are given. 
Section 3 is devoted to the statements of the IBVPs for the 1D compressible Navier-Stokes equations and its perturbation as well as a five-step lengthy proof of the Lipschitz-type continuity theorem of weak solutions on the data (the first main result of the paper).
Two its corollaries on the H\"{o}lder-type bounds in stronger norms (including those interior in time) are also presented.
In Section 4, first some auxiliary results on periodic rapidly oscillating functions are given. 
Then the Navier-Stokes equations with the discontinuous rapidly oscillating data and the corresponding Bakhvalov-Eglit two-scale homogenized equations with averaged data are considered, and the theorem on $O(\ve)$--bound for the difference between the solutions to these equations is proved (the second main result of the paper), and its corollary on smaller order bounds in stronger norms (including the uniform one) is given.

\section{\normalsize\rm\hspace{3pt}\bf The notation and auxiliary linear parabolic results}
\label{sect2}
We use the standard Lebesgue and Sobolev spaces $L^q(D)$, $W^{1,q}(D)$, with $q\in [1,\infty]$, and $H^1(D)=W^{1,2}(D)$ for
$D=\Omega:=(0,X),(0,T)$ and $Q=Q_T:=\Omega\times (0,T)$ as well as the anisotropic Lebesgue space $L^{q,r}(Q_T)$, with $q,r\in [1,\infty]$, equipped with the norm $\|w\|_{L^{q,r}(Q_T)}=\|\|w(x,t)\|_{L^q(\Om)}\|_{L^r(0,T)}$.
For a Banach space $B$, let $L^\infty(0,T;B)$ be the space of strongly measurable functions $w$: $(0,T)\to B$ having the finite norm $\|w\|_{L^\infty(0,T;H)}=\|\|w(t)\|_B\|_{L^\infty(0,T)}$, and  $C(0,T;B)$ be the space of continuous functions  $w$: $[0,T]\to B$ equipped with the norm $\|w\|_{C(0,T;B)}=\max_{0\leq t\leq T}\|w(t)\|_B$.
In the last section, similar spaces for $(0,T)$ replaced with $\Om$ will be also used.
A norm of a vector-function is understood as the sum of the same norms of its components.
 
\par For $y\in L^1(\Om)$, we define the primitive functions and the mean value over $\Om$:
\[
Iy(x):=\int_0^xy(\zeta)\,d\zeta,\ \
I^*y(x):=\int_x^Xy(\zeta)\,d\zeta,\ \
 \lan y\ran_\Om:=\frac{1}{X}\int_\Om y(x)\,dx=\frac1X Iy(X).
\]
We also introduce the integral operators
\[
 I^{\lan 1\ran}y:=Iy-\lan Iy\ran_\Om,\ \ I^{\lan 2\ran}:=I,\ \
 I^{\lan 3\ran}y:=I(y-\lan y\ran_\Om).
\]
Clearly $(I^{\lan 2\ran}y)|_{x=0}=0$, $(I^{\lan 3\ran}y)|_{x=0,X}=0$ as well as  
\be
 I^{\lan 1\ran}Ds=s-\lan s\ran_\Om,\ \
 I^{\lan 2\ran}Ds=s-s(0),\ \
 I^{\lan 3\ran}Ds(x)=s(x)-(1-\tfrac{x}{X})s(0)-\tfrac{x}{X}s(X)
\label{idp}
\ee
for $s\in W^{1,1}(\Om)$.
For $y,z\in L^1(\Om)$, the following integral identities hold
\begin{gather}
 \int_\Om (Iy)z\,dx=\int_\Om yI^*z\,dx,
\label{int id with I 1}\\
 \int_\Om (I^{\lan 1\ran}y)z\,dx
 =\int_\Om (Iy)(z-\lan z\ran_\Om)\,dx
 =\int_\Om yI^*(z-\lan z\ran_\Om)\,dx
 =-\int_\Om yI^{\lan 3\ran}z\,dx,
\label{int id with I 2}
\end{gather}
and thus $(I^{\lan 2\ran})^*=I^*$ and $(I^{\lan 3\ran})^*=-I^{\lan 1\ran}$.

\par Let $H^{1;m}$ be the subspace of functions $\vp\in H^1(\Om)$ with $\vp|_{x=0,X}=0$ for $m=1$ or $\vp|_{x=X}=0$ for $m=2$ as well as
$H^{1;3}=H^1(\Om)$. For $y\in L^1(\Om)$, we introduce the norms equivalent to the dual norms in $H^{1;m}$
\[
 \|y\|_{H^{-1;m}}:=\|I^{\lan m\ran}y\|_{L^2(\Om)}\ \ (m=1,2),\ \ 
\|y\|_{H^{-1;m}}:=\|Iy\|_{L^2(\Om)}+X|\lan y\ran_\Om|\ \ (m=3).
\]

\par  We define also the primitive in $t$ function
$I_tb(t):=\int_0^tb(\tau)\,d\tau$, for $b\in L^1(0,T)$.

\par Recall the well-known space $V_2(Q)$ of functions $\vp\in L^{2,\infty}(Q)$ with $D\vp\in L^2(Q)$ equipped with the norm
$\|\vp\|_{V_2(Q)}=\|\vp\|_{L^{2,\infty}(Q)}+\|D\vp\|_{L^2(Q)}$.

\par Let in the section $\vk>0$ in $Q$ and $\vk,\frac{1}{\vk}\in L^\infty(Q)$; below it is essential that there exists no derivative $D\vk$ that complicates our definitions and results.
Let $V_{2;\vk,m_*}(Q)$ be the subspace of functions $\vp\in V_2(Q)$ such that
\[
 \lan\tfrac{\vp}{\vk}(\cdot,t)\ran_\Om=0\ \ \text{on}\ \ (0,T)\ \ (m=1),\ \
 \vp|_{x=0}=0\ \ (m=2),\ \ \vp|_{x=0,X}=0\ \ (m=3).
\]
For $w\in L^1(Q)$, we consider the norm dual to the norm in $V_{2;\vk,m_*}(Q)$:
\[
 \|w\|_{[V_{2;\vk,m_*}(Q)]^*}
 :=\sup_{\vp\in V_{2;\vk,m_*}(Q),\,\|\vp\|_{V_2(Q)}=1}\int_Qw\vp\,dxdt\leq +\infty.
\]
For $m=2,3$, there is no dependence on $\vk$, and one can omit it in the notation.
Owing to the imbedding $V_2(Q)\subset L^{q',r'}(Q)$ \cite[Section 2.3]{LSU68} and the H\"{o}lder inequality the following inequality holds
\be
 \|w\|_{[V_{2;\vk,m_*}(Q)]^*}
 \leq \|w\|_{[V_2(Q)]^*}
 :=\sup_{\|\vp\|_{V_2(Q)}=1}\int_Qw\vp\,dxdt
 \leq C\inf_{M_0}\|w\|_{L^{q,r}(Q)},
\label{ineq3}
\ee
where $M_0$ is the set of the integrability exponents $q,r\in [1,\infty]$ such that $(2q)^{-1}+r^{-1}\leq 5/4$, and $\inf_{M_0}=\inf_{(q,r)\in M_0}$. Hereafter, for example,
$1/q'+1/q=1$.
Below we mainly need the values $(q,r)=(2,1)$, $(1,4/3)$ and $(6/5,6/5)$.
Note that this dual norm is finite for $w=w_1+\ldots+w_k$  such that $w_i\in L^{q_i,r_i}(Q)$, with $(q_i,r_i)\in M_0$, $1\leq i\leq k$, although not for any $w\in L^1(Q)$.

\par For $y,z\in L^1(\Om)$, we also need the $\frac{1}{\vk}$-weighted mean value over $\Om$ and the auxiliary operator
\be
 \lan z\ran_{\Om,1/\vk}:=\frac{\lan z/\vk\ran_\Om}{\lan 1/\vk\ran_\Om},\ \
 P_{1/\vk}y:=y-\frac{1/\vk}{\lan 1/\vk\ran_\Om}\,\lan y\ran_\Om=\frac{1}{\vk}\big(\vk y-\lan \vk y\ran_{\Om,1/\vk}\big).
\label{pvk}
\ee
Notice that $\lan P_{1/\vk}(y)\ran_\Om=0$ and the following identity holds
\be
 \int_\Om (P_{1/\vk}y)z\,dx
 =\int_\Omega\frac{1}{\vk}\big(\vk y-\lan \vk y\ran_{\Om,1/\vk}\big)z\,dx
 =\int_\Om y\big(z-\lan z\ran_{\Om,1/\vk}\big)\,dx.
\label{idenP}
\ee

\par For $w\in L^1(Q)$ such that  $\|w\|_{[V_{2;\vk,m_*}(Q)]^*}<\infty$ and $IP_{1/\vk}w\in L^2(Q)$ ($m=1$) or $(I^{\lan m\ran})^*w\in L^2(Q)$ ($m=2,3$) as well as $\vp\in V_{2;\vk,m_*}(Q)$, the integral identities hold
\begin{gather*}
 \int_Qw\vp\,dx dt=-\int_Q(I P_{1/\vk}w)D\vp\,dx dt\ (m=1),
 \int_Qw\vp\,dx dt=\int_Q\big[(I^{\lan m\ran})^*w\big]D\vp\,dx dt\ (m=2,3);
\end{gather*}
they follow from the integral identities \eqref{int id with I 1}, \eqref{int id with I 2} and \eqref{idenP} and formulas \eqref{idp}.
The last identities obviously allow us to supplement inequality \eqref{ineq3} with the following ones 
\be
 \|w\|_{[V_{2;\vk,m_*}(Q)]^*}\leq\|IP_{1/\vk}w\|_{L^2(Q)}\,\ (m=1),\ \
 \|w\|_{[V_{2;\vk,m_*}(Q)]^*}\leq\|(I^{\lan m\ran})^*w\|_{L^2(Q)}\,\ (m=2,3).
\label{ineq3a}
\ee
\par The next two propositions play an essential role below.
Let us consider 1D linear parabolic IBVP $\mathcal{L}_m$, $m=1,2,3$, such that
\begin{gather}
D_tv=Ds+F,\ \ s=\vk Dv+\psi\ \ \text{in}\ \ Q,\ \
\label{eq1l}\\[1mm]
v|_{t=0}=v^0,
\label{eq2l}\\[1mm]
v|_{x=0,X}=\mathbf{v}_b\ (m=1),\ \
s|_{x=0}=s_0,\,\ v|_{x=X}=v_X\ \ (m=2),\ \
s|_{x=0,X}=\mathbf{s}_b\ \ (m=3).
\label{bcl}
\end{gather}
Here, for example, $v|_{x=0,X}=\mathbf{v}_b$ means that $v|_{x=0}=v_0$ and
$v|_{x=X}=v_X$, as well as $\mathbf{v}_b=(v_0,v_X)$ and $\mathbf{s}_b=(s_0,s_X)$ are the pairs of boundary data;
for convenience, the functions $v_0,v_X,s_0$ and $s_X$ that are missing in the $m$th boundary condition of \eqref{bcl} are set equal to $0$.
Recall that, by definition,  \textit{the weak solution $v$ from $V_2(Q)$} \cite[Section 3.1]{LSU68} to this problem satisfies the integral identity
\begin{gather}
\int_Q\big[-vD_t\vp+(\vk Dv+\psi)D\vp-F\vp\big]\,dxdt
 =\int_\Om v^0\vp|_{t=0}\,dx
 +\int_0^T(s_X\vp|_{x=X}-s_0\vp|_{x=0})\,dt,
\label{weaksolv2}
\end{gather}
for any $\vp\in H^1(Q)$, $\vp|_{t=T}=0$, with $\vp|_{x=0,X}=0$ ($m=1$) or $\vp|_{x=0}=0$ ($m=2$), and also the boundary conditions $v|_{x=0,X}=\mathbf{v}_b$
($m=1$) or $v|_{x=X}=v_X$ ($m=2$) are valid in the sense of traces of $v\in V_2(Q)$.

\par Let $N>1$ be an arbitrarily large parameter and $K(N,T)$, possibly, with indices, be non-decreasing functions in $T$ (in the proofs, the arguments are mainly omitted).
Let $\de_{mn}$ be the Kronecker delta, i.e., $\de_{mn}=0$ for $m\neq n$ and $\de_{mm}=1$.
\begin{proposition}
\label{th1}
Let the conditions $N^{-1}\leq\vk$ and
$\|\vk\|_{L^\infty(Q)}+\|D_t\vk\|_{L^2(Q)}\leq N$ together with 
\[
\psi,F\in L^1(Q),\ \ \|\tfrac{1}{\vk}\psi\|_{[V_{2;\vk,m_*}(Q)]^*}<\infty,\ \ 
v^0\in L^1(\Om),\ \ \mathbf{v}_b\in L^{4/3}(0,T),\ \ \mathbf{s}_b\in L^1(0,T)
\]
be valid.
For a weak solution $v$ from $V_2(Q)$ to problem $\mathcal{L}_m$, $m=1,2,3$, the following bound in a weaker norm holds
\begin{gather}
\|v\|_{L^2(Q)\cap L^\infty(0,T;H^{-1;m})}
 \leq K(N,T)\Big(\|v^0\|_{H^{-1;m}}
 +\|\mathbf{v}_b\|_{L^{4/3}(0,T)}
 +\|\mathbf{s}_b\|_{L^1(0,T)}
\nonumber\\[1mm]
 +\|\tfrac{1}{\vk}\psi\|_{[V_{2;\vk,m_*}(Q)]^*}
 +\|\tfrac{1}{\vk}I^{\lan m\ran}F\|_{[V_{2;\vk,m_*}(Q)]^*}
+\de_{m3}\|\lan F\ran_\Omega\|_{L^1(0,T)}
\Big).
\label{in5}
\end{gather}
Hereafter, for example, $\|\mathbf{v}_b\|_B=\|v_0\|_B+\|v_X\|_B$ for a Banach space $B$.
\end{proposition}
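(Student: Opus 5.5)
We prove \eqref{in5} by a duality-free "integration in $x$" argument: pass from $v$ to a primitive $w$ that solves a parabolic problem in which the data appear only through weaker quantities, derive an energy estimate for $w$, and recover $v$ from $Dw$.

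\textbf{Construction of the primitive.} Take $m=2$ first. Set $w:=-I^*v$ (or $w:=I^{\lan 2\ran}v$ up to orientation, chosen so that the flux condition at $x=0$ becomes a Dirichlet condition). Integrating \eqref{eq1l} in $x$ gives, formally,
\[
D_tw=s-s_0+I^{\lan 2\ran}F,\qquad s=\vk Dv+\psi .
\]
Since $Dv=D^2w$ up to sign, $w$ solves the non-divergence equation
\[
D_t w=\vk D^2w+\psi+I^{\lan 2\ran}F-s_0 .
\]
Divide by $\vk$ and test with $-D^2w$, or equivalently test the original form with $\tfrac1{\vk}D_tw$. We use the second, cleaner route:
\[
\int_\Om\tfrac{1}{\vk}(D_tw)^2dx-\int_\Om D_tw\,D^2w\,dx=\int_\Om\tfrac1{\vk}(\psi+I^{\lan 2\ran}F-s_0)D_tw\,dx .
\]
The term $-\int D_tw\,D^2w=\tfrac12\tfrac{d}{dt}\|Dw\|^2_{L^2(\Om)}$ plus boundary terms involving $D_tw\,Dw=D_tw\cdot(\pm v)$ at $x=0,X$. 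At $x=X$ we have $v=v_X$; at $x=0$, $w=0$, so $D_tw=0$ there. This yields control of $\|Dw\|_{L^\infty(0,T;L^2)}=\|v\|_{L^\infty(0,T;H^{-1;2})}$ and of $\|\vk^{-1/2}D_tw\|_{L^2(Q)}$.

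This direct route, however, puts the right-hand side in $L^2(Q)$-type norms, which is stronger than the stated dual norms. The better approach is therefore to test \eqref{weaksolv2} with $\vp$ built from $w$ so that $\int_Q\frac1\vk\psi\,\vp$ appears with $\vp\in V_{2;\vk,m_*}(Q)$. Concretely, we use the identity $\vk D v=D_tw-\psi-I^{\lan m\ran}F+\text{(boundary function of }t)$, i.e.
\[
Dv=\tfrac1\vk D_tw-\tfrac1\vk(\psi+I^{\lan m\ran}F)+\ldots,
\]
and multiply by $w$. Then
\[
\int_\Om Dv\,w\,dx=-\int_\Om v\,Dw\,dx+\text{bdry}=\pm\|Dw\|^2+\text{bdry},
\]
while $\int\tfrac1\vk D_tw\,w=\tfrac12\tfrac{d}{dt}\int\tfrac1\vk w^2+\tfrac12\int\tfrac{D_t\vk}{\vk^2}w^2$. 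This gives
\[
\tfrac12\tfrac{d}{dt}\int_\Om\tfrac{w^2}{\vk}dx+\|v\|^2_{L^2(\Om)}=\int_\Om\tfrac1\vk(\psi+I^{\lan m\ran}F)w\,dx+\text{boundary terms}+\tfrac12\int_\Om\tfrac{D_t\vk}{\vk^2}w^2dx .
\]
Because $v=\pm Dw$, the function $w$ lies in $V_2(Q)$ with $\|w\|_{V_2}\le C(\|w\|_{L^{2,\infty}}+\|v\|_{L^2(Q)})$. The source integral over $Q$ is bounded by $\|\frac1\vk(\psi+I^{\lan m\ran}F)\|_{[V_{2;\vk,m_*}]^*}\|w\|_{V_2(Q)}$, provided $w$ satisfies the constraint defining $V_{2;\vk,m_*}(Q)$. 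For $m=2$, $w|_{x=0}=0$. For $m=3$, with $w=I^{\lan 3\ran}$-type primitive, the constraint is $w|_{x=0,X}=0$ after subtracting the mean, which generates the $\de_{m3}\|\lan F\ran_\Om\|_{L^1}$ term. For $m=1$, choose $w=Iv$ minus its $1/\vk$-weighted mean, which is exactly why $P_{1/\vk}$ and the weighted mean were introduced.

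\textbf{Remaining terms and closing the estimate.}
\begin{itemize}
\item The $D_t\vk$ term is bounded by $\|D_t\vk\|_{L^2(\Om)}\|w\|^2_{L^4(\Om)}\le C\|D_t\vk\|_{L^2}\|w\|_{L^2}^{3/2}\|Dw\|^{1/2}_{L^2}$ (Gagliardo–Nirenberg). Young's inequality reduces it to $\ve\|v\|^2+C\|D_t\vk\|^{4/3}_{L^2}\|w\|^2_{L^2}$, and $\|D_t\vk\|^{4/3}_{L^2}\in L^{3/2}(0,T)\subset L^1$, suitable for Gronwall.
\item Boundary terms involve $v_0,v_X$ times $\tfrac1\vk(\ldots)$ traces of $w$, or $s_0,s_X$ times $w$. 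Each is bounded by $\|\mathbf{s}_b\|\,\|w\|_{L^\infty(\Om)}$ with $\|w\|_{L^\infty}\le C\|w\|^{1/2}\|Dw\|^{1/2}$. For $\mathbf{v}_b$, use the $L^{4/3}$ pairing against $\|w\|_{L^{\infty}(\Om)}\in L^4(0,T)$, which is controlled by $\|w\|_{V_2}$.
\end{itemize}
For $m=1$, the condition $v|_{x=0,X}=\mathbf v_b$ enters through $\int Dv\,w=[vw]-\int v\,Dw$. Applying Gronwall with the initial value $\int\frac{(w^0)^2}{\vk}\le N\|v^0\|^2_{H^{-1;m}}$ and then taking square roots gives \eqref{in5}.

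\textbf{Main obstacle.} The steps above are formal: $v$ is only in $V_2(Q)$ and $\vk$ is not differentiable in $x$, so the key identity $\vk Dv=D_tw-\ldots$ must be justified at the weak level. We plan to do this by Steklov averaging in $t$ of \eqref{weaksolv2} with test functions $\vp=\tfrac1\vk\times$ (a bounded function). Such test functions need to have $D\vp$, which does not exist. The way around this is to never differentiate $\vk$ in $x$: derive $D_tw=s+\ldots$ directly from \eqref{weaksolv2} using $x$-cutoff test functions, and then use it pointwise a.e.

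Handling the $m=1$ weighted-mean projection and the exact boundary terms is the most delicate bookkeeping; the rest is routine.
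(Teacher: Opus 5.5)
Your plan is correct, but it is not the paper's route. The paper gives no self-contained proof. It quotes \cite[Theorem 2.2]{AZMN94} for $F=0$ ($m=1,2$) or $F=F(t)$ ($m=3$), and notes via \eqref{idenP} that its $m=1$ norm of $\psi$ is the one used there. General $F$ is then handled by writing $F=DI^{\lan m\ran}F$ (plus $\lan F\ran_\Om$ for $m=3$) as in \eqref{fpsi} and absorbing $I^{\lan m\ran}F$ into $\psi$. This is legitimate because $I^{\lan m\ran}F$ vanishes exactly at the endpoints where flux data are prescribed.

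You instead carry out the $\tfrac{1}{\vk}$-weighted energy estimate for the primitive $w$ yourself. In your argument $I^{\lan m\ran}F$ appears next to $\psi$ automatically. The constraints defining $V_{2;\vk,m_*}(Q)$ are precisely the endpoint or weighted-mean conditions satisfied by $w$, which explains $P_{1/\vk}$ for $m=1$. The Gagliardo--Nirenberg treatment of the $D_t\vk$ term explains the hypothesis $D_t\vk\in L^2(Q)$. Pairing $\mathbf{v}_b$ with $\|w\|_{L^\infty(\Om)}\in L^4(0,T)$ explains the $L^{4/3}$ boundary norm. The paper's route is a short reduction to a cited theorem; yours is self-contained and shows where every hypothesis is used.

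Points to fix when writing it out:
\begin{itemize}
\item For $m=2$ take $w=Iv$, not $-I^*v$; the latter brings in the unknown trace $s|_{x=X}$.
\item For $m=1$ compute $D_tw$ through $D_tI^{\lan 1\ran}v=s-\lan s\ran_\Om+I^{\lan 1\ran}F$ in $L^1(Q)$. Then no trace $s|_{x=0}$ is ever needed, and the remaining function of $t$ is killed by $\lan w/\vk\ran_\Om=0$.
\item For $m=3$ estimate $\lan v\ran_\Om$ separately from $X\,\tfrac{d}{dt}\lan v\ran_\Om=s_X-s_0+X\lan F\ran_\Om$. This feeds both $\|v\|_{L^2(Q)}$ and the $X|\lan v\ran_\Om|$ part of $\|\cdot\|_{H^{-1;3}}$.
\item To use the dual norm on $Q_t$ inside Gronwall, extend $w$ by zero in $t$. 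This preserves membership in $V_{2;\vk,m_*}(Q)$ and the $V_2$ norm, since $V_2$ imposes no time regularity.
\end{itemize}

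The justification is also easier than your last paragraph suggests. No Steklov averaging and no test functions of the form $\vp=\vk^{-1}(\cdot)$ are needed. The $x$-integrated identity (obtained with $x$-cutoffs, as in \eqref{dits} via \cite[Lemma 2.1]{AZMN94}) gives $D_tw\in L^1(Q)$. Together with $w\in L^\infty(Q)$ and $D_t(1/\vk)\in L^2(Q)$, this makes $t\mapsto\int_\Om w^2/\vk\,dx$ absolutely continuous. Since $\tfrac{w}{\vk}\,\vk Dv=wDv$ pointwise, $\vk$ is never differentiated in $x$. Your discarded first route (testing with $\tfrac{1}{\vk}D_tw$) would require $v^0\in L^2(\Om)$ and can simply be omitted.
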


\par We emphasize that hereafter imposing conditions on the data norms like above $\|\vk\|_{L^\infty(Q)}+\|D_t\vk\|_{L^2(Q)}\leq N$, we \textit{automatically} assume that $\vk\in L^\infty(Q)$ and there exists $D_t\vk\in L^2(Q)$.
This allows us to shorten assumptions in the statements in a natural way.

\par This result follows from \cite[Theorem 2.2]{AZMN94}. 
We comment that, for $m=1$, our norm of $\psi$ looks different but it is the same as in \cite{AZMN94} due to identity \eqref{idenP} for $y=\frac{\psi}{\vk}$.
The norm of $F$ is not the same as in \cite{AZMN94} but, owing to the presence of term $\psi$, the case of general $F$ is reduced simply to $F=0$ for $m=1,2$ and $F=F(t)$ for $m=3$ with the help of formulas
\be
 F=DI^{\lan m\ran}F\ \ (m=1,2),\ \
 F=DI^{\lan 3\ran}F+\lan F\ran_\Omega\ \ (m=3).
\label{fpsi}
\ee
Note that the presence of the space $L^\infty(0,T;H^{-1;m})$ on the left in bound \eqref{in5} is essential below. 

\par We also need to deal with \textit{the generalized IBVP $\mathcal{L}_m$}, $m=1,2,3$, involving the more general equation and initial condition than
(\ref{eq1l}) and (\ref{eq2l}), namely,
\[
 D_tv=Ds+F+D_tG,\ \ s=\vk Dv+\psi\ \ \text{in}\ \ Q,\ \
 (v-G)|_{t=0}=v^0
\]
and the same boundary conditions \eqref{bcl}.
Following \cite{LSU68}, a function $v\in L^2(Q)$ is called its {\it weak solution from} $L^2(Q)$ if it satisfies the integral identity
\begin{gather}
 -\int_Q v\big[D_ty+D(\vk Dy)\big]\,dx dt
 =\int_Q\big(-\psi Dy+Fy-GD_ty\big)\,dx dt+\int_\Om v^0y|_{t=0}\,dx
\nonumber\\
+\int_0^T\big\{s_X y|_{x=X}-s_0 y|_{x=0}
 -\big[v_X(\vk Dy)|_{x=X}-v_0(\vk Dy)|_{x=0}\big]\big\}\,dt
\label{idenL2}
\end{gather}
for any $y\in H^{2,1;\vk,m}(Q)$; the definition is correct for $G\in L^2(Q)$.

\par Here $H^{2,1;\vk,m}(Q)$ is the Banach space of functions $y\in H^1(Q)$ such that $\vk Dy\in V_2(Q)$ and
\begin{gather*}
y|_{x=0,X}=0\ (m=1),\ \
 (\vk Dy)|_{x=0}=0,\ \ y|_{x=X}=0\ (m=2),\ \
 (\vk Dy)|_{x=0,X}=0\ (m=3)
\end{gather*}
as well as $y|_{t=T}=0$.
Thus $\vk Dy\in V_{2;\vk,m_*}(Q)$ for $m=1,2,3$ that we use below in \eqref{in6_1}.
We define the norm in $H^{2,1;\vk,m}(Q)$ and, for $F\in L^1(Q)$, the dual one
\[
 \|y\|_{H^{2,1;\vk,m}(Q)}:=\|D_ty\|_{L^2(Q)}+\|\vk Dy\|_{V_2(Q)},\ \
 \|F\|_{[H^{2,1;\vk,m}(Q)]^*}
 :=\sup_{\|y\|_{H^{2,1;\vk,m}(Q)}=1}\int_Q Fy\,dx dt.
\]
Let $N^{-1}\leq\vk$ in $Q$.
The simple imbedding $\|y\|_{L^\infty(Q)}\leq CN\|y\|_{H^{2,1;\vk,m}(Q)}$ implies the inequality
\be
 \|F\|_{[H^{2,1;\vk,m}(Q)]^*}\leq CN\|F\|_{L^1(Q)}.
\label{in5a}
\ee
Also $y|_{x=\alpha}\in L^\infty(0,T)$ for any $0\leq\alpha\leq X$,
thus identity \eqref{idenL2} is correct for $\mathbf{s}_b\in L^1(0,T)$.
\par Analyzing the above integral, we use formulas (\ref{fpsi}), integrate by parts in $x$ and $t$ and get
\be
 \int_Q Fy\,dx dt
 =-\int_Q (I^{\lan m\ran}F)Dy\,dx dt
 -\de_{m3}\int_Q \big(I_t\lan F\ran_\Omega\big)D_ty\,dx dt,
\label{in5b}
\ee
and then supplement inequality \eqref{in5a} with the following one  
\begin{gather}
 \|F\|_{[H^{2,1;\vk,m}(Q)]^*}
 \leq \|\tfrac{1}{\vk}I^{\lan m\ran}F\|_{[V_{2;\vk,m_*}(Q)]^*}
 +\de_{m3}X^{1/2}\|I_t\lan F\ran_\Omega\|_{L^2(0,T)}.
\label{est for F}
\end{gather}
Notice that the norm $\|F\|_{[H^{2,1;\vk,m}(Q_t)]^*}$ is non-decreasing in $0<t\leq T$ (since the extension of $y\in H^{2,1;\vk,m}(Q_t)$ by zero on $Q_T\setminus Q_t$ belongs to $H^{2,1;\vk,m}(Q_T)$).
\begin{proposition}
\label{th2}
Let the conditions on the data from Proposition \ref{th1} be valid and $G\in L^2(Q)$.
Then the weak solution $v$ from $L^2(Q)$ to problem $\mathcal{L}_m$, $m=1,2,3$, exists, is unique and obeys the bound
\begin{gather}
\|v\|_{L^2(Q)}
 \leq K(N,T)\big(\|v^0\|_{H^{-1;m}}
 +\|\mathbf{v}_b\|_{L^{4/3}(0,T)}+\|\mathbf{s}_b\|_{L^1(0,T)}
\nonumber\\[1mm]
 +\|\tfrac{1}{\vk}\psi\|_{[V_{2;\vk,m_*}(Q)]^*}
 +\|F\|_{[H^{2,1;\vk,m}(Q)]^*}+\|G\|_{L^2(Q)}\big).
\label{in6}
\end{gather}
\end{proposition}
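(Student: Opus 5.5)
We would prove Proposition \ref{th2} by the classical duality (transposition) method, using Proposition \ref{th1} applied to the adjoint (backward) problem. The key observation is that the left-hand side of \eqref{idenL2} is the pairing of $v$ with $-[D_ty+D(\vk Dy)]$. It therefore suffices to show that, for any $g\in L^2(Q)$, the backward problem
\[
-D_ty-D(\vk Dy)=g \ \text{in}\ Q,\quad y|_{t=T}=0,
\]
with the homogeneous boundary conditions defining $H^{2,1;\vk,m}(Q)$, has a unique solution $y\in H^{2,1;\vk,m}(Q)$. We would also need the bound $\|y\|_{H^{2,1;\vk,m}(Q)}\le K\|g\|_{L^2(Q)}$ together with the trace bounds $\|y|_{x=0,X}\|_{L^\infty(0,T)}+\|(\vk Dy)|_{x=0,X}\|_{L^4(0,T)}\le K\|g\|_{L^2(Q)}$.

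Granting this, the right-hand side of \eqref{idenL2} defines a linear functional $\ell(g)$. We estimate it term by term. The initial term gives $\int_\Om v^0 y|_{t=0}dx$, which we rewrite via \eqref{int id with I 1}, \eqref{int id with I 2}, \eqref{idp} as a pairing of $I^{\lan m\ran}v^0$ with $Dy|_{t=0}$, plus a mean-value term for $m=3$. This is bounded by $\|v^0\|_{H^{-1;m}}\|Dy\|_{L^{2,\infty}}$, and the latter is controlled since $\vk Dy\in V_2$ and $\vk\ge N^{-1}$. The $\psi$ term is written as $\int_Q \frac{\psi}{\vk}(\vk Dy)\,dxdt$, with $\vk Dy\in V_{2;\vk,m_*}(Q)$. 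For $m=1$ the constraint $\lan \vk Dy/\vk\ran_\Om=\lan Dy\ran_\Om=0$ follows from $y|_{x=0,X}=0$. This term is therefore bounded by $\|\frac1\vk\psi\|_{[V_{2;\vk,m_*}]^*}\|y\|_{H^{2,1;\vk,m}}$. The $F$ term is bounded by its dual norm by definition, and the $G$ term by $\|G\|_{L^2}\|D_ty\|_{L^2}$. The boundary terms are handled with the trace bounds, using Hölder with exponents $4/3$ and $4$, and $1$ and $\infty$. Thus $|\ell(g)|\le K\cdot(\text{RHS of }\eqref{in6})\,\|g\|_{L^2(Q)}$. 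By the Riesz theorem there is a unique $v\in L^2(Q)$ with $\int_Q vg=\ell(g)$ for all $g$, and this $v$ obeys \eqref{in6}. Since the map $y\mapsto g$ is onto $L^2(Q)$ from $H^{2,1;\vk,m}(Q)$ and injective, this gives both existence and uniqueness of the weak solution from $L^2(Q)$.

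The main obstacle is the solvability and the $H^{2,1}$-type estimate for the backward problem, because $\vk$ has no $x$-derivative. Here we would pass to the flux $w=\vk Dy$ rather than $y$. Formally, differentiating in $x$ and dividing by $\vk$ gives $D_t(w/\vk)+\frac{D_t\vk}{\vk^2}w+\dots$. Instead, we would perform the energy estimate directly: multiply the equation by $D_t y$ to get $\|D_ty\|_{L^2}^2$ and $\sup_t\int\vk|Dy|^2$. This uses $\int \vk Dy\,D D_ty=\frac12 \frac{d}{dt}\int\vk|Dy|^2-\frac12\int D_t\vk|Dy|^2$. The last term requires $\|D_t\vk\|_{L^2(Q)}\le N$ together with an $L^\infty$ bound on $Dy$ in $x$. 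That bound comes from $D(\vk Dy)=-g-D_ty\in L^2$, via the Gagliardo–Nirenberg estimate $\|w\|_{L^\infty(\Om)}^2\le C\|w\|_{L^2}(\|w\|_{L^2}+\|Dw\|_{L^2})$, and then Gronwall. The resulting bound on $D(\vk Dy)$ in $L^2(Q)$ gives $\vk Dy\in V_2(Q)$, and the trace bounds follow from the same interpolation. Existence would then be obtained by Galerkin approximation or by mollifying $\vk$ in $t$, using these uniform estimates. This is essentially the dual statement of Proposition \ref{th1}, and one could alternatively cite \cite{AZMN94,LSU68} for it.
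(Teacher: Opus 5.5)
Your proposal is correct, but it is not how the paper argues. The paper gives no self-contained proof: it derives Proposition~\ref{th2} from the more general \cite[Theorem 2.6]{ZASMJ97}. The only step it makes explicit is how the $\psi$-term enters, via \eqref{in6_1}: one writes $\int_Q\psi Dy\,dxdt=\int_Q\frac{1}{\vk}\psi\,(\vk Dy)\,dxdt$ with $\vk Dy\in V_{2;\vk,m_*}(Q)$. This is exactly your treatment, including the check $\lan Dy\ran_\Om=0$ for $m=1$. The paper also notes that the $F$-term carries the dual norm.

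Your transposition argument is a self-contained reconstruction of that cited result. Its only substantial input is the $H^{2,1;\vk,m}(Q)$-solvability of the backward adjoint problem, and your estimate for it is sound. You use $D(\vk Dy)=-g-D_ty$, the one-dimensional interpolation for $\vk Dy$, and Young's inequality. The term $\int_\Om D_t\vk\,|Dy|^2dx$ is then absorbed with a Gronwall weight $C(N)(a+a^{4/3})$, where $a(t)=\|D_t\vk(\cdot,t)\|_{L^2(\Om)}$. This weight is integrable because $a\in L^2(0,T)$.

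What your route buys is transparency about where each hypothesis is used:
\begin{itemize}
\item $\|D_t\vk\|_{L^2(Q)}\le N$ is needed only for the adjoint problem.
\item The exponents $4/3$ and $1$ for $\mathbf{v}_b$ and $\mathbf{s}_b$ are dual to the traces $(\vk Dy)|_{x=0,X}\in L^4(0,T)$ and $y\in L^\infty(Q)$.
\item Existence and uniqueness follow at once from the bijectivity of $y\mapsto -D_ty-D(\vk Dy)$ from $H^{2,1;\vk,m}(Q)$ onto $L^2(Q)$.
\end{itemize}
The citation, in turn, gives a more general statement.

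Two details to tidy up.
\begin{itemize}
\item Get injectivity of this map on $H^{2,1;\vk,m}(Q)$ from the plain $L^2$ energy identity (multiply by $y$). It needs no extra regularity and gives $\tfrac12\|y(t)\|^2_{L^2(\Om)}+\int_t^T\!\int_\Om\vk|Dy|^2dxd\tau=0$.
\item For existence, apply your uniform estimate to exact solutions of problems with regularized $\vk$, where $D(\vk Dy)\in L^2(Q)$ genuinely holds. Do not apply it to Galerkin approximants: for them $D(\vk Dy_n)=-g-D_ty_n$ holds only after projection, so the $L^\infty(\Om)$-bound on $Dy_n$ is not available. The limit passage is then harmless, since $\vk_\epsilon Dy_\epsilon$ converges weakly in $L^2(Q)$ to $\vk Dy$ ($\vk_\epsilon\to\vk$ boundedly a.e. and $Dy_\epsilon\to Dy$ weakly).
\end{itemize}
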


\par The proposition is a consequence of a more general result \cite[Theorem 2.6]{ZASMJ97}. 
We comment that the term with $\psi$ appears in \eqref{in6} due to the inequality
\begin{gather}
\sup_{\|y\|_{H^{2,1;\vk,m}(Q)}=1}\int_Q \psi Dy\,dx dt\leq\sup_{\|z\|_{V_{2;\vk,m_*}(Q)}=1}\int_Q \tfrac{1}{\vk}\psi z\,dx dt=\|\tfrac{1}{\vk}\psi\|_{[V_{2;\vk,m_*}(Q)]^*}.
\label{in6_1}
\end{gather}
Also, according to inequality \eqref{est for F}, the norm of $F$ in bound \eqref{in6} is weaker than in  \eqref{in5}.

\par Notice that we could consider some data as distributions instead of $L^1$--functions in both Propositions \ref{th1} and \ref{th2} as well as  state Proposition \ref{th1} as the existence and uniqueness result for a weaker solution than from $V_2(Q)$.
But we do not need such generalizations below, and thus we avoid them to clarify and simplify the presentation and proof of the main result.

\par Below to bound solutions of integral inequalities in time, we often apply the well-known Gron\-wall--Bellman lemma, for example, see, \cite[Section II.2]{AKM83} and the next one which is a particular case of \cite[Lemma 2.2]{AZMN94}.
\begin{lemma}
\label{lem1}
Let $1\leq r_1<r\leq\infty$.
If $y\in L^r(0,T)$ satisfies the integral inequality
\[
\|y\|_{L^r(0,t)}\leq N\|y\|_{L^{r_1}(0,t)}+C_0\ \ \text{for any}\ \ 0< t\leq T,
\]
with some constant $C_0\geq 0$, then $\|y\|_{L^r(0,T)}\leq K(N,T)C_0$.
Here $K(N,T)$ depends only on $N$, $T$, $r$ and $r_1$ and is non-decreasing in $T$.
\end{lemma}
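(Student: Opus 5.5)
The plan is to iterate the inequality on a fine grid of time intervals, using Hölder to convert the lower norm $L^{r_1}$ into a small multiple of the higher norm $L^r$ on short intervals. The key tool is the interpolation/Hölder estimate on an interval $(a,b)$ of length $h$:
\[
\|y\|_{L^{r_1}(a,b)}\leq h^{1/r_1-1/r}\|y\|_{L^r(a,b)},
\]
with $\alpha:=1/r_1-1/r>0$ because $r_1<r$.

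I choose the step $h>0$ so that $Nh^\alpha\leq 1/2$, for instance $h=(2N)^{-1/\alpha}$. Set $t_k=\min(kh,T)$, which gives $n=\lceil T/h\rceil$ steps, and write $Y_k:=\|y\|_{L^r(0,t_k)}$.

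The induction then runs as follows.

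\begin{enumerate}
\item Apply the hypothesis at $t=t_k$.
\item Split the $L^{r_1}$ norm over $(0,t_{k-1})$ and $(t_{k-1},t_k)$, using that $\|\cdot\|_{L^{r_1}}^{r_1}$ is additive, so $\|y\|_{L^{r_1}(0,t_k)}\leq\|y\|_{L^{r_1}(0,t_{k-1})}+\|y\|_{L^{r_1}(t_{k-1},t_k)}$.
\item Bound the first piece crudely by $T^\alpha Y_{k-1}$.
\item Bound the second piece by $h^\alpha\|y\|_{L^r(0,t_k)}$.
\end{enumerate}

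This yields $Y_k\leq NT^\alpha Y_{k-1}+\tfrac12 Y_k+C_0$. Since $y\in L^r(0,T)$, $Y_k$ is finite, so it can be absorbed: $Y_k\leq 2NT^\alpha Y_{k-1}+2C_0$. Starting from $Y_0=0$ and iterating $n$ times gives $Y_n\leq 2C_0\sum_{j<n}(2NT^\alpha)^j$.

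This is a constant depending only on $N,T,r,r_1$. To make it non-decreasing in $T$, I will either take the supremum over $T'\leq T$ of the bound or note that each factor ($n$ and $T^\alpha$) is non-decreasing in $T$.

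The main obstacle is the absorption step. It needs finiteness of $\|y\|_{L^r(0,t_k)}$, which the assumption $y\in L^r(0,T)$ provides. It also requires that the constant be chosen independent of $y$, which holds because $h$ depends only on $N$ and $\alpha$.

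The case $r=\infty$ needs no special treatment, since there $\alpha=1/r_1$ and the argument goes through unchanged.
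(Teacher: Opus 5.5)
Your proof is correct. Hölder gives $\|y\|_{L^{r_1}(a,b)}\le (b-a)^{\alpha}\|y\|_{L^{r}(a,b)}$ with $\alpha=1/r_1-1/r>0$, and this includes $r=\infty$. The splitting at $t_{k-1}$ is just the triangle inequality. Absorbing $\tfrac12 Y_k$ is legitimate because $Y_k\le\|y\|_{L^r(0,T)}<\infty$. Induction from $Y_0=0$ then gives $\|y\|_{L^r(0,T)}=Y_n\le 2C_0\sum_{j<n}(2NT^{\alpha})^j$ with $n=\lceil T(2N)^{1/\alpha}\rceil$. This constant depends only on $N,T,r,r_1$ and is non-decreasing in $T$.

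The paper does not prove this lemma. It only quotes it as a particular case of Lemma 2.2 of the 1994 Amosov--Zlotnik reference, so your short-interval stepping argument is a self-contained substitute for that citation.

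The price is a crude constant. Because you bound the past piece by $T^{\alpha}Y_{k-1}$, $K$ is at most $2n\max\{1,2NT^{\alpha}\}^{n}$. This does not matter here: the paper uses only the existence of $K(N,T)$ and its monotonicity in $T$. That monotonicity is what lets the lemma be applied on each $(0,t)$ in Steps 2 and 3 of the proof of Theorem~\ref{th3}, with $(r_1,r)=(4,\infty)$ and $(1,2)$ respectively. There the remaining right-hand side is not a constant but a non-decreasing function of $t$, so one takes its value at $t$ as $C_0$.
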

\section{\normalsize\rm\hspace{3pt}\bf The initial-boundary value problems for the 1D compressible Navier-Stokes system of equations}
\label{sect3}
The 1D compressible Navier-Stokes system of equations describes 1D flows of a viscous heat-conducting gas and
consists of the mass, momentum and internal energy balance equations
\begin{gather}
D_t\eta=Du,
\label{eq1}\\
D_tu=D\s+g[x_e],
\label{eq2}\\
c_VD_t\theta=D\pi+\s Du+f[x_e],
\label{eq3}
\end{gather}
together with the equation and formulas
\begin{gather}
 D_tx_e=u,\ \ \s=\nu\rho Du-p,\ \ \rho=\tfrac{1}{\eta},\ \
 p=k\rho\theta,\ \ \pi=\la\rho D\theta
\label{eq5}
\end{gather}
in the Lagrangian mass coordinates $(x,t)\in\bar{Q}$.
The sought functions $\eta>0$, $u$, $\theta>0$ and $x_e$ are the specific volume, velocity, absolute temperature and the Eulerian coordinate.
The functions $\s$, $\rho$, $p$ and $-\pi$ are the stress, density, pressure and heat flux.
Moreover, $g(\chi,x,t)$ is the density of body forces, and $f(\chi,x,t)\geq 0$ is the intensity of heat sources as well as $h[x_e](x,t)=h(x_e(x,t),x,t)$ for $h=g,f$.
Also $\nu>0$, $k>0$ and $\la>0$ are physical constants.

\par The system is supplemented with the initial conditions and non-homogeneous boundary conditions
\begin{gather}
(\eta,u,\theta,x_e)|_{t=0}=(\eta^0,u^0,\theta^0,x_e^0),
\label{ic}\\[1mm]
u|_{x=0,X}=\mathbf{u}_b\ (m=1),\ \
\s|_{x=0}=-p_0,\ u|_{x=X}=u_X\ (m=2),\ \
\s|_{x=0,X}=-\mathbf{p}_b\ (m=3),
\label{bc}\\[1mm]
\pi|_{x=0,X}={\bm{\pi}}_b.
\label{bc4}
\end{gather}
Here $x_e^0=I\eta^0$, and $\mathbf{u}_b=(u_0,u_X)$, $\mathbf{p}_b=(p_0,p_X)$ and $\bm{\pi}_b=(\pi_0,\pi_X)$ are the pairs of boundary data;
the functions $u_0,u_X,p_0$ and $p_X$ that are missing in the $m$th boundary condition of \eqref{bc} are set equal to $0$.
Recall that
$u_0$ and $u_X$, $p_0$ and $p_X$ as well as $-\pi_0$ and $-\pi_X$ are respectively the given velocities, outer pressures as well as heat fluxes at the left and right boundaries all depending on $t$.
We denote IBVP (\ref{eq1})--(\ref{bc4}) by $\mathcal{P}_m$.

\par Below we need the following conditions on the initial data, the terms $g$ and $f$ and the boundary data, respectively. 
Recall that $N>1$ is a parameter.
\par $(C_1)$ Let $N^{-1}\leq\eta^0$ and $0<\th^0$ on $\Om$ and
\[
\|\eta^0\|_{L^\infty(\Om)}+\|u^0\|_{L^\infty(\Om)}+\|\th^0\|_{L^2(\Om)}+\|\ln\th^0\|_{L^1(\Om)}\leq N.
\]
The term with $\ln\th^0$ can be omitted provided that $N^{-1}\leq\th^0$ on $\Om$.

\par $(C_2)$ Let $g(\chi,x,t)$ and $f(\chi,x,t)$ be  measurable on $\mathbb{R}\times Q$ and satisfy there the bounds
$|g(\chi,x,t)|\leq\bar{g}(x,t)$ and $0\leq f(\chi,x,t)\leq\bar{f}(x,t)$ with
$\|\bar{g}\|_{L^2(Q)}\leq N$ and $\|\bar{f}\|_{L^{2,1}(Q)}\leq N$.
Let also, for some $2\leq q_e\leq\infty$, the Lipschitz-type condition containing the Sobolev derivatives in $\chi$ hold
\be
 \|D_\chi g\|_{L^{\infty,q_e',1}((-a,a)\times Q)}\leq C_0(a),\ \
 \|D_\chi f\|_{L^{\infty,q_e',1}((-a,a)\times Q)}\leq C_0(a)\ \ \text{for any}\ \ a>1.
\label{ggff}
\ee
\par $(C_3)$ Let $\|\mathbf{u}_b\|_{W^{1,1}(0,T)}+\|\mathbf{p}_b\|_{W^{1,1}(0,T)}+\|\bm{\pi}_b\|_{L^{4/3}(0,T)}\leq N$, where $p_{\alpha}\geq 0$ and $\pi_{\alpha}\geq 0$ on $(0,T)$, for $\alpha=0,X$.
Let also 
\begin{gather}
N^{-1}\leq\|\eta^0\|_{L^1(\Om)}+I_t(u_X-u_0)\ \ \text{on}\ \ (0,T)\ \ (m=1),
\label{gas volume}\\
p_\alpha(t+\tau)-p_\alpha(t)\leq a_\tau(t)\int_t^{t+\tau}p_\alpha(q)\,dq
\ \ \text{for}\ \ 0<t<T-\tau,\ \ \alpha=0,X\ \  (m=2,3),
\label{cond on p}
\end{gather}
with some $a_\tau\geq 0$ on $(0,T-\tau)$, for any $0<\tau<T$, and $\sup_{0<\tau<T}\|a_\tau\|_{L^1(0,T-\tau)}\leq K_0(N)$.

\par Condition \eqref{gas volume} is necessary since the following well-known formula for the gas volume holds (arising after integration of equation \eqref{eq1} over $Q_t$)
\be
V(t):=\|\eta(\cdot,t)\|_{L^1(\Om)}=\|\eta^0\|_{L^1(\Om)}+I_t(u_X-u_0)\ \ \text{on}\ \ [0,T]\ \ (m=1).
\label{meta}
\ee
Also it is not difficult to check that condition \eqref{cond on p} is valid, in particular, if $N^{-1}\leq p_\alpha$ on $(0,T)$, or $D_tp_\alpha\leq K_0(N)p_\alpha$ on $(0,T)$.

\par Recall that, under conditions $(C_1)$--$(C_3)$, there exists a weak solution to problem $\mathcal{P}_m$ such that
$\eta,1/\eta\in L^\infty(Q)$, $D_t\eta\in L^2(Q)$, $u,u^2,\th\in V_2(Q)$, $x_e\in H^1(Q)$, and the following its bounds hold
\[
 K_0^{-1}(N)\leq\eta\leq K_0(N),\,\ \|u\|_{V_2(Q)}+\|u^2\|_{V_2(Q)}+\|\th\|_{V_2(Q)}
 +\|x_e\|_{H^1(Q)}
 \leq K(N).
\]
In addition, if $N^{-1}\leq\th^0$ on $\Om$, then $K(N)^{-1}\leq\th$ on $Q$.
See \cite{AZRM97} for the existence of a more weak solution and \cite[Proposition 6.2]{AZRM99} for the additional regularity of $u$ and $\th$ presented here as well as see the review \cite{ZSprin17}; the more detailed results on weak solutions can be found there.
The solution is also unique if the first condition \eqref{ggff} holds with $q_e=2$ \cite[Theorem 1.2]{ZASMJ97}, but the next theorem shows that the restriction $q_e=2$ is not required here.
Not all of the listed properties of the solution are used below in the main Theorem~\ref{th3}.

\par We recall that this solution satisfies equations $D_t\eta=Du$ and $D_tx_e=u$
in $L^2(Q)$ together with the initial conditions $\eta|_{t=0}=\eta^0$ and $x_e|_{t=0}=x_e^0$ in $C(0,T;L^2(\Om))$.
Next, it also satisfies the momentum balance equation \eqref{eq2}, where $\s$ is given in \eqref{eq5}, together with the initial condition $u|_{t=0}=u^0$ and partially the boundary conditions \eqref{bc}, in the weak sense
\begin{gather*}
\int_Q\big[-uD_t\vp+(\nu\rho Du-p)D\vp-g[x_e]\vp\big]\,dxdt
 =\int_\Om u^0\vp|_{t=0}\,dx
 -\int_0^T(p_X\vp|_{x=X}-p_0\vp|_{x=0})\,dt,
\end{gather*}
for the same $\vp$ as in the similar integral identity \eqref{weaksolv2}, and also the boundary conditions $u|_{x=0,X}=\mathbf{u}_b$
($m=1$) or $u|_{x=X}=u_X$ ($m=2$) hold in the sense of traces.
Finally, the solution  satisfies the internal energy balance equation \eqref{eq3}, with $\pi$ given in \eqref{eq5}, together with  the initial condition $\th|_{t=0}=\th^0$ and the boundary conditions \eqref{bc4},
in the similar weak sense
\begin{gather}
\int_Q\big[-c_V\th D_t\vp+\la\rho(D\th)D\vp-(\s Du+f[x_e])\vp\big]\,dxdt
\nonumber\\
=\ell(\th^0,\bm{\pi}_b;\vp):=\int_\Om \th^0\vp|_{t=0}\,dx
+\int_0^T(\pi_X\vp|_{x=X}-\pi_0\vp|_{x=0})\,dt,
\label{weaksolv2 for th}
\end{gather}
for any $\vp\in H^1(Q)\cap L^\infty(Q)$, $\vp|_{t=T}=0$; the assumption $\vp\in  L^\infty(Q)$ is required since $\sigma Du\in L^1(Q)$ only.
In fact, this means that $u$ and $\th$ are considered as the weak solutions from $V_2(Q)$ from the corresponding parabolic IBVPs like $\mathcal{L}_m$ and $\mathcal{L}_3$, respectively.

\par But below instead of identity \eqref{weaksolv2 for th} we deal with the weaker  $L^2(Q)$--solution such that
\begin{gather}
\int_Q\big[-\th(c_V D_t\vp+D(\la\rho D\vp)]-(\s Du+f[x_e])\vp\big]\,dxdt
=\ell(\th^0,\bm{\pi}_b;\vp)
\label{weaksolv3 for th}
\end{gather}
for any $\vp\in  H^{2,1;\vk,3}(Q)$,
see identity \eqref{idenL2} for the IBVP $\mathcal{L}_m$ in the case $m=3$.
\par In addition, owing to the (coarsened) well-known $L^\infty(Q)$--bound for the solution from $V_2(Q)$ to the linear parabolic IBVP $\mathcal{L}_m$ we can guarantee that  
\be
 \|u\|_{L^\infty(Q)}
 \leq K(N)\big(\|u^0\|_{L^\infty(\Om)}+\|\mathbf{u}_b\|_{L^\infty(0,T)}+\|\mathbf{p}_b\|_{L^\infty(0,T)}
 +\|p\|_{L^4(Q)}
+\|\bar{g}\|_{L^2(Q)}\big)
 \leq K_1(N),
\label{Linf for u}
\ee
see \cite[Section III.7]{LSU68} and more specifically \cite[Theorem 3.2]{AZMB96}.

\par To analyze the continuous dependence on data in a general framework, along with the stated IBVP $\mathcal{P}_m$, we consider the similar perturbed system of equations
\begin{gather}
D_t\het=D\hu+\beta,
\label{eq1h}\\[1mm]
D_t\hu=D\hsi+\hat{g}[\hx_e],
\label{eq2h}\\[1mm]
c_VD_t\hth=D\hpi+\hsi D\hu+\hat{f}[\hx_e],
\label{eq3h}\\[1mm]
 D_t\hx_e=\hu,\ \ \hsi=\nu\hrho (D\hu+\beta)-\hp,\ \ 
 \hrho=\tfrac{1}{\het},\ \
  \hp=k\hrho\hth,\ \
 \hpi=\la\hrho(D\hth+\g)
\label{eq5h}
\end{gather}
in $Q$, supplemented with the perturbed initial and boundary conditions
\begin{gather}
(\het,\hu,\hth,\hx_e)|_{t=0}=(\het^0,\hu^0,\hth^0,\hx_e^0),
\label{ich}\\[1mm]
\hu|_{x=0,X}=\mathbf{\hu}_b\ (m=1), \
\hsi|_{x=0}=-\hp_0,\ \hu|_{x=X}=\hu_X\ (m=2),\ \
\hsi|_{x=0,X}=-\mathbf{\hp}_b\ (m=3),
\label{bch}\\[1mm]
\hpi|_{x=0,X}={\bm{\hpi}}_b.
\label{bc4h}
\end{gather}
Here $\hx_e^0:=I\hat{\eta}^0+\beta_e$ with some $\beta_e$, as well as $\mathbf{\hu}_b=(\hu_0,\hu_X)$, $\mathbf{\hp}_b=(\hp_0,\hp_X)$ and ${\bm{\hpi}}_b=(\hpi_0,\hpi_X)$ are the pairs of boundary data; once again, 
the functions $\hu_0,\hu_X,\hp_0$ and $\hp_X$ that are missing in the $m$th boundary condition of \eqref{bch} are set equal to $0$.
We denote the perturbed IBVP (\ref{eq1h})--(\ref{bc4h}) by $\widehat{\mathcal{P}}_m$.
In the case $\beta=\gamma=0$ and $\beta_e=0$, the IBVP $\widehat{\mathcal{P}}_m$ is simply another version of $\mathcal{P}_m$.

\par We impose the following conditions on the data of problem $\widehat{\mathcal{P}}_m$. 
\par $(\hat{C}_1)$ Let $N^{-1}\leq\het^0\leq N$ on $\Om$, $\het^0,\hu^0,\beta_e\in L^\infty(\Om)$ and $\hth^0\in L^1(\Om)$.

\par $(\hat{C}_2)$ Let $\hat{g}$ and $\hat{f}$
be measurable on $\mathbb{R}\times Q$, satisfy there the bounds 
$|\hat{g}(\chi,x,t)|\leq\bar{g}(x,t)$ and 
$|\hat{f}(\chi,x,t)|\leq\bar{f}(x,t)$
with $\|\bar{g}\|_{L^1(Q)}\leq N$ and $\bar{f}\in L^1(Q)$,
and $\hat{g}$ and $\hat{f}$ be continuous in $\chi\in\mathbb{R}$ for almost all $(x,t)\in Q$.

\par Let also $\beta\in L^2(Q)$, $\g\in L^1(Q)$ and $\|\g\|_{[V_{2;3_*}(Q)]^*}<\infty$.

\par $(\hat{C}_3)$ Let
$\|\mathbf{\hu}_b\|_{H^1(0,T)}+\|\mathbf{\hp}_b\|_{W^{1,1}(0,T)}\leq N$, $\bm{\hpi}_b\in L^1(0,T)$.

\par Note that these conditions are less restrictive than the above conditions $(C_1)$--$(C_3)$ on the similar data except for the condition on $\mathbf{\hu}_b$.

\par Also we assume that the solution to  problem $\widehat{\mathcal{P}}_m$ has the properties $\het,1/\het,\hx_e\in L^\infty(Q)$,
$\hth,D_t\hat{\eta}, D_t\hx_e$ $\in L^2(Q)$, $\hu\in V_2(Q)\cap L^\infty(Q)$ and satisfies the bound
\[
 K_0^{-1}(N)\leq\het\leq K_0(N),\ \    \|\hu\|_{L^\infty(Q)}+\|D\hu\|_{L^2(Q)}
+\|\hth\|_{L^{\infty,1}(Q)}
+\|\hx_e\|_{L^\infty(Q)}\leq K_1(N).
\]
These assumptions are also less restrictive than the above listed properties of the weak solution to problem $\mathcal{P}_m$. 

\par Also the equations, initial and boundary conditions in the  IBVP $\widehat{\mathcal{P}}_m$ are valid in the similar sense as above for the basic IBVP $\mathcal{P}_m$; in particular, concerning the internal energy balance equation \eqref{eq3h}, with $\hpi$ given in \eqref{eq5h}, the initial condition $\hth|_{t=0}=\hth^0$ and the boundary conditions \eqref{bc4h}, we assume that the integral identity similar to \eqref{weaksolv3 for th} holds, for $\hth$ as $L^2(Q)$-solution of the corresponding $\mathcal{L}_3$-type problem.

\par We define the total  energy, the total initial energy and the modified one 
\[
\hat{e}=\tfrac12\hu^2+c_V\hth,\ \
\hat{e}^0=\tfrac12(\hu^0)^2+c_V\hth^0,\ \ \hat{E}^0:=\half(\hw^0)^2+c_V\hth^0
\]
with $\hw^0:=\hu^0-\hu_\G^0$ and
\be
 \hu_\G^0=\tfrac{1}{\hat{V}^0}[(I^*\hat{\eta}^0)\hu_0(0)+(I\hat{\eta}^0)\hu_X(0)\big]\,\ (m=1),\ \
 \hu_\G^0=\hu_X(0)\,\ (m=2),\ \
 \hu_\G^0=0\,\ (m=3), 
\label{hE0}
\ee
where $\hat{V}^0=\int_\Omega\hat{\eta}^0\,dx$.
The functions $e$, $e^0$, $E^0$, $w^0$ and $u_\G^0$ for problem $\mathcal{P}_m$ are defined similarly (by omitting all the hats).

\par The next theorem on the Lipschitz continuity on the data of the weak solution is the first main result of the paper.
recall that $q_e\in [2,\infty]$ was defined in condition $(C_2)$.
Let $M_1\subset M_0$ be the set of $(q,r)$ such that $q,r\in [2,\infty]$ and $(2q)^{-1}+r^{-1}\leq 1/2$.
\begin{theorem}
\label{th3}
Let the decompositions $\beta=\beta_1+\beta_2$ and
$\hg[\hx_e]-g[\hx_e]=g_1+g_2$ be valid with some $\beta_1,\beta_2,g_1,g_2\in L^1(Q)$ such that the norms of $\beta_1,\beta_2$ and $g_2$ are finite in the next bound.

\par Then the following bound for the difference of weak solutions to problems $\mathcal{P}_m$ and $\widehat{\mathcal{P}}_m$ holds
\begin{gather}
\|\hat{\eta}-\eta\|_{C(0,T;L^2(\Om))}
 +\|\hu-u\|_{L^2(Q)\cap L^\infty(0,T;H^{-1;m})}
 +\|\hat{\theta}-\theta\|_{L^2(Q)}
 +\|\hx_e-x_e\|_{L^{q_e,\infty}(Q)}
\nonumber\\[1mm]
+\|I_t(\hsi-\sigma)\|_{C(0,T;L^2(\Om))}
\leq K(N)\Big(\|\hat{\eta}^0-\eta^0\|_{L^2(\Om)}+
 \|\hu^0-u^0\|_{H^{-1;m}}
 +\|\hat{E}^0-E^0\|_{H^{-1;3}}
\nonumber\\[1mm]
+\|\beta_e\|_{L^{q_e}(\Om)}
+\|\mathbf{\hu}_b-\mathbf{u}_b\|_{W^{1,1}(0,T)}
+\|\mathbf{\hp}_b-\mathbf{p}_b\|_{L^1(0,T)}
+\|\bm{\hpi}_b-\bm{\pi}_b\|_{L^1(0,T)}
\nonumber\\[1mm]
 +\inf_{M_0}\|\beta_1\|_{L^{q,r}(Q)}
 +\inf_{M_1} 
 \|I^{\lan 3\ran}\beta_2\|_{L^{q,r}(Q)}
+\|I_t I^{\lan 1\ran}\beta_2\|_{L^{q_e,\infty}(Q)}
+\|\lan\beta_2\ran_\Omega\|_{L^1(0,T)}
+\|\g\|_{[V_{2;3_*}(Q)]^*}
\nonumber\\
 +\|g_1\|_{L^1(Q)}
 +\|I^{\lan m\ran}g_2\|_{L^2(Q)}+\de_{m3}\|\lan g_2\ran_\Omega\|_{L^1(0,T)}
 +\|\hf[\hx_e]-f[\hx_e]\|_{[H^{2,1;\hrho,m}(Q)]^*}\Big).
\label{hzz}
\end{gather}
\end{theorem}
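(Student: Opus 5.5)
We set $z=\het-\eta$, $v=\hu-u$, $\vartheta=\hth-\th$, $\xi=\hx_e-x_e$, and let $\Phi$ denote the right-hand side of \eqref{hzz} without $K(N)$. The proof is a coupled linearization in five steps. Each difference is treated as the solution of a linear parabolic IBVP of type $\mathcal{L}_m$, whose coefficients are taken from $\mathcal{P}_m$ or $\widehat{\mathcal{P}}_m$ and whose free terms are small quantities. Propositions \ref{th1} and \ref{th2} then give weak-norm bounds for these differences, and Gronwall--Bellman together with Lemma \ref{lem1} closes the argument.

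\emph{Step 1 (stress and volume).} Integrating \eqref{eq1}, \eqref{eq1h} and the momentum equations in $t$ gives the identity $I^{\lan m\ran}(\hu-u)=D I_t(\ldots)$-type relations. The key object is the integrated stress $I_t(\hsi-\s)$. Because $\nu\rho Du=\nu D_t\ln\eta$, we have
\[
I_t(\hsi-\s)=\nu\ln(\het/\eta)-\nu\ln(\het^0/\eta^0)-I_t(\hp-p)+\nu I_t(\hrho\beta_2)+\ldots .
\]
From the momentum equations, $I_t(\hsi-\s)$ also equals $I^{\lan m\ran}v-I^{\lan m\ran}(\hu^0-u^0)-I_tI^{\lan m\ran}(\hg[\hx_e]-g[x_e])$ up to boundary terms. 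For $m=1,3$ there is an additional space-independent function of $t$, which is fixed by \eqref{meta} or by the boundary pressures. This yields a pointwise-in-$t$ relation between $\ln(\het/\eta)$, $v$ in $H^{-1;m}$, and $I_t(\hp-p)$. Using $K_0^{-1}\leq\eta,\het\leq K_0$, we obtain
\[
\|z(t)\|_{L^2}\leq K\big(\|v(t)\|_{H^{-1;m}}+\|\vartheta\|_{L^{2,1}(Q_t)}+\|z\|_{L^{2,1}(Q_t)}+\|\xi\|_{L^{q_e,1}(Q_t)}+\Phi\big).
\]
In this estimate the $g$-difference is split as $\hg[\hx_e]-g[\hx_e]=g_1+g_2$ and $g[\hx_e]-g[x_e]$. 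The latter is bounded by \eqref{ggff} and $\|\xi\|_{L^{q_e}}$, and this is exactly why the $L^{q_e',1}$ condition and the $L^{q_e,\infty}$ norm of $\xi$ appear. For $\xi$ we use $\xi=\beta_e+I_t v$, together with the fact that $I_tv$ is expressed through $z$ and $\beta$ via $D_t\xi$. More precisely, $D\xi-z$ is controlled by $\beta_e$ and $I_tI^{\lan1\ran}\beta_2$, so $\|\xi(t)\|_{L^{q_e}}\leq K(\|z\|_{L^{2,1}(Q_t)}+\ldots)$.

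\emph{Step 2 (velocity).} The velocity difference $v$ solves $\mathcal{L}_m$ with $\vk=\nu\hrho$ (so $D_t\vk\in L^2$ by $D_t\het\in L^2$). Here $\psi=\nu(\hrho-\rho)Du-(\hp-p)+\nu\hrho\beta$ and $F=\hg[\hx_e]-g[x_e]$. Proposition \ref{th1} bounds $\|v\|_{L^2(Q_t)\cap L^\infty(0,t;H^{-1;m})}$ by the $[V_2]^*$ norms of $\psi$ and $I^{\lan m\ran}F$. The dangerous term $(\hrho-\rho)Du$ is only $L^{2,\infty}\cdot L^2$, i.e. $L^{1,2}$, and its $[V_2]^*$ norm is controlled by $\|z\|_{L^{2,\infty}}\|Du\|_{L^2}$ through \eqref{ineq3}, which is admissible because $(1,2)$ lies in $M_0$. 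It is \emph{localized in time}: we bound it by $\|z\|_{L^{2,\infty}(Q_t)}\|Du\|_{L^2(Q_t)}$.

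\emph{Step 3 (temperature).} We write the $\vartheta$-equation in terms of energy rather than internal energy, so as to avoid the $L^1$ term $\s Du$. Since $c_VD_t\th+D_t(u^2/2)=D(\pi+\s u)+gu+f$, the difference $c_V\vartheta+\half(\hu^2-u^2)$ solves a generalized $\mathcal{L}_3$ problem with $\vk=\la\hrho/c_V$ and $G=-\half(\hu+u)v$. The initial data are handled by $\hat{E}^0-E^0$ after subtracting the boundary lift $\hu_\G^0$, which explains \eqref{hE0}. Flux and source differences enter through $\psi$ (namely $\la(\hrho-\rho)D\th$, $\la\hrho\g$ and $\hsi\hu-\s u$) and through $F$. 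Proposition \ref{th2} gives $\|\vartheta\|_{L^2(Q_t)}$ in terms of $\|G\|_{L^2}\leq K\|v\|_{L^2(Q_t)}$ (using \eqref{Linf for u} and the $L^\infty$ bound on $\hu$), $\|z\|_{L^{2,\infty}(Q_t)}$, $\|\hsi-\s\|$ in the $[V_2]^*$ sense, and the $f$-difference in $[H^{2,1;\hrho,3}]^*$. The term $\hsi\hu-\s u=(\hsi-\s)\hu+\s v$ is the hardest.

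\emph{Step 4 (closing) and main obstacle.} We collect the inequalities into $y(t)=\|z(t)\|_{L^2}+\|v\|_{L^\infty(0,t;H^{-1})}$ plus $L^2(Q_t)$ norms. This gives an inequality of the form $\|y\|_{L^\infty(0,t)}\leq K\|y\|_{L^{2}(0,t)}+K\Phi$, and Lemma \ref{lem1} (or Gronwall) then yields \eqref{hzz}; the bound on $I_t(\hsi-\s)$ follows from Step 1. The main obstacle is the flux-type terms $\s v$ and $(\hrho-\rho)D\th$, which involve $Du,D\th$ only in $L^2$. The terms $(\hsi-\s)\hu$ are reduced by $D_tI_t$, i.e. integrating by parts in $t$ to use $I_t(\hsi-\s)$. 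For the flux terms, the $[V_2]^*$ and $[H^{2,1}]^*$ dualities must be arranged so that every small factor appears in a norm that is integrable in $t$ with a weight in $L^1(0,t)$, which is what is needed to apply Lemma \ref{lem1}. I would first try to test them against $D(\vk Dy)$ after integrating by parts, exploiting $\sigma=\nu D_t\ln\eta-p$.
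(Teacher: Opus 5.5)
Your skeleton follows the paper's five steps: the logarithmic formula for $\het$ with the mean-volume device for $m=1$, Proposition \ref{th1} for $\hu-u$, Proposition \ref{th2} for an energy, then $\hx_e-x_e$ and $I_t(\hsi-\s)$. But Step 3, which is the core of the theorem, does not work as written, for two reasons.

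First, the total energy flux $\hsi\hu+\hpi$ is not prescribed at $x=0,X$ for any $m$. For $m=1$ the boundary stress is unknown. For $m=3$ the boundary velocity is unknown: the term $p_0(\hu-u)|_{x=0}$ is not controlled in $L^1(0,T)$ by $\|\hu-u\|_{L^2(Q)}$. For $m=2$ you have one of each. So your energy difference is not an $\mathcal{L}_3$-problem with flux data $\mathbf{s}_b\in L^1(0,T)$, and subtracting $\hu_\G^0$ only at $t=0$ does not help. The paper lifts the velocity for all $t$. It sets $\hw=\hu-\hu_\G$ (and $w=u-u_\G$) with $\hrho D\hu_\G$ independent of $x$, and uses the flux $S_{\hw}=\hsi-\nu\hrho D\hu_\G-\hsi_\G$, so that $S_{\hw}\hw=0$ on the boundary; the unknown is $\hat{E}=\half\hw^2+c_V\hth$. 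The price is extra sources: $D_t\hu_\G$, which for $m=1$ involves $\hw+IP_{\het}\beta$, and $\hsi_\G D\hu-\s_\G Du$. The bad piece $\hsi_\G D(\hu-u)=\hsi_\G\big(D_t(\het-\eta)-\beta\big)$ goes into the $D_tG$ slot of Proposition \ref{th2}, with $G=\hsi_\G(\het-\eta)-\hsi_\G|_{t=0}(\het^0-\eta^0)$.

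Second, and more seriously, the splitting $\hsi\hu-\s u=(\hsi-\s)\hu+\s v$ cannot be closed. The term $(\hsi-\s)\hu$ contains $\nu\hrho\hu Dv$, and no norm of $Dv$ is small. The term $\s v$ lies only in $L^1(Q_t)$, which is not in $[V_2(Q)]^*$ since $(1,1)\notin M_0$. Integrating by parts in $t$ through $I_t(\hsi-\s)$ would require $D_t\hu$ and $D_tDy$, available neither for the weak solution nor for $y\in H^{2,1;\vk,3}(Q)$.

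The missing idea is that the viscous part of the energy flux is an exact derivative, $\nu\hrho\hw D\hw=\nu\hrho\,\half D(\hw^2)$. Hence $S_{\hw}\hw+\hpi=\frac{\la}{c_V}\hrho D\hat{E}+\hat{\Psi}$ with $\hat{\Psi}=(\nu-\frac{\la}{c_V})\hrho\,\half D(\hw^2)-(\hp-\nu\hrho\beta+\hsi_\G)\hw+\la\hrho\g$. In the $\het$-weighted dual norm of Proposition \ref{th2} (here $1/\vk=\frac{c_V}{\la}\het$), the identity $\het\hrho=1$ turns the dangerous part of the flux difference into $(\nu-\frac{\la}{c_V})\half D(\hw^2-w^2)$. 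Its $[V_{2;3_*}(Q_t)]^*$-norm is at most $\half|\nu-\frac{\la}{c_V}|\,\|\hw^2-w^2\|_{L^2(Q_t)}\leq K\|\hw-w\|_{L^2(Q_t)}$, by the $L^\infty$ bounds on $u$ and $\hu$. Everything else is one of $(\het-\eta)[(\nu\rho Dw-p)w+\pi]$, $k(\hth-\th)w$, or $(k\hth+\het\hsi_\G)(\hw-w)$. In particular $\het\la(\hrho-\rho)D\th=-(\het-\eta)\pi$, so the term you flagged as an obstacle is harmless.

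Two smaller points. In Step 2 the product $\|\het-\eta\|_{L^{2,\infty}(Q_t)}\|Du\|_{L^2(Q_t)}$ does not close, because $\|Du\|_{L^2(Q_t)}$ is not uniformly small. The paper instead uses $(1,4/3)\in M_0$ to get $\|\s\|_{L^2(Q)}\|\het-\eta\|_{L^{2,4}(Q_t)}$ and then removes it with Lemma \ref{lem1}. Also, the term $\nu I_t(\hrho\beta_2)$ in your Step 1 is spurious: $\beta$ is built into $\hsi$, so $\nu\ln\het=\nu\ln\het^0+I_t\hsi+I_t\hp$ exactly.
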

\begin{rem}
\label{rem1}
1. We have $\hat{E}^0-E^0=\hat{e}^0-e^0$ in the case $m=3$.
The term $\hat{E}^0-E^0$ can be simplified down to $\hat{e}^0-e^0$  in the cases $m=1,2$ as well provided that the norm $\|\hu^0-u^0\|_{H^{-1;m}}$ is coarsened as $\|\hu^0-u^0\|_{H^{-1;3}}$.

\smallskip\par 2. The term $\|I_t I^{\lan 1\ran}\beta_2\|_{L^{q_e,\infty}(Q)}$ can be omitted for $q_e=2$ or restricting the values $q\in [q_e,\infty]$ in the definition of $M_1$.

\smallskip\par 3. If the additional bound $\|D\hu\|_{L^{2,\infty}(Q)}\leq \hat{K}(N)$ is valid, then the norms of $I^{\lan 3\ran}\beta_2$ and $I^{\lan m\ran}g_2$ can be weakened down to
$\|I^{\lan 3\ran}\beta_2\|_{L^2(Q)}$ and $\|I^{\lan m\ran}g_2\|_{L^{2,1}(Q)}$.
\end{rem}
\begin{proof}
The proof comprises five steps.
At each step, we sequentially derive bounds for the terms on the left-hand side of \eqref{hzz} by different special means.

\smallskip\par 1. We first bound $\hat{\eta}-\eta$.
To this end, we derive more suitable equations for $\hat{\eta}$ and $\eta$. We multiply equation $D_t\het=D\hu+\beta$ by $\nu\hrho$ and rewrite  it as
$\nu D_t\ln\hat{\eta}=\hsi+\hp$.
Integrating in $t$ leads to the formula
\be
\nu\ln\hat{\eta}=\nu\ln\hat{\eta}^0+I_t\hsi+I_t\hp.
\label{lne}
\ee
\par We consider the $\mathcal{L}_m$-type IBVP for $\hu$ formed by equation \eqref{eq2h}, the initial condition $\hu|_{t=0}=\hu^0$ and the boundary conditions \eqref{bch}. 
Due to \cite[Lemma 2.1]{AZMN94}, there exists $DI_t\hsi\in L^{1,\infty}(Q)$, and the following equalities with $I_t\hsi$ hold
\begin{gather}
\hu=\hu^0+DI_t\hsi+I_t\hg[\hx_e],
\label{dits}\\[1mm]
I_t\hsi|_{x=0}=-I_t\hp_0\ \ (m=2,3),\ \
 I_t\hsi|_{x=X}=-I_t\hp_X\ \ (m=3).
\label{itsg}
\end{gather}
We apply the operator $I^{\lan m\ran}$ to equality (\ref{dits}) and owing to formulas (\ref{idp}) and (\ref{itsg}) get the representations
\begin{gather}
I_t\hsi=I^{\lan 1\ran}(\hu-\hu^0-I_t\hg[\hx_e])+I_t\lan\hsi\ran_\Omega\ \ (m=1),
\label{its1}\\[1mm]
I_t\hsi=I^{\lan m\ran}(\hu-\hu^0-I_t\hg[\hx_e])+I_t\hsi_\G\ \ (m=2,3)
\label{its2}
\end{gather}
(see them also in \cite[Lemma 3.1]{ZASMJ97}), where
\be
 \hsi_\G=0\ \ (m=1),\ \
 \hsi_\G=-\hp_0\ \ (m=2),\ \
 \hsi_\G=-(1-\tfrac{x}{X})\hp_0-\tfrac{x}{X}\hp_X\ \ (m=3).
\label{si G}
\ee
The function $\sigma_\G$ for problem $\mathcal{P}_m$ is defined similarly.
We insert formula \eqref{its2} into (\ref{lne}) and get the equation
\be
 \nu\ln\hat{\eta}=I^{\lan m\ran}\hu+I_t\hp
 +\nu\ln\hat{\eta}^0-I^{\lan m\ran}\hu^0
 +I_t\hsi_\G-I^{\lan m\ran}I_t\hg[\hx_e]\ \ (m=2,3).
\label{lne23}
\ee
\par But, in the case $m=1$, the corresponding equation does not lead to the desired result, and
we are forced to proceed in a more complicated manner.
We subtract from equality (\ref{lne}) the same equality taken at $x=\zeta\in\Om$ and
divide the difference by $\nu$.
Applying the function $\exp$ to the result, we find
\[
\hat{\eta}\hat{\eta}^0(\zeta)=\hat{\eta}|_{x=\zeta}\hat{\eta}^0
 \exp\big[\tfrac{1}{\nu}\big(I_t\hsi-I_t\hsi|_{x=\zeta}+I_t\hp-I_t\hp|_{x=\zeta}\big)\big].
\]
Expressing the difference of values of $I_t\hsi$ by virtue of formula (\ref{its1}), we obtain
\[
\hat{\eta}\hat{\eta}^0(\zeta)e^{\hA|_{x=\zeta}}
=\hat{\eta}|_{x=\zeta}\hat{\eta}^0e^{\hA},\ \ \text{with}\ \
 \hA:=\tfrac{1}{\nu}\big[I_t\hp+I^{\lan 1\ran}(\hu-\hu^0-I_t\hg[\hx_e])\big].
\]
Finally, taking the mean value in $\zeta$ of the both parts, we derive the more suitable equation 
\be
 \hat{\eta}\lan\hat{\eta}^0e^{\hA}\ran_\Om
 =\lan\hat{\eta}\ran_\Omega\hat{\eta}^0e^{\hA}
\label{lne1}
\ee
(see it also in \cite[Lemma 1]{ZAMN98}).
In addition, applying the operator $I_t\lan\cdot\ran_\Om$ to the equation $D_t\het=D\hu+\beta$ and using the boundary conditions (\ref{bch}) for $m=1$, we find the formula for the mean gas volume
\be
 \lan\hat{\eta}\ran_\Om=\lan\hat{\eta}^0\ran_\Om
 +X^{-1}I_t(\hu_X-\hu_0)+I_t\lan\beta\ran_\Omega\ \ (m=1)
\label{mheta}
\ee
similar to formula (\ref{meta}) for problem $\mathcal{P}_1$.

\par For $m=1,2,3$, we are ready to prove the following bound
\begin{gather}
\|\hat{\eta}-\eta\|_{C(0,t;L^2(\Om))}
 \leq  K^{(0)}(N,T)\big(\|\hu-u\|_{L^\infty(0,t;H^{-1;m})}
 +\|\hth-\theta\|_{L^{2,1}(Q_t)}
\nonumber\\[1mm]
+\|\hat{\eta}^0-\eta^0\|_{L^2(\Om)}
+\|\hu^0-u^0\|_{H^{-1;m}}
+\de_{m1}\|I_t(\mathbf{\hu}_b-\mathbf{u}_b)\|_{L^\infty(0,T)}
+\|I_t(\mathbf{\hp}_b-\mathbf{p}_b)\|_{L^\infty(0,T)}
\nonumber\\[1mm]
+\de_{m1}\|I_t\lan\beta\ran_\Omega\|_{L^\infty(0,T)}
+\|I^{\lan m\ran}I_t(\hg[\hx_e]-g[x_e])\|_{L^{2,\infty}(Q_t)}
\big),\ \ 0<t\leq T.
\label{hee1}
\end{gather}

\par In the cases $m=2,3$, we subtract from equation (\ref{lne23}) the similar one for problem $\mathcal{P}_m$, and due to the two-sided uniform bounds $K_0^{-1}\leq\eta\leq K_0$ and $K_0^{-1}\leq\hat{\eta}\leq K_0$ we get
\begin{gather}
\nu K_0^{-1}|\hat{\eta}-\eta|\leq |I^{\lan m\ran}(\hu-u)|+I_t|\hp-p|
 +\nu K_0|\hat{\eta}^0-\eta^0|+|I^{\lan m\ran}(\hu^0-u^0)|
\nonumber\\[1mm]
+|I_t(\hsi_\G-\sigma_\G)|+|I^{\lan m\ran}I_t(\hg[\hx_e]-g[x_e])|.
\label{hee2}
\end{gather}
Clearly
\be
 \hp-p=k(\hrho-\rho)\theta+k\hrho(\hat{\theta}-\theta)
 =-k\hrho\rho(\hat{\eta}-\eta)\theta+k\hrho(\hat{\theta}-\theta).
\label{hpp}
\ee
Therefore, taking the $L^2(\Om)$--norm of the both sides of inequality (\ref{hee2}), we obtain
\begin{gather}
\hspace{-8pt}\nu K_0^{-1}\|\hat{\eta}-\eta\|_{L^2(\Om)}
 \leq kK_0^2I_t\big(\|\theta\|_{L^\infty(\Om)}\|\hat{\eta}-\eta\|_{L^2(\Om)}\big)+
 \|\hu-u\|_{H^{-1;m}}
 +kK_0I_t\|\hth-\theta\|_{L^2(\Om)}
\nonumber\\[1mm]
+\nu K_0\|\hat{\eta}^0-\eta^0\|_{L^2(\Om)}+\|\hu^0-u^0\|_{H^{-1;m}}
 + \|I_t(\hsi_\G-\sigma_\G)\|_{L^2(\Om)}
 +\|I^{\lan m\ran}I_t(\hg[\hx_e]-g[x_e])\|_{L^2(\Om)}
\label{hee3}
\end{gather}
on $(0,T)$.
Applying the Gronwall--Bellman lemma and the bound $\|\theta\|_{L^{\infty,1}(Q)}\leq K$,
we prove bound (\ref{hee1}).

\par In the case $m=1$, we subtract from equation (\ref{lne1}) the similar one for problem $\mathcal{P}_m$ and get
\[
 (\hat{\eta}-\eta)\lan\eta^0e^A\ran_\Om
 =-\hat{\eta}\lan\hat{\eta}^0e^{\hA}-\eta^0e^A\ran_\Om
 +\lan\hat{\eta}\ran_\Omega\big(\hat{\eta}^0e^{\hA}-\eta^0e^A\big)
 +\big(\lan\hat{\eta}\ran_\Om-\lan\eta\ran_\Omega\big)\eta^0e^A.
\]
Obviously also
$\hat{\eta}^0e^{\hA}-\eta^0e^A
=\hat{\eta}^0\big(e^{\hA}-e^A\big)+(\hat{\eta}^0-\eta^0)e^A$.
We apply the two-sided uniform bounds $K_0^{-1}\leq\eta\leq K_0$ and $K_0^{-1}\leq\hat{\eta}\leq K_0$, the bound
\[
 |A|\leq\tfrac{1}{\nu}\big(kK_0\|\theta\|_{L^{\infty,1}(Q)}
 +\|u\|_{L^{1,\infty}(Q)}+\|u^0\|_{L^1(\Om)}+\|\bar{g}\|_{L^1(Q)}\big)
\leq K
\]
and the similar bound $|\hA|\leq K$. 
Then we get
\[
 |\hat{\eta}-\eta|
 \leq K\big(\|\hA-A\|_{L^1(\Om)}+\|\hat{\eta}^0-\eta^0\|_{L^1(\Om)}
 +|\hA-A|+|\hat{\eta}^0-\eta^0|+|\lan\hat{\eta}\ran_\Om-\lan\eta\ran_\Om|\big).
\]
Taking the $L^2(\Om)$--norm of the both sides, we derive
\[
 \|\hat{\eta}-\eta\|_{L^2(\Om)}
 \leq K(\|\hA-A\|_{L^2(\Om)}+\|\hat{\eta}^0-\eta^0\|_{L^2(\Om)}
 +|\lan\hat{\eta}\ran_\Om-\lan\eta\ran_\Om|).
\]
Clearly further we have
\begin{gather*}
|\hA-A|\leq\tfrac{1}{\nu}\big(I_t|\hp-p|
 +|I^{\lan 1\ran}(\hu-u)|+|I^{\lan 1\ran}(\hu^0-u^0)|
 +|I^{\lan 1\ran}I_t(\hg[\hx_e]-g[x_e])|\big),
\\[1mm]
|\lan\hat{\eta}\ran_\Om-\lan\eta\ran_\Om|
 \leq X^{-1}\|\hat{\eta}^0-\eta^0\|_{L^1(\Om)}
 +X^{-1}|I_t[\hu_0-u_0-(\hu_X-u_X)]|+|I_t\lan\beta\ran_\Om|,
\end{gather*}
see formulas (\ref{meta}) and (\ref{mheta}).
Therefore, similarly to (\ref{hee3}) we obtain
\begin{gather*}
\|\hat{\eta}-\eta\|_{L^2(\Om)}
 \leq K\big[I_t\big(\|\theta\|_{L^\infty(\Om)}\|\hat{\eta}-\eta\|_{L^2(\Om)}\big)+
 \|\hu-u\|_{H^{-1;1}}
 +I_t\|\hth-\theta\|_{L^2(\Om)}
 +\|\hat{\eta}^0-\eta^0\|_{L^2(\Om)}
\\[1mm]
 +\|\hu^0-u^0\|_{H^{-1;m}}
 +|I_t[\hu_0-u_0-(\hu_X-u_X)]|+|I_t\lan\beta\ran_\Om|
 +\|I^{\lan 1\ran}I_t(\hg[\hx_e]-g[x_e])\|_{L^2(\Om)}\big]
\end{gather*}
on $(0,T)$.
Clearly this inequality implies
bound (\ref{hee1}) for $m=1$ as well.
 
\smallskip\par 2. Next we bound $v=\hu-u$.
This function is the solution from $V_2(Q)$ to the $\mathcal{L}_m$--type IBVP with $\vk=\nu\hrho$ for the equation
\[
 D_tv=Ds+\hg[\hx_e]-g[x_e],\ \ s:=\hsi-\sigma=\nu\hrho Dv+\psi
\]
in $Q$, with the initial condition $v|_{t=0}=\hu^0-u^0$ and the boundary conditions
\begin{gather*}
v|_{x=0,X}=\mathbf{\hu}_b-\mathbf{u}_b\ (m=1), \
s|_{x=0}=-(\hp_0-p_0),\ v|_{x=X}=\hu_X-u_X\ (m=2),
\\
s|_{x=0,X}=-(\mathbf{\hp}_b-\mathbf{p}_b)\ (m=3).
\end{gather*}
We derive the following simple formula with $\psi$:
\[
\hat{\eta}\psi=\hat{\eta}[\hsi-\sigma-\nu\hrho D(\hu-u)]=\hat{\eta}\hsi-\eta\sigma -(\hat{\eta}-\eta)\sigma-\nu D(\hu-u)
=\nu\beta-k(\hat{\theta}-\theta)-(\hat{\eta}-\eta)\sigma.
\]
Applying Proposition \ref{th1} to this problem in $Q_t$, we get the bound
\begin{gather}
\|\hu-u\|_{L^2(Q_t)\cap L^\infty(0,t;H^{-1;m})}
 \leq K(N,T)\big(\|\hu^0-u^0\|_{H^{-1;m}}
\nonumber\\[1mm]
+\|\mathbf{\hu}_b-\mathbf{u}_b\|_{L^{4/3}(0,T)}
+\|\mathbf{\hp}_b-\mathbf{p}_b\|_{L^1(0,T)}
+\|\hat{\eta}\psi\|_{[V_{2;\hrho,m_*}(Q_t)]^*}
\nonumber\\[1mm]
+\|\hat{\eta} I^{\lan m\ran}(\hg[\hx_e]-g[x_e])\|_{[V_{2;\hrho,m_*}(Q_t)]^*}
+\de_{m3}I_t|\lan \hg[\hx_e]-g[x_e]\ran_\Om|
\big),\ \ 0<t\leq T.
\label{huu1}
\end{gather}
Due to inequality (\ref{ineq3}) we find
\begin{gather}
\|\hat{\eta}\psi\|_{[V_{2;\hrho,m_*}(Q_t)]^*}
\leq C\big(\|(\hat{\eta}-\eta)\sigma\|_{L^{1,4/3}(Q_t)}
 +k\|\hat{\theta}-\theta\|_{L^{2,1}(Q_t)}
 +\nu\|\beta\|_{[V_{2;\hrho,m_*}(Q_t)]^*}\big)
\nonumber\\[1mm]
\leq K_1(\|\hat{\eta}-\eta\|_{L^{2,4}(Q_t)}+\|\hat{\theta}-\theta\|_{L^{2,1}(Q_t)}
+\|\beta\|_{[V_{2;\hrho,m_*}(Q_t)]^*}\big)
\label{psi1}
\end{gather}
since $\|(\hat{\eta}-\eta)\sigma\|_{L^{1,4/3}(Q_t)}
\leq\|\sigma\|_{L^2(Q)}\|\hat{\eta}-\eta\|_{L^{2,4}(Q_t)}$ and $\|\sigma\|_{L^2(Q)}\leq K$.
Also, for $z\in L^1(Q)$, using inequality \eqref{ineq3} once again, we find
\be
 \|I^{\lan m\ran}I_tz\|_{L^{2,\infty}(Q_t)}+\|\hat{\eta} I^{\lan m\ran}z\|_{[V_{2;\hrho,m_*}(Q_t)]^*}
 \leq (1+CK_0)\|I^{\lan m\ran}z\|_{L^{2,1}(Q_t)}.
\label{imitw}
\ee
\par We apply bound \eqref{psi1} in (\ref{huu1}).
Then we multiply the result by $2K^{(0)}$, add it to bound (\ref{hee1}) and, applying bound \eqref{imitw} for $z=\hg[\hx_e]-g[x_e]$ and coarsening some data norms, we derive
\begin{gather}
\|\hat{\eta}-\eta\|_{C(0,t;L^2(\Om))}
 +\|\hu-u\|_{L^2(Q_t)\cap L^\infty(0,t;H^{-1;m})}
 \leq K^{(1)}(N,T)\big(\|\hat{\eta}-\eta\|_{L^{2,4}(Q_t)}
 +\|\hat{\theta}-\theta\|_{L^{2,1}(Q_t)}
\nonumber\\[1mm]
+\|\hat{\eta}^0-\eta^0\|_{L^2(\Om)}
 +\|\hu^0-u^0\|_{H^{-1;m}}
 +\|\mathbf{\hu}_b-\mathbf{u}_b\|_{L^{4/3}(0,T)}+\|\mathbf{\hp}_b-\mathbf{p}_b\|_{L^1(0,T)}
+\|\beta\|_{[V_{2;\hrho,m_*}(Q)]^*}
\nonumber\\[1mm]
+\de_{m1}\|I_t\lan\beta\ran_\Omega\|_{L^\infty(0,T)}
 +\|I^{\lan m\ran}(\hg[\hx_e]-g[x_e])\|_{L^{2,1}(Q_t)}
 +\de_{m3}I_t|\lan \hg[\hx_e]-g[x_e]\ran_\Om|\big),\ 0<t\leq T.
\label{huu2}
\end{gather}
The presence of the space $L^\infty(0,t;H^{-1;m})$ on the left in bound \eqref{huu1} has been essential here to remove the term $\hu-u$ from the right-hand side of \eqref{hee1}.
\par Owing to Lemma \ref{lem1} (applied on $(0,t)$ in the role of $(0,T)$) this bound implies the improved one without the term $\|\hat{\eta}-\eta\|_{L^{2,4}(Q_t)}$ on the right; we do not write it down for brevity.

\smallskip\par 3. It seems impossible to bound $\|\hat{\theta}-\theta\|_{L^2(Q_t)}$ properly by virtue of the internal energy balance equations (\ref{eq3}) and (\ref{eq3h}) because of the terms $\s Du$ and $\hsi D\hu$. Thus we first turn to a bound for $\hat{e}-e$.
But under the inhomogeneous boundary conditions \eqref{bc} and \eqref{bch} in the cases  $m=1,2$ one cannot pose boundary conditions for this function.
Therefore, we need to remove these inhomogeneities.
This step is the most cumbersome in the proof (and completely new compared to the barotropic case \cite{AZMN94}).
\par We define the function $\hw:=\hu-\hu_\G$ with
\[
 \hu_\G=:\tfrac{1}{\hV}[(I^*\het)\hu_0+(I\het)\hu_X]=\hu_0+\tfrac{1}{\hV}(I\het)(\hu_X-\hu_0)\ (m=1),\,\
 \hu_\G=\hu_X\ (m=2),\,\
 \hu_\G=0\ (m=3).
\]
The functions $u_\G$ and $w:=u-u_\G$ for problem $\mathcal{P}_m$ are defined similarly.
In the case $m=1$, below we need a short formula for $D_t\hu_\G$.  Using the equation
$D_t\het=D\hu+\beta$, we derive
\begin{gather}
D_t\hu_\G=(D_t\hu)_\G+\tfrac{1}{\hV}(\hu-\hu_0+I\beta)\hu_{X0}
-\tfrac{1}{\hV^2}\Big(\hu_{X0}+\int_\Om\beta\,dx\Big)(I\het)\hu_{X0}
\nonumber\\[1mm]
=(D_t\hu)_\G+\tfrac{1}{\hV}\big(\hu-\hu_\G+I\beta-\tfrac{X}{\hat{V}}(I\het)\lan\beta\ran_\Om\big)\hu_{X0}
 =(D_t\hu)_\G+\tfrac{1}{\hV}(\hw+IP_{\het}\beta)\hu_{X0}\ \ (m=1),
\label{dthuG}
\end{gather}
where $(D_t\hu)_\G:=D_t\hu_0+\tfrac{1}{\hV}(I\het)D_t(\hu_X-\hu_0)$ and $\hu_{X0}:=\hu_X-\hu_0$;
recall also definition \eqref{pvk} of $P_{\hat{\eta}}$.
We also set $(D_t\hu)_\G:=D_t\hu_X$ ($m=2$) and $(D_t\hu)_\G:=0$ ($m=3$).
The functions $(D_tu)_\G$ and $u_{X0}$ for problem $\mathcal{P}_m$ are defined similarly.
 
\par We begin with rewriting equation $D_t\hu=D\hsi+\hat{g}[\hx_e]$ with respect to $\hw$ in the form
\be
D_t\hw=DS_{\hw}+F_{\hw}\ \ \text{in}\ \ Q,
\label{eq2hw}
\ee
with the flux and the free term
\be
S_{\hw}:=\hsi-\nu\hrho D\hu_\G-\hsi_\G=\nu\hrho D\hw-\hp+\nu\hrho\beta-\hsi_\G,\ \ 
F_{\hw}:=\hat{g}[\hx_e]-D_t\hu_\G+D\hsi_\G,
\label{SwFw}
\ee
with $\hsi_\G$ defined in \eqref{si G}, where the independence of $\hrho D\hu_\G$ on $x$ has been essential:
\be
 \hrho D\hu_\G=\tfrac{1}{\hat{V}}(\hu_X-\hu_0)\ \ (m=1),\ \
 D\hu_\G=0\ \ (m=2,3).
\label{DhuG}
\ee
Now the boundary conditions \eqref{bch} can be transformed to the following homogeneous form
\begin{gather}
\hw|_{x=0,X}=0\ \ (m=1);\ \
S_{\hw}|_{x=0}=0,\,\hw|_{x=X}=0\,\ (m=2);\ \
S_{\hw}|_{x=0,X}=0\ \ (m=3).
\label{bc3hw}
\end{gather}
More precisely, we mean that $\hw$ is the weak solution from $V_2(Q)$ to the ``linear'' parabolic IBVP of $\mathcal{L}_m$--type formed by equation \eqref{eq2hw}, the initial condition $\hw|_{t=0}=\hw^0:=\hu^0-\hu_\G^0$ (recall formulas \eqref{hE0} for $\hu_\G^0$)
and the boundary conditions \eqref{bc3hw}.
Note that also $\|\hw\|_{L^\infty(Q)}\leq K$.

\par For $\hw$, the additional integral identity holds (which is crucial for us)
\begin{gather*}
 \int_Q\big[-\half\hw^2D_ty+S_{\hw}\big((D\hw)y+\hw Dy\big)\big]\,dxdt
=\int_\Omega\half(\hw^0)^2y|_{t=0}\,dx+\int_QF_{\hw}\hw y\,dxdt
\end{gather*}
for any $y\in H^1(Q)\cap L^\infty(Q)$ with $y|_{t=T}=0$, for example, see \cite[Proposition 2.3]{AZMB96} or \cite[Lemma~5.4]{AZRM97}.
Formally it appears after multiplying equation \eqref{eq2hw} by $wy$ and integrating by parts in $x$ and $t$ and using the boundary conditions \eqref{bc3hw}. 
Thus $\hw$ is the weak solution to the IBVP
\begin{gather}
D_t(\half\hw^2)=D(S_{\hw}\hw)-S_{\hw}D\hw+F_{\hw}\hw\ \ \text{in}\ \ Q,\ \
\half\hw^2|_{t=0}=\half(\hw^0)^2,\ \ (S_{\hw}\hw)|_{x=0,X}=0. 
\label{ibvp wh2}
\end{gather}
From both formulas \eqref{SwFw} for $S_{\hw}$ we derive the auxiliary formula
\begin{gather}
S_{\hw}D\hw=(\hsi-\nu\hrho D\hu_\G-\hsi_\G)D\hu-\het S_{\hw}\hrho D\hu_\G
=(\hsi-\hsi_\G)D\hu-(\nu D\hu+\het S_{\hw})\hrho D\hu_\G
\nonumber\\
=\hsi D\hu-\big(2\nu D\hw-k\hth+\nu\beta+\nu D\hu_\G\big)\hrho D\hu_\G-\hsi_\G D\hu,
\label{Sw-sigDu}
\end{gather}
where we have used that $\hsi_\G D\hu_\G=0$ for all $m=1,2,3$.

\par Adding to the relations of the IBVP \eqref{ibvp wh2} for $\hw$ the corresponding relations for $\hth$: 
\[
c_VD_t\hth=D\hpi+\hsi D\hu+\hat{f}[\hx_e]\ \ \text{in}\ \ Q,\ \ c_V\hth|_{t=0}=c_V\hth^0,\ \ 
\hpi|_{x=0,X}={\bm{\hpi}}_b
\]
(actually we add the corresponding  integral identities), 
we derive that the modified total energy $\hat{E}:=\half\hw^2+c_V\hth$ is the solution from $L^2(Q)$ to the ``linear'' parabolic IBVP of $\mathcal{L}_3$-type
\begin{gather*}
D_t\hat{E}=D\hat{S}+\hat{F}+\hsi_\G D\hu\ \ \text{in}\ \ Q,\ \
\hat{E}|_{t=0}=\hat{E}^0,\ \
\hat{S}|_{x=0,X}={\bm{\hpi}}_b,
\end{gather*}
with $\vk=\frac{\la}{c_V}\hrho$, $\hat{E}^0$ defined above just before Theorem \ref{th3} and the following flux and free terms
\begin{gather*}
\hat{S}:=S_{\hw}\hw+\hpi
 =\tfrac{\la}{c_V}\hrho D\hat{E}+\hat{\Psi},\ \
\hat{\Psi}:=
(\nu-\tfrac{\la}{c_V})\hrho\,\half D(\hw^2)
-(\hp-\nu\hrho\beta
+\hsi_\G)\hw+\la\hrho\g,
\\[1mm]
\hat{F}:=
-S_{\hw}D\hw+F_{\hw}\hw+\hsi D\hu+\hat{f}[\hx_e]-\hsi_\G D\hu
\\
=(\hat{g}[\hx_e]-D_t\hu_\G+\delta_{m3}D\hsi_\G)\hw
 +\delta_{m1}(2\nu D\hw-k\hth+\nu\beta+\nu D\hu_\G)\hrho D\hu_\G
 +\hat{f}[\hx_e].
\end{gather*}
Here formulas \eqref{SwFw} for $S_{\hw}$ and $F_{\hw}$ and \eqref{Sw-sigDu} and the properties $D\hsi_\G=0$ ($m=1,2$) and $D\hu_\G=0$ ($m=2,3$) have been applied.

\par Similarly, the function $E:=\half w^2+c_V\theta$ is the solution from $L^2(Q)$ to the IBVP
\begin{gather*}
D_tE=DS+F+\sigma_\G Du\ \ \text{in}\ \ Q,\ \
E|_{t=0}=E^0,\ \
S|_{x=0,X}={\bm{\pi}}_b,
\end{gather*}
with $\vk=\frac{\la}{c_V}\hrho$ once again and the similar terms without hats and for $\beta=\g=0$, namely,
\begin{gather*}
S:=(\nu\rho Dw-p-\sigma_\G)w+\pi
 =\tfrac{\la}{c_V}\hrho DE-\tfrac{\la}{c_V}(\hrho-\rho)DE+\Psi,
\\[1mm]
\Psi:=(\nu-\tfrac{\la}{c_V})\rho\,\half D(w^2)-(p+\sigma_\G)w,
\\[1mm]
F:=(g[x_e]-D_t u_\G+\delta_{m3}D\sigma_\G)w
 +\delta_{m1}(2\nu Dw-k\theta+\nu Du_\G)\rho Du_\G
 +f[x_e].
\end{gather*}

\par We subtract the relations of the IBVPs for $\hat{E}$ and $E$ 
and pass to the corresponding IBVP for $\hat{E}-E$ (in more detail, we subtract the corresponding integral identities for their $L^2(Q)$--solutions). 
We need to rewrite the difference of the last terms on the right in the equations for $\hat{E}$ and $E$ as follows
\begin{gather*}
\hsi_\G D\hu-\sigma_\G Du=(\hsi_\G-\sigma_\G)Du+\hsi_\G(D\hu-Du)
=(\hsi_\G-\sigma_\G)Du-\hsi_\G\b-(D_t\hsi_\G)(\hat{\eta}-\eta)+D_tG
\end{gather*}
with $G:=\hsi_\G(\hat{\eta}-\eta)-\hsi_\G|_{t=0}(\hat{\eta}^0-\eta^0)$,
where the equations $D_t\het=D\hu+\beta$ and $D_t\eta=Du$
have been used.
Now Proposition \ref{th2} in the case $m=3$, applied to the IBVP for $v=\hat{E}-E$ in $Q_t$, implies the bound 
\begin{gather}
\|\hat{E}-E\|_{L^2(Q_t)}
 \leq K(N,T)\big(\|\hat{E}^0-E^0\|_{H^{-1;3}}
 +\|\bm{\hpi}_b-\bm{\pi}_b\|_{L^1(0,T)}
\nonumber\\[1mm]
 +\|\hat{\eta}\big[\tfrac{\la}{c_V}(\hrho-\rho)DE+\hat{\Psi}-\Psi\big]\|_{[V_{2;3_*}(Q_t)]^*}
 +\|\hat{F}-F-\hsi_\G\b\|_{[H^{2,1;\hrho,3}(Q_t)]^*}
\nonumber\\[1mm]
 +\|(\hsi_\G-\sigma_\G)Du
 -(D_t\hsi_\G)(\hat{\eta}-\eta)\|_{[H^{2,1;\hrho,3}(Q_t)]^*}+\|G\|_{L^2(Q_t)}\big),\ \ 0<t\leq T.
\label{hEE1}
\end{gather}

\par We estimate the right-hand side of this inequality term by term and begin with the third one.
We carry out the following transformations
\begin{gather*}
\hat{\eta}\big[\tfrac{\la}{c_V}(\hrho-\rho)DE+\hat{\Psi}-\Psi\big]
\\[1mm]
=\hat{\eta}\big\{(\hrho-\rho)[\tfrac{\la}{c_V}(Dw)w+\la D\theta]
+(\nu-\tfrac{\la}{c_V})\big[(\hrho-\rho)(Dw)w+\hrho\half D(\hw^2-w^2)\big]
\\[1mm]
 -(\hp-p+\hsi_\G-\sigma_\G)w
 -(\hp+\hsi_\G)(\hw-w)
 +\nu\hrho\beta\hw+\la\hrho\g\vphantom{\frac{1}{2}}\big\}
\\[1mm]
=(\nu-\tfrac{\la}{c_V})\half D(\hw^2-w^2)
 -(\hat{\eta}-\eta)\big[(\nu\rho Dw-p)w+\pi\big]-k(\hat{\theta}-\theta)w
\\[1mm]
-\hat{\eta}(\hsi_\G-\sigma_\G)w
 -(k\hat{\theta}
 +\hat{\eta}\hsi_\G)(\hw-w)
 +\nu\beta\hw+\la\g,
\end{gather*}
where formula (\ref{hpp}) for $\hp-p$ has been used once again.
Applying the definition of the norm $\|\cdot\|_{[V_{2;3_*}(Q_t)]^*}$ and inequality (\ref{ineq3}) with it, we derive the bound
\begin{gather*}
 \|\hat{\eta}[\tfrac{\la}{c_V}(\hrho-\rho)DE
 +\hat{\Psi}-\Psi]\|_{[V_{2;3_*}(Q_t)]^*}
 \leq\half|\nu-\tfrac{\la}{c_V}|\|\hw^2-w^2\|_{L^2(Q_t)}
\\[1mm]
+C\|\hat{\eta}-\eta\|_{L^{2,\infty}(Q_t)}\big(\|\sigma\|_{L^{3/2}(Q)}\|w\|_{L^{\infty}(Q)}+\|\pi\|_{L^{3/2}(Q)}\big)
+k\|\hth-\theta\|_{L^{2,1}(Q_t)}\|w\|_{L^{\infty}(Q)}
\\[1mm]
+\|\hat{\eta}\|_{L^\infty(Q)}
 \|\hsi_\G-\sigma_\G\|_{L^{\infty,1}(Q)}\|w\|_{L^{2,\infty}(Q)}
\\[1mm]
+\big(Ck\|\hth\|_{L^3(Q)}
+\|\hat{\eta}\|_{L^\infty(Q)}\|\hsi_\G\|_{L^{\infty,2}(Q)}\big)\|\hw-w\|_{L^2(Q_t)}
+\nu\|\beta\hw\|_{[V_{2;3_*}(Q)]^*}+\la\|\g\|_{[V_{2;3_*}(Q)]^*}.
\end{gather*}

\par Let us estimate the norms of differences on its right-hand side. Firstly we have
\begin{gather*}
\|\hw^2-w^2\|_{L^2(Q_t)}
 \leq\big(\|\hw\|_{L^\infty(Q)} +\|w\|_{L^\infty(Q)}\big)\|\hw-w\|_{L^2(Q_t)}
\\[1mm]
\leq \big(\|\hu\|_{L^\infty(Q)}+\|\mathbf{\hu}_b\|_{L^\infty(0,T)}+\|u\|_{L^\infty(Q)}+\|\mathbf{u}_b\|_{L^\infty(0,T)}\big)
\big(\|\hu-u\|_{L^2(Q_t)}+\|\hu_\G-u_\G\|_{L^2(Q_t)}\big).
\end{gather*}
The bound $\|u\|_{L^\infty(Q)}\leq K_1$, see \eqref{Linf for u}, and the assumption $\|\hu\|_{L^\infty(Q)}\leq K$ are essential here.
In the case $m=1$, further we write down the formulas and bound
\begin{gather}
\hu_\G-u_\G=\tfrac{1}{\hV}(I^*\hat{\eta})(\hu_0-u_0)+\tfrac{1}{\hV}(I\hat{\eta})\big(\hu_X-u_X\big)
+(\tfrac{1}{\hV}I\hat{\eta}-\tfrac{1}{V}I\eta)(u_X-u_0),
\nonumber\\[1mm]
|\tfrac{1}{\hV}I\hat{\eta}-\tfrac{1}{V}I\eta|
 =\tfrac{1}{\hV V}\big|[I(\hat{\eta}-\eta)]I^*\het-[I^*(\het-\eta)]I\het\big|
 \leq\tfrac{1}{V}\|\hat{\eta}-\eta\|_{L^1(\Om)}
\label{hVIeta}
\end{gather}
since $I\hat{\eta}+I^*\hat{\eta}=\hV$ and $I\eta+I^*\eta=V$. They imply the bound
\begin{gather}
\|\hu_\G-u_\G\|_{L^2(Q_t)}
 \leq \sqrt{X}\big(\|\mathbf{\hu}_b-\mathbf{u}_b\|_{L^2(0,T)}
+N\|\mathbf{u}_b\|_{L^2(0,T)}\|\hat{\eta}-\eta\|_{L^{1,\infty}(Q_t)}\big),
\label{hugug3}
\end{gather}
see condition \eqref{gas volume}. 
In the cases $m=2,3$, obviously the simpler bound, without the second term on the right, holds as well.

\par Moreover, clearly the following bound holds
\be
 \|\hsi_\G-\sigma_\G\|_{L^{\infty,1}(Q)}+\|D(\hsi_\G-\sigma_\G)\|_{L^{\infty,1}(Q)}
 \leq C\|\mathbf{\hp}_b-\mathbf{p}_b\|_{L^1(0,T)}.
\label{hsgsg}
\ee
For $\beta=\beta_1+\beta_2$, owing to the identity
\begin{gather}
\int_Q\beta\hw\vp\,dxdt
=\int_Q\big(\beta_1+\lan\beta_2\ran_\Omega\big)\hw\vp\,dxdt
 -\int_Q(I^{\lan 3\ran}\beta_2)\big[(D\hw)\vp+\hw D\vp\big]\,dxdt,
\label{id b w vp}
\end{gather}
for any $\vp\in V_2(Q)$,
and inequality \eqref{ineq3} and similarly to it,
the following bound holds
\begin{gather*}
\|\beta\hw\|_{[V_{2;3_*}(Q)]^*}
 \leq C\Big[\big(\inf_{M_0}\|\beta_1\|_{L^{q,r}(Q)}+\|\lan\beta_2\ran_\Omega\|_{L^1(0,T)}\big)\|\hw\|_{L^\infty(Q)}
\\[1mm]
 +\inf_{M_1}\|I^{\lan 3\ran}\beta_2\|_{L^{q,r}(Q)}
\big(\|D\hw\|_{L^2(Q)}+\|\hw\|_{L^\infty(Q)}\big)\Big].
\end{gather*}
In the case $\|D\hw\|_{L^{2,\infty}(Q)}\leq K$, the norm of
$I^{\lan 3\ran}\beta_2$ can be weakened down to $\|I^{\lan 3\ran}\beta_2\|_{L^2(Q)}$.

\par Further, we bound the fourth term on the right-hand side of (\ref{hEE1}) that is a rather lengthy procedure.
To this end, using formula \eqref{dthuG} for $D_t\hu_\G$ and the similar formula for  $D_tu_\G$ (with $\beta=0$), we first accomplish the following transformations
\begin{gather*}
\hat{F}-F-\hsi_\G\b=(g[x_e]-(D_t\hu)_\G+\delta_{m3}D\hsi_\G)(\hw-w)
 +(\hg[\hx_e]-g[x_e])\hw +\hat{f}[\hx_e]-f[x_e]
\\[1mm]
-[(D_t\hu)_\G-(D_tu)_\G-\delta_{m3}D(\hsi_\G-\sigma_\G)]w
-\delta_{m1}\big[\tfrac{1}{\hV}\hu_{X0}(\hw+IP_{\het}\beta)\hw-\tfrac{1}{V}u_{X0}w^2\big]
\\[1mm
]+\delta_{m1}\big\{[2\nu D(\hw-w)-k(\hth-\theta)]\hrho D\hu_\G
\\[1mm]
+(2\nu Dw-k\theta)(\hrho D\hu_\G-\rho Du_\G)
+\nu\big[\hat{\eta}(\hrho D\hu_\G)^2-\eta(\rho Du_\G)^2\big]+\beta\nu\hrho D\hu_\G\big\}-\beta\hsi_\G 
\end{gather*}
and also, for $m=1$,
\[
\int_{Q_t}[D(\hw-w)](\hrho D\hu_\G)y\,dx d\tau =-\int_{Q_t}(\hw-w)(\hrho D\hu_\G)Dy\,dx d\tau,
\]
for any $y\in L^2(Q)$ with $Dy\in L^2(Q)$, using the independence of $\hrho D\hu_\G$ on $x$, and
\[
\hat{\eta}(\hrho D\hu_\G)^2-\eta(\rho Du_\G)^2
 =(D\hu_\G+Du_\G)(\hrho D\hu_\G-\rho Du_\G)
 +(\hat{\eta}-\eta)(\hrho D\hu_\G)\rho Du_\G.
\]
Owing to inequality (\ref{in5a}) they imply the bound
\begin{gather*}
 \|\hat{F}-F-\hsi_\G\b\|_{[H^{2,1;\hrho,3}(Q_t)]^*}
 \leq CK_0\big(\|g[x_e]\|_{L^2(Q)}+\|D_t\hu_\G\|_{L^2(Q)}
 +\delta_{m3}\|D\hsi_\G\|_{L^2(Q)}\big)\|\hw-w\|_{L^2(Q_t)}
\\[1mm]
+\|(\hg[\hx_e]-g[x_e])\hw\|_{[H^{2,1;\hrho,3}(Q_t)]^*}
 +\|\hat{f}[\hx_e]-f[x_e]\|_{[H^{2,1;\hrho,3}(Q_t)]^*}
\\[1mm]
 +CK_0\big(\|(D_t\hu)_\G-(D_tu)_\G\|_{L^1(Q_t)}+\delta_{m3}\|D(\hsi_\G-\sigma_\G)\|_{L^1(Q)}\big)
 \|w\|_{L^\infty(Q)}
\\[1mm]
+\delta_{m1}\big(\|\tfrac{1}{\hV}\hu_{X0}\hw^2-\tfrac{1}{V}u_{X0}w^2\|_{L^1(Q_t)}+\|\tfrac{1}{\hV}(\hu_X-\hu_0)\|_{L^\infty(0,T)}\|IP_{\het}\beta\|_{L^1(Q)} \|\hw\|_{L^\infty(Q)}\big)
\\[1mm] 
+CK_0\delta_{m1}\big\{
\big(2\nu\|\hw-w\|_{L^{2,1}(Q_t)}
 +k\|\hth-\theta\|_{L^1(Q_t)}\big)\|\hrho D\hu_\G\|_{L^\infty(Q)}
 \\[1mm]
+CK_0\big[2\nu
\|Dw\|_{L^2(Q)}+k\|\theta\|_{L^2(Q)}
 +\nu C\big(\|D\hu_\G\|_{L^\infty(Q)}+\|Du_\G\|_{L^\infty(Q)}\big)\big]
 \|\hrho D\hu_\G-\rho Du_\G\|_{L^2(Q_t)}
\\[1mm]
+CK_0\nu\|\hat{\eta}-\eta\|_{L^1(Q_t)}
 \|\hrho D\hu_\G\|_{L^\infty(Q)}\|\rho Du_\G\|_{L^\infty(Q)}\big\}
 +\|\beta(\delta_{m1}\nu\hrho D\hu_\G-\hsi_\G)\|_{[H^{2,1;\hrho,3}(Q)]^*}.
\end{gather*}

\par Next we bound the right-hand side of this inequality term by term, and first we use the bound
\begin{gather*}
\|g[x_e]\|_{L^2(Q)}+\|(D_t\hu)_\G\|_{L^2(Q)}
 +\delta_{m3}\|D\hsi_\G\|_{L^2(Q)}
\\[1mm]  
 \leq \|\bar{g}\|_{L^2(Q)}+ C(\|D_t\mathbf{\hu}_b\|_{L^2(0,T)}+\delta_{m3}\|\mathbf{\hp}_b\|_{L^2(0,T)})\leq C_1N,
\end{gather*}

\par We apply the formulas
\begin{gather} 
\hg[\hx_e]-g[x_e]=g[\hx_e]-g[x_e]+\hg[\hx_e]-g[\hx_e]=g[\hx_e]-g[x_e]+g_1+g_2,
\label{hgg}\\[1mm]
\int_{Q_t}g_2\hw y\,dx dt
 =-\int_{Q_t}(I^{\lan m\ran}g_2)[(D\hw)y+\hw Dy]\,dx dt
+\de_{m3}\int_{Q_t}\lan g_2\ran_\Omega\hw y\,dx dt,
\nonumber
\end{gather}
for any $y\in L^\infty(Q)$ with $Dy\in L^{2,\infty}(Q)$ (the latter one follows from formulas (\ref{fpsi})), and get
\begin{gather*}
\|(\hg[\hx_e]-g[x_e])\hw\|_{[H^{2,1;\hrho,3}(Q_t)]^*}
 \leq CK_0\big(\|g[\hx_e]-g[x_e]\|_{L^1(Q_t)}
+\|g_1\|_{L^1(Q)}\big)\|\hw\|_{L^\infty(Q)}
\\[1mm]
+CK_0\big(\|I^{\lan m\ran}g_2\|_{L^2(Q)}\|\hw\|_{V_2(Q)}
 +\de_{m3}\|\lan g_2\ran_\Omega\|_{L^1(0,T)}\|\hw\|_{L^{1,\infty}(Q)}\big).
\end{gather*}
In the case $\|D\hw\|_{L^{2,\infty}(Q)}\leq K$, the norm of $I^{\lan m\ran}g_2$ can be weakened down to
$\|I^{\lan m\ran}g_2\|_{L^{2,1}(Q)}$.

\par Formula (\ref{hgg}), with $g$ replaced with $f$, and inequality \eqref{in5a} imply
\[
 \|\hat{f}[\hx_e]-f[x_e]\|_{[H^{2,1;\hrho,3}(Q_t)]^*}
 \leq CK_0\|f[\hx_e]-f[x_e]\|_{L^1(Q_t)}
 +\|\hat{f}[\hx_e]-f[\hx_e]\|_{[H^{2,1;\hrho,3}(Q)]^*}.
\]

\par Clearly, by definition, in the case $m=1$ we have 
\[
(D_t\hu)_\G-(D_t u)_\G=D_t\hu_0-D_tu_0
+\tfrac{1}{\hV}(I\het)D_t(\hu_{X0}-u_{X0})+\big(\tfrac{1}{\hV}I\het-\tfrac{1}{V}I\eta\big)D_tu_{X0}.
\]
Applying also bound \eqref{hVIeta}, we get
\[
\|(D_t\hu)_\G-(D_t u)_\G\|_{L^1(Q_t)}
 \leq C\big(\|D_t(\mathbf{\hu}_b-\mathbf{u}_b)\|_{L^1(0,T)}+N\|\hat{\eta}-\eta\|_{L^{1,\infty}(Q_t)}\|D_t\mathbf{u}_b\|_{L^1(0,T)}\big).
\]
In the cases $m=2,3$, we simply have $\|(D_t\hu)_\G-(D_t u)_\G\|_{L^1(Q_t)}\leq\|D_t(\hu_X-u_X)\|_{L^1(0,T)}$.

\par We also can write
\begin{gather*}
\tfrac{1}{\hV}\hu_{X0}\hw^2-\tfrac{1}{V}u_{X0}w^2=\big(\tfrac{1}{\hV}\hu_{X0}-\tfrac{1}{V}u_{X0}\big)w^2+\tfrac{1}{V}\hu_{X0}(\hw+w)(\hw-w),
\\[1mm]
 \tfrac{1}{\hV}\hu_{X0}-\tfrac{1}{V}u_{X0}
 =\tfrac{1}{V}(\hu_{X0}-u_{X0})
 -\hu_{X0}\tfrac{1}{\hV V}(\hat{V}-V\big).
\end{gather*}
Consequently we obtain
\begin{gather*}
\|\tfrac{1}{\hV}\hu_{X0}\hw^2-\tfrac{1}{V}u_{X0}w^2\|_{L^1(Q_t)}
\\[1mm]
\leq  \big(NX\|\mathbf{\hu}_b-\mathbf{u}_b\|_{L^2(0,T)}
+NK_0\|\mathbf{u}_b\|_{L^2(0,T)}\|\hat{\eta}-\eta\|_{L^{1,\infty}(Q_t)}\big)\|w\|_{L^\infty(Q)}^2
\\[1mm] 
+N\|\mathbf{u}_b\|_{L^\infty(0,T)}\big(\|\hw\|_{L^\infty(Q)}+\|w\|_{L^\infty(Q)}\big)\|\hw-w\|_{L^1(Q_t)}.
\end{gather*}
Also quite similarly, owing to formulas \eqref{DhuG} for $D\hu_\G$, we have
\begin{gather*}
\|\hrho D\hu_\G-\rho Du_\G\|_{L^2(Q_t)}
 \leq N\sqrt{X}\|\mathbf{\hu}_b-\mathbf{u}_b\|_{L^2(0,T)}
+N\tfrac{K_0}{\sqrt{X}}\|\mathbf{u}_b\|_{L^2(0,T)}\|\hat{\eta}-\eta\|_{L^{1,\infty}(Q_t)}.
\end{gather*}
\par In addition, we get
\[
\|IP_{\het}\beta\|_{L^1(Q)}\leq \|I^{\lan 3\ran}\beta\|_{L^1(Q)}+C \|\lan \beta\ran_\Om\|_{L^1(0,T)}.
\]
Similarly to formula \eqref{in5b} for $m=3$, we have
\begin{gather*}
 \int_Q\beta(z-\hsi_\G)y\,dx dt
 =\int_Q\lan\beta\ran_\Om(z-\hsi_\G)y\,dx dt
  -\int_Q\,(I^{\lan 3\ran}\beta)\big[-(D\hsi_\G)y+(z-\hsi_\G)Dy\big]\,dx dt
\end{gather*}
for $z:=\delta_{m1}\nu\hrho D\hu_\G$ and any $y\in H^{2,1;\hrho,3}(Q)$.
Therefore, similarly to inequality \eqref{ineq3} we have
\begin{gather*}
 \|\beta(\delta_{m1}\nu\hrho D\hu_\G-\hsi_\G)\|_{[H^{2,1;\hrho,3}(Q)]^*}
\\[1mm]
 \leq C\big(\|\lan\beta\ran_\Om\|_{L^1(0,T)}
 +\inf_{M_0}\|I^{\lan 3\ran}\beta\|_{L^{q,r}(Q)}\big)
\big(\delta_{m1}\|\hrho D\hu_\G\|_{L^\infty(Q)}
+\|D\hsi_\G\|_{L^\infty(Q)} +\|\hsi_\G\|_{L^\infty(Q)}\big).
\end{gather*}

\par Now we turn to the fifth term on the right-hand side of (\ref{hEE1}).
Owing to the identity
\begin{gather*}
\int_{Q}z(Du)y\,dx dt
 =\int_0^T[zuy]\big|_{x=0}^{x=X}\,dt
 -\int_{Q}\big\{(Dz)y+zDy\big\}u\,dx dt,
\end{gather*}
for $z:=\hsi_\G-\sigma_\G$ and any $y\in L^\infty(Q)$ with $Dy\in L^{2,\infty}(Q)$,
together with inequality (\ref{in5a}), we get
\begin{gather*}
\|(\hsi_\G-\sigma_\G)Du
 -(D_t\hsi_\G)(\hat{\eta}-\eta)\|_{[H^{2,1;\hrho,3}(Q_t)]^*}
\\[1mm]
\leq CK_0\big[\|\mathbf{\hp}_b-\mathbf{p}_b\|_{L^1(0,T)}\big(\|u|_{x=0}\|_{L^\infty(0,T)}+\|u|_{x=X}\|_{L^\infty(0,T)}\big)
\\[1mm]
+\big(\|D(\hsi_\G-\sigma_\G)\|_{L^1(Q)}
 +\|\hsi_\G-\sigma_\G\|_{L^{2,1}(Q)}\big)\|u\|_{L^\infty(Q)}
+\|D_t\hsi_\G\|_{L^{\infty,1}(Q)}\|\hat{\eta}-\eta\|_{L^{1,\infty}(Q_t)}\big].
\end{gather*}
Recall that since $\|u\|_{L^\infty(Q)}\leq K_1$ and $Du\in L^1(Q)$, one has $\|u|_{x=0}\|_{L^\infty(0,T)}+\|u|_{x=X}\|_{L^\infty(0,T)}\leq 2K_1$.
The involved difference $\hsi_\G-\sigma_\G$ has already been bounded above in (\ref{hsgsg}). 

\par For the last sixth term on the right-hand side of (\ref{hEE1}), the following bound holds
\[
\|G\|_{L^2(Q_t)}=\|\hsi_\G(\hat{\eta}-\eta)-\hsi_\G|_{t=0}(\hat{\eta}^0-\eta^0)\|_{L^2(Q_t)}
\leq\|\hsi_\G\|_{C(\bar{Q})}2T^{1/2}\|\hat{\eta}-\eta\|_{C(0,t;L^2(\Om))}.
\]

\par Eventually bound (\ref{hEE1}) leads to the following one
\begin{gather}
\|\hat{E}-E\|_{L^2(Q_t)}
 \leq K^{(2)}(N,T)\Big(\|\hat{\eta}-\eta\|_{C(0,t;L^2(\Om))}
 +\|\hu-u\|_{L^2(Q_t)}
 +\|\hth-\theta\|_{L^{2,1}(Q_t)}
\nonumber\\[1mm]
 +\|\hat{E}^0-E^0\|_{H^{-1;3}}
 +\|\mathbf{\hu}_b-\mathbf{u}_b\|_{W^{1,1}(0,T)}
 +\|\mathbf{\hp}_b-\mathbf{p}_b\|_{L^1(0,T)}
 +\|\bm{\hpi}_b-\bm{\pi}_b\|_{L^1(0,T)}
\nonumber\\[1mm]
 +\inf_{M_0}\|\beta_1\|_{L^{q,r}(Q)}
 +\inf_{M_1}\|I^{\lan 3\ran}\beta_2\|_{L^{q,r}(Q)}
 +\|\lan\beta_2\ran_\Omega\|_{L^1(0,T)}
 +\|\g\|_{[V_{2;3_*}(Q)]^*}
\nonumber\\[1mm]
 +\|\hg[\hx_e]-g[\hx_e]\|_{L^1(Q_t)}
 +\|g_1\|_{L^1(Q)}
 +\|I^{\lan m\ran}g_2\|_{L^2(Q)}+\de_{m3}\|\lan g_2\ran_\Omega\|_{L^1(0,T)}
\nonumber\\[1mm]
 +\|f[\hx_e]-f[x_e]\|_{L^1(Q_t)}
 +\|\hat{f}[\hx_e]-f[\hx_e]\|_{[H^{2,1;\hrho,3}(Q)]^*}\Big),
\ \ 0<t\leq T.
\label{hEE2}
\end{gather}
\par The formula
$c_V(\hth-\theta)=\hat{E}-E-\half(\hw^2-w^2)$
and the above bound for $\|\hw^2-w^2\|_{L^2(Q_t)}$ allow us to replace $\hat{E}-E$ by $\hth-\theta$ on the left in \eqref{hEE2}.
We divide such bound by $2K^{(2)}(N,T)$ and add the result to the improved bound (\ref{huu2}), without the term $\|\hat{\eta}-\eta\|_{L^{2,4}(Q_t)}$ on the right, proved at the previous step of the proof.
For terms with $\beta=\beta_1+\beta_2$, we apply inequalities \eqref{ineq3} for $\beta=\beta_1$ and 
\begin{gather*}
\|I_t\lan\beta\ran_\Om\|_{L^\infty(0,T)}\leq|\lan\beta\ran_\Om\|_{L^1(0,T)}\leq\tfrac{1}{X}\|\beta\|_{L^1(Q)},
\\
\|\beta_2\|_{[V_{2;\hrho,m_*}(Q)]^*}\leq\sqrt{X}|\lan\beta_2\ran_\Om\|_{L^1(0,T)}+\|I^{\lan 3\ran}\beta_2\|_{L^2(Q)},
\end{gather*}
for $\beta=\beta_1,\beta_2$, the last of which follows from identity \eqref{id b w vp} with $\hw$ replaced with 1.
Coarsening some norms and applying also the bounds
\[
 \|g[\hx_e]-g[x_e]\|_{L^1(Q_t)}+\|f[\hx_e]-f[x_e]\|_{L^1(Q_t)}
 \leq 2\|b|\hx_e-x_e|\|_{L^1(Q_t)}
 \leq 2I_t\big(\|b\|_{L^{q_e'}(\Om)}\|\hx_e-x_e\|_{L^{q_e}(\Om)}\big),
\]
where $b:=\|D_\chi g\|_{L^{\infty}(-K',K')}+\|D_\chi f\|_{L^{\infty}(-K',K')}$, with $K'$ such that $|x_e|\leq K'$ and $|\hx_e|\leq K'$, and $\|b\|_{L^{q_e',1}(Q)}\leq K_2$,
see assumption (\ref{ggff}), we derive
\begin{gather}
\|\hat{\eta}-\eta\|_{C(0,t;L^2(\Om))}
 +\|\hu-u\|_{L^2(Q_t)\cap L^\infty(0,t;H^{-1;m})}
 +\|\hth-\theta\|_{L^2(Q_t)}
 \leq K^{(3)}(N,T)\big(\|\hth-\theta\|_{L^{2,1}(Q_t)}
\nonumber\\[1mm]
+I_t\big(\|b\|_{L^{q_e'}(\Om)}\|\hx_e-x_e\|_{L^{q_e}(\Om)}\big)
 +\|\hat{\eta}^0-\eta^0\|_{L^2(\Om)}
 +\|\hu^0-u^0\|_{H^{-1;m}}
 +\|\hat{E}^0-E^0\|_{H^{-1;3}}
\nonumber\\[1mm]
 +\|\mathbf{\hu}_b-\mathbf{u}_b\|_{W^{1,1}(0,T)}
 +\|\mathbf{\hp}_b-\mathbf{p}_b\|_{L^1(0,T)}
 +\|\bm{\hpi}_b-\bm{\pi}_b\|_{L^1(0,T)}
\nonumber\\[1mm]
 +\inf_{M_0}\|\beta_1\|_{L^{q,r}(Q)}
 +\inf_{M_1}\|I^{\lan 3\ran}\beta_2\|_{L^{q,r}(Q)}
 +\|\lan\beta_2\ran_\Omega\|_{L^1(0,T)}
 +\|\g\|_{[V_{2;3_*}(Q)]^*}
\nonumber\\
 +\|g_1\|_{L^1(Q)}
 +\|I^{\lan m\ran}g_2\|_{L^2(Q)}+\de_{m3}\|\lan g_2\ran_\Omega\|_{L^1(0,T)}
 +\|\hat{f}[\hx_e]-f[\hx_e]\|_{[H^{2,1;\hrho,3}(Q)]^*}\big),
\ 0<t\leq T.
\label{hzz3}
\end{gather}
Due to Lemma \ref{lem1} this bound implies the improved one, without the term $\|\hth-\theta\|_{L^{2,1}(Q_t)}$ on the right.

\par The formula
\[
 \hat{E}^0-E^0=\hat{e}^0-e^0-(\hu^0-u^0)\hu_\G(0)-u^0[\hu_\G(0)-u_\G(0)]
 +\half[\hu_\G(0)+u_\G(0)][\hu_\G(0)-u_\G(0)]
\]
implies Remark \ref{rem1}, Item 1, with respect to the improved bound \eqref{hzz3} and consequently (looking ahead) to the resulting bound (\ref{hzz}).

\smallskip\par 4. Now we bound $\hx_e-x_e$. Applying the operator $I_t$ to equations $D_t\hx_e=\hu$ and $D_t\het=D\hu+\beta$, we derive
\[
 \hx_e=\hx_e^0+I_t\hu,\ \ DI_t\hu=\hat{\eta}-\hat{\eta}^0-I_t\beta.
\]
Since $\hx_e^0=I\hat{\eta}^0+\beta_e$ and $I_t\hu=I_t\lan\hu\ran_\Om+I^{\lan 1\ran}D I_t\hu$ (see (\ref{idp})), the following formula holds
\[
 \hx_e=\lan I\hat{\eta}^0\ran_\Om+\beta_e+I_t\lan\hu\ran_\Om
 +I^{\lan 1\ran}\hat{\eta}-I^{\lan 1\ran}I_t\beta.
\]
This formula and the similar one for problem $\mathcal{P}_m$ (with $\beta_e=\beta=0$) imply the bound
\begin{gather*}
\|\hx_e-x_e\|_{L^{q_e,\infty}(Q_t)}
 \leq C(\|\hat{\eta}-\eta\|_{L^{1,\infty}(Q_t)}+
\|\hu-u\|_{L^1(Q_t)}+\|\hat{\eta}^0-\eta^0\|_{L^1(\Om)})
\\[1mm]
+\|\beta_e\|_{L^{q_e}(\Om)}
 +\|I_t I^{\lan 1\ran}\beta\|_{L^{q_e,\infty}(Q)}.
\end{gather*}
Clearly we also have 
\begin{gather}
\|I_t I^{\lan 1\ran}\beta\|_{L^{q_e,\infty}(Q)}\leq \|I^{\lan 1\ran}\beta\|_{L^{q_e,1}(Q)}\leq C\|\beta\|_{L^1(Q)}.
\label{I1beta}
\end{gather}

\par Due to these bounds the improved bound (\ref{hzz3}) proved in the previous item, without the term $\|\hth-\theta\|_{L^{2,1}(Q_t)}$ on the right,
remains valid after adding the terms $\|\hx_e-x_e\|_{L^{q_e,\infty}(Q_t)}$ and $C\big(\|\beta_e\|_{L^{q_e}(\Om)}+\|I_t I^{\lan 1\ran}\beta_2\|_{L^{q_e,\infty}(Q)}$\big) to its left-
and right-hand sides, respectively.
Finally, the application to the result of the Gronwall--Bellman lemma completes the proof of bound (\ref{hzz}), but without the last term on the left.

\par Note that, in addition to inequalities  \eqref{I1beta}, we have
\[
\|I^{\lan 1\ran}\beta\|_{L^{q_e,1}(Q)}\leq C\|I\beta\|_{L^{q_e,1}(Q)}\leq C\|I^{\lan 3\ran}\beta\|_{L^{q_e,1}(Q)}
+C_1\|\lan\beta\ran_\Om\|_{L^1(0,T)},
\]
and the term $\|I_t I^{\lan 1\ran}\beta_2\|_{L^{q_e,\infty}(Q)}$ in bound \eqref{hzz} can be omitted for $q_e=2$ or considering only the values $q\in [q_e,\infty]$ in the definition of the set $M_1$.

\smallskip\par 5. By virtue of formula (\ref{lne}) for $\nu\ln\hat{\eta}$ and the similar formula for problem $\mathcal{P}_m$ we have
\[
 |I_t(\hsi-\sigma)|\leq \nu K_0\big(|\hat{\eta}-\eta|+|\hat{\eta}^0-\eta^0|\big)+|I_t(\hp-p)|,
\]
Consequently (using once again formula \eqref{hpp} for $\hp-p$)
\[
 \|I_t(\hsi-\sigma)\|_{C(0,T;L^2(\Om))}
 \leq K\big(\|\hat{\eta}-\eta\|_{C(0,T;L^2(\Om))}
 +\|\hth-\theta\|_{L^{2,1}(Q)}\big)
\]
that completes the proof.
In the cases $m=2,3$, formula (\ref{its2}) could be also applied.
\end{proof}

\par Now we derive  useful corollaries on the H\"{o}lder continuity on the data for the solution in stronger norms, in general, under some additional assumptions on the data; we apply them below.
Denote shortly by $\Delta$ the sum of norms in brackets on the right in bound \eqref{hzz}.

\begin{corollary}
\label{th3cor1}
Let the hypotheses of Theorem \ref{th3} be valid and  $\|\bar{g}\|_{L^{2,1}(Q)}\leq N$ in condition $(\hat{C}_2)$. Then the bounds hold
\be
 \|\hu-u\|_{L^{\infty,2}(Q)}\leq K(N)\De^{1/2},\ \ 
 \|I_t(\hsi-\sigma)\|_{C(\bar{Q})}\leq K(N)\De^{1/2}.
\label{cor1f1}
\ee
If $\|\bar{g}\|_{L^{\infty,1}(Q)}\leq N$ in conditions $(C_2)$ and $(\hat{C}_2)$, then the partially stronger bound holds
\be
\|I_t(\hsi-\sigma)\|_{C(\bar{Q})}\leq K(N)\De^{2/3}.
\label{cor1f2}
\ee
\par If $D\hth\in L^2(Q)$ and
$\|D\hth\|_{L^2(Q)}\leq \hat{K}(N)$, then the additional bounds hold
\begin{gather}
 \|\hth-\theta\|_{L^{\infty,2}(Q)}\leq K(N)\De^{1/2},\ \
\label{cor1f3}\\[1mm]
 \|\hat{\eta}-\eta\|_{L^\infty(Q)}
 \leq K(N)\big(\|\hat{\eta}^0-\eta^0\|_{L^\infty(\Om)}+\De^{1/2}\big).
\label{cor1f4}
\end{gather}
\end{corollary}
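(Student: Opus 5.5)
The approach is interpolation: combine the weak-norm bound of Theorem \ref{th3}, which is linear in $\De$, with uniform $K(N)$-bounds in strong norms coming from the a priori estimates of both solutions. The basic tool is the 1D Gagliardo--Nirenberg type inequality
\[
\|z\|_{L^\infty(\Om)}\leq C\big(\|z\|_{L^2(\Om)}^{1/2}\|Dz\|_{L^2(\Om)}^{1/2}+\|z\|_{L^2(\Om)}\big),
\]
applied pointwise in $t$, followed by the Cauchy--Schwarz inequality in $t$.

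For $\hu-u$, we have $\|D(\hu-u)\|_{L^2(Q)}\leq K(N)$ from the assumed properties of $\hu$ and $u\in V_2(Q)$. Hence
\[
\|\hu-u\|_{L^{\infty,2}(Q)}^2\leq C\int_0^T\big(\|\hu-u\|_{L^2(\Om)}\|D(\hu-u)\|_{L^2(\Om)}+\|\hu-u\|_{L^2(\Om)}^2\big)\,dt\leq K\big(\De+\De^2\big),
\]
and we may assume $\De\leq1$ (otherwise the bounds are trivial from the uniform bounds), so this gives the first bound in \eqref{cor1f1}. The same argument with $D\hth$ bounded by $\hat{K}(N)$ and $D\theta\in L^2(Q)$ gives \eqref{cor1f3}.

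For $I_t(\hsi-\sigma)$, I use \eqref{dits}: $DI_t(\hsi-\sigma)=\hu-u-(\hu^0-u^0)-I_t(\hg[\hx_e]-g[x_e])$. The extra assumption $\|\bar g\|_{L^{2,1}(Q)}\leq N$ together with $u,\hu\in L^\infty(Q)$ gives $\|DI_t(\hsi-\sigma)\|_{C(0,T;L^2(\Om))}\leq K(N)$. Interpolating this against the $C(0,T;L^2(\Om))$-bound of Theorem \ref{th3} yields $\De^{1/2}$. If $\|\bar g\|_{L^{\infty,1}}\leq N$, then the derivative is bounded in $L^\infty(Q)$, and the sharper 1D inequality $\|z\|_{L^\infty}\leq C(\|z\|_{L^2}^{2/3}\|Dz\|_{L^\infty}^{1/3}+\|z\|_{L^2})$ gives \eqref{cor1f2}. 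To get continuity on $\bar Q$ rather than only an $L^\infty$-bound, I note that $I_t\hsi$ is continuous in $t$ with values in $L^2$ and has a bounded $x$-derivative.

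For \eqref{cor1f4}, I return to the pointwise inequality \eqref{hee2} (or its $m=1$ analogue built from \eqref{lne1}). I take the $L^\infty(\Om)$ norm and bound the right-hand terms as follows.
\begin{itemize}
\item $I^{\lan m\ran}(\hu-u)$ and $I^{\lan m\ran}(\hu^0-u^0)$ are controlled through the $L^\infty(0,T;H^{-1;m})$ bound combined with $\|\hu-u\|_{L^\infty(0,T;L^1)}$. The cleaner route is to write $I^{\lan m\ran}(\hu-u-\hu^0+u^0-I_t(\ldots))$ as $I_t(\hsi-\sigma)$ minus boundary terms via \eqref{its2}, so the already established bound on $\|I_t(\hsi-\sigma)\|_{C(\bar Q)}$ is used directly.
\item $I_t|\hp-p|$ is handled by \eqref{hpp} together with \eqref{cor1f3}. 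The piece $\theta|\hat\eta-\eta|$ is absorbed by Gronwall in $\|\cdot\|_{L^\infty(\Om)}$, using $\|\theta\|_{L^{\infty,1}}\leq K$.
\end{itemize}
This yields $\|\hat\eta-\eta\|_{L^\infty(Q)}\leq K(\|\hat\eta^0-\eta^0\|_{L^\infty}+\De^{1/2})$.

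The main obstacle I expect is justifying the uniform $L^\infty$-type control of $DI_t(\hsi-\sigma)$ and the continuity of $I_t(\hsi-\sigma)$ on $\bar Q$ for weak solutions. A further difficulty is the case $m=1$ in \eqref{cor1f4}, where the exponential representation \eqref{lne1} has to be linearized in $L^\infty$ rather than in $L^2$.
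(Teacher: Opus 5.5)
Your proposal is correct and is essentially the paper's proof. The paper obtains \eqref{cor1f1}--\eqref{cor1f3} from Theorem~\ref{th3} by the same 1D multiplicative inequalities, using the uniform bounds on $D\hu$, $Du$, $D\hth$, $D\theta$ and the bound on $DI_t\hsi$ read off directly from \eqref{dits}, so that step is no real obstacle; it then gets \eqref{cor1f4} by Gronwall from the $L^\infty$ form of the difference of \eqref{lne}.

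The paper uses \eqref{lne} directly for every $m$, and your ``cleaner route'' just reconstructs it. So neither \eqref{hee2} nor \eqref{lne1} is needed, and your anticipated $m=1$ difficulty does not arise: the $C(\bar Q)$ bound on $I_t(\hsi-\sigma)$ already controls its spatial mean. Your first option in that bullet would fail as stated, since $L^1$ control of $\hu-u$ together with $L^2$-smallness of $I^{\lan m\ran}(\hu-u)$ does not give smallness in $L^\infty(\Om)$.
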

\begin{proof}
The following multiplicative inequalities are well-known
\be
\|\alpha\|_{C(\bar{\Om})}
 \leq C\|\alpha\|_{L^2(\Om)}^{1/2}\|\alpha\|_{W^{1,2}(\Om)}^{1/2},\ \
\|\alpha\|_{C(\bar{\Om})}
 \leq C\|\alpha\|_{L^2(\Om)}^{2/3}\|\alpha\|_{W^{1,\infty}(\Om)}^{1/3}.
\label{cor1f5}
\ee
Also formula (\ref{dits}) for $\hu$ implies the bound
\be
\|DI_t\hsi\|_{L^{q,\infty}(Q)}
 \leq X^{1/q}(\|\hu\|_{L^\infty(Q)}
 +\|\hu^0\|_{L^\infty(\Om)})+\|\bar{g}\|_{L^{q,1}(Q)},\ \
 1\leq q\leq\infty,
\label{cor1f6}
\ee
and the similar bound for $DI_t\sigma$ holds as well.
\par Bounds (\ref{cor1f1}) and (\ref{cor1f3}) follow from Theorem \ref{th3} due to the first inequality (\ref{cor1f5}) and bound (\ref{cor1f6}) for $q=2$.
Bound (\ref{cor1f2}) is valid due to the second inequality (\ref{cor1f5}) and bound (\ref{cor1f6}) for $q=\infty$.

\par Equality (\ref{lne}) for $\ln\hat{\eta}$, the similar one for problem $\mathcal{P}_m$ and formula (\ref{hpp}) for $\hat{p}-p$ imply
\begin{gather*}
\nu K_0^{-1}\|\hat{\eta}-\eta\|_{L^\infty(Q_t)}
 \leq\nu K_0\|\hat{\eta}^0-\eta^0\|_{L^\infty(\Om)}
 +\|I_t(\hsi-\sigma)\|_{L^\infty(Q)}
 +kK_0\|\hth-\theta\|_{L^{\infty,1}(Q)}+
\nonumber\\[1mm]
+kK_0^2I_t(\|\theta\|_{L^\infty(\Om)}\|\hat{\eta}-\eta\|_{L^\infty(\Om)}),\ \ 0<t\leq T.
\end{gather*}
Then the Gronwall--Bellman lemma leads to the bound
\[
 \|\hat{\eta}-\eta\|_{L^\infty(Q)}\leq
 K\big(\|\hat{\eta}^0-\eta^0\|_{L^\infty(\Om)}
 +\|I_t(\hsi-\sigma)\|_{L^\infty(Q)}
 +\|\hth-\theta\|_{L^{\infty,1}(Q)}\big),
 \]
and therefore the last bound (\ref{cor1f4}) follows from the previous ones.
The condition imposed on $\bar{g}$ has been not used in the bounds for $\hu-u$ and $\hth-\th$.
\end{proof}

\par 
Define the Banach space $W(Q)$ of functions $w\in H^1(Q)$ having the finite norm
$\|w\|_{W(Q)}:=\|w\|_{L^2(Q)}+\|D_tw\|_{L^2(Q)}+\|Dw\|_{L^{2,\infty}(Q)}$.
Let $\zeta\in H^1(0,T)$ be a cut-off function such that $\|D_t\zeta\|_{L^2(0,T)}\leq N$ and $\zeta(0)=0$.
\begin{corollary}
\label{th3cor2}
Let the hypotheses of Theorem \ref{th3} be valid.
\par 1. Let in addition the conditions on the data
\[
\|\zeta^2\bar{f}\|_{L^2(Q)}\leq N,\ \
\|D_t(\zeta\bu_b)\|_{L^{4/3}(0,T)}
+\|\zeta^2\bm{\pi}_b\|_{W^{1,1}(0,T)}\leq N
\]
as well as $\|\zeta\hu\|_{W(Q)}+\|\zeta^2\hth\|_{W(Q)}\leq \hat{K}(N)$ be valid. Then the following internal in $t$ bounds hold
\begin{gather}
\|\zeta(\hu-u)\|_{C(0,T;L^2(\Om))}+\|\zeta^2(\hth-\theta)\|_{C(0,T;L^2(\Om))}
 \leq K(N)\De^{1/2},
\label{cor2f1}\\[1mm]
\|\zeta(\hu-u)\|_{C(\bar{Q})}+\|\zeta^2(\hth-\theta)\|_{C(\bar{Q})}
 \leq K(N)\De^{1/4}.
\label{cor2f2}
\end{gather}

\par 2. Let in addition the stronger conditions on the data
\[
\|Du^0\|_{L^2(\Om)}+\|D\theta^0\|_{L^2(\Om)}\leq N,\,\
\|\bar{f}\|_{L^2(Q)}\leq N,\,\
 \|D_t\bu_b\|_{L^{4/3}(0,T)}+\|\bm{\pi}_b\|_{W^{1,1}(0,T)}\leq N,
\]
the conjunction conditions $u_0(0)=u^0(0)$ ($m=1$) and $u_X(0)=u^0(X)$ ($m=1,2$)
as well as the bound $\|\hu\|_{W(Q)}+\|\hth\|_{W(Q)}\leq \hat{K}(N)$ be valid. 
Then the following bounds hold
\begin{gather}
\|\hu-u\|_{C(0,T;L^2(\Om))}+\|\hth-\theta\|_{C(0,T;L^2(\Om))}
 \leq K(N)\De^{1/2},
\label{cor2f3}\\[1mm]
\|\hu-u\|_{C(\bar{Q})}+\|\hth-\theta\|_{C(\bar{Q})}
 \leq K(N)\De^{1/4}.
\label{cor2f4}
\end{gather}
\end{corollary}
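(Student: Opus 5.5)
The plan is to interpolate between the weak-norm bound $\|\hu-u\|_{L^2(Q)}+\|\hth-\theta\|_{L^2(Q)}\leq K\De$ from Theorem \ref{th3} and uniform bounds in $W(Q)$ for the differences. For a function $\alpha\in W(Q)$ with $\alpha|_{t=0}=0$ (item 1) we have
\[
\|\alpha(t)\|_{L^2(\Om)}^2=2\int_0^t\int_\Om\alpha D_t\alpha\,dxd\tau\leq 2\|\alpha\|_{L^2(Q)}\|D_t\alpha\|_{L^2(Q)}.
\]
This gives $C(0,T;L^2(\Om))$ bounds of order $\De^{1/2}$ once the $W(Q)$--norm of the difference is bounded by $K(N)$. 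Then the first multiplicative inequality \eqref{cor1f5}, applied for each $t$ with $\|D\alpha\|_{L^{2,\infty}(Q)}\leq K$, yields
\[
\|\alpha\|_{C(\bar{Q})}\leq C\|\alpha\|_{C(0,T;L^2(\Om))}^{1/2}\|\alpha\|_{L^\infty(0,T;H^1(\Om))}^{1/2}\leq K\De^{1/4}.
\]
Continuity in $\bar{Q}$ follows since $W(Q)\subset C(\bar{Q})$ in 1D.

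For item 1, take $\alpha=\zeta(\hu-u)$ and $\alpha=\zeta^2(\hth-\theta)$. Both vanish at $t=0$ because $\zeta(0)=0$, and their $L^2(Q)$ norms are at most $\|\zeta\|_{C}K\De\leq K\De$, using $\|\zeta\|_{C[0,T]}\leq T^{1/2}N$. The $\hat{K}(N)$ bounds for the hatted functions are assumed, so the main work is to prove the internal-in-time regularity $\|\zeta u\|_{W(Q)}+\|\zeta^2\theta\|_{W(Q)}\leq K(N)$ for the solution of $\mathcal{P}_m$. I would do this by the standard energy method: multiply the momentum equation by $\zeta^2 D_t u$, or more precisely use $\zeta u$ as a solution of a linear $\mathcal{L}_m$--type problem with zero initial data and source terms $(D_t\zeta)u$ and $\zeta g$. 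Then multiply the energy equation for $\zeta^2\theta$ by $D_t(\zeta^2\theta)$, using the known bounds $K_0^{-1}\leq\eta\leq K_0$, $D_t\eta\in L^2(Q)$, $u,\theta\in V_2(Q)$, $u\in L^\infty(Q)$, the assumptions $\|\zeta^2\bar f\|_{L^2(Q)}\leq N$, the boundary data conditions, and the $L^2$ bound for $\zeta^2\s Du$. The last bound comes from the $\zeta$-weighted $L^{2,\infty}$ bound of $Du$ just derived, together with $\zeta Du\in L^4$ by interpolation.

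I expect this step to be the main obstacle. The nonlinear terms $\s Du$ and $\la\rho D\theta D_t\rho$ require the $\zeta$ powers to be distributed carefully, since $\theta$ carries $\zeta^2$ and $u$ carries $\zeta$. It also requires Gronwall arguments together with Lemma \ref{lem1}-type absorption of terms like $\|\zeta Du\|_{L^{4}}$. Here I would first control $\|\zeta D u\|_{L^{2,\infty}}$ and $\|\zeta D_tu\|_{L^2}$, then feed them into the $\theta$ estimate.

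For item 2, I would repeat the same argument with $\zeta\equiv1$. Now $\alpha(0)=\hu^0-u^0$ is not zero, so I would use $\|\alpha(t)\|^2\leq\|\alpha(0)\|^2+2\|\alpha\|_{L^2}\|D_t\alpha\|_{L^2}$, with $\|\alpha(0)\|_{L^2(\Om)}$ controlled by interpolating between $\|\hu^0-u^0\|_{H^{-1;m}}\leq\De$ and the $H^1$ bounds of the initial data. The global regularity $u,\theta\in W(Q)$ follows from the same energy estimates, now without cutoff. The assumptions $Du^0,D\theta^0\in L^2(\Om)$ and the compatibility conditions $u_0(0)=u^0(0)$, $u_X(0)=u^0(X)$ make the initial terms finite. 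Specifically, they are needed to lift the boundary data so that the lifted initial datum lies in $H^{1;m}$.
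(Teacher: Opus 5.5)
Your interpolation part is the paper's argument. The paper combines the $L^2(Q)$ bounds of Theorem~\ref{th3} with $W(Q)$ bounds through $\|w\|_{C(0,T;L^2(\Om))}\leq C\|w\|_{L^2(Q)}^{1/2}(\|w\|_{L^2(Q)}+\|D_tw\|_{L^2(Q)})^{1/2}$ and $\|w\|_{C(\bar{Q})}\leq C\|w\|_{L^2(Q)}^{1/4}\|w\|_{W(Q)}^{3/4}$, and your item~1 computations reproduce both.

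\textbf{The gap} is the step you yourself flag as the main obstacle. The paper does not prove it: the bounds $\|\zeta u\|_{W(Q)}+\|\zeta^2\theta\|_{W(Q)}\leq K(N)$ and $\|u\|_{W(Q)}+\|\theta\|_{W(Q)}\leq K(N)$ are taken from \cite[Theorems 5.1 and 5.2]{AZRM99}. The extra hypotheses of items 1 and 2 are there precisely to make those theorems applicable.

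Your sketch of a self-contained proof does not address the real difficulty. Because $\eta$ is merely bounded and has no $x$-derivative, consider the pressure contribution $\int\zeta^2p\,D_tDu$ produced by testing the momentum equation with $\zeta^2D_tu$. It cannot be turned into $\int\zeta^2(Dp)D_tu$, since that needs $D\rho$. Integrating by parts in $t$ instead brings in $\zeta^2\rho D_t\theta$. That term is controlled only by the $\theta$-estimate, whose source term $\zeta^2\sigma Du$ in turn needs the $u$-estimate. So you must either estimate $u$ and $\theta$ simultaneously or eliminate $D_t\theta$ through the energy equation. Your plan, which controls $\|\zeta Du\|_{L^{2,\infty}(Q)}$ and $\|\zeta D_tu\|_{L^2(Q)}$ first and only then turns to $\theta$, says nothing about how this term is handled.

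The same problem is hidden in your linear reformulation for $\zeta u$. Its flux is $\nu\rho D(\zeta u)-\zeta p$, so the problem carries $\psi=-\zeta p$, which you dropped. Since $\psi$ has no $x$-derivative, $W(Q)$ regularity for $\mathcal{L}_m$ requires control of $D_t\psi$, i.e.\ again of $D_t\theta$.

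\textbf{Item 2.} Your detour through $\alpha(0)$ is unnecessary and, as sketched, incomplete. Write $\|\alpha(t)\|_{L^2(\Om)}^2=\|\alpha(s)\|_{L^2(\Om)}^2+2\int_s^t\int_\Om\alpha D_t\alpha\,dxd\tau$ with a point $s$ where $\|\alpha(s)\|_{L^2(\Om)}^2\leq T^{-1}\|\alpha\|_{L^2(Q)}^2$. This gives the paper's first inequality without any information at $t=0$, so both items are treated the same way once the $W(Q)$ bounds are available.

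Your version covers only $\hu^0-u^0$, and two things are missing:
\begin{itemize}
\item For $\hth^0-\theta^0$, the quantity $\De$ contains $\|\hat{E}^0-E^0\|_{H^{-1;3}}$, not $\|\hth^0-\theta^0\|_{H^{-1;3}}$. You would additionally have to bound $\|(\hw^0)^2-(w^0)^2\|_{H^{-1;3}}$ by $K\De$.
\item For $m=1,2$, the difference $\hu^0-u^0$ lies in $H^{1;m}$ only after subtracting a lifting of $\mathbf{\hu}_b(0)-\mathbf{u}_b(0)$, so \eqref{cor2f5} does not apply directly.
\end{itemize}
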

\begin{proof}
According to \cite[Theorems 5.1 and 5.2]{AZRM99} (or see \cite{ZSprin17}), under assumptions of Items 1 and 2 (taking into account that $\|\bar{g}\|_{L^2(Q)}\leq N$) the additional regularity of the solution to problem $\mathcal{P}_m$ takes place 
$\|\zeta u\|_{W(Q)}+\|\zeta^2\th\|_{W(Q)}\leq K(N)$ and $\|u\|_{W(Q)}+\|\th\|_{W(Q)}\leq K(N)$, respectively. 

\par Next, the following multiplicative inequalities hold
\begin{gather*}
\|w\|_{C(0,T;L^2(\Om))}
 \leq C\|w\|_{L^2(Q)}^{1/2}(\|w\|_{L^2(Q)}+\|D_tw\|_{L^2(Q)})^{1/2},
\\[1mm]
\|w\|_{C(\bar{Q})}
 \leq C\|w\|_{L^2(Q)}^{1/4}\|w\|_{W(Q)}^{3/4};
\end{gather*}
see the second one in \cite{AZCMMP96a}.
These results allow one to derive bounds \eqref{cor2f1}-\eqref{cor2f4} from Theorem~\ref{th3}.
\end{proof}

\par The bounds in Corollary \ref{th3cor2} demonstrate the effect of parabolic smoothing of the differences $\hu-u$ and $\hth-\theta$ in comparison with the initial differences $\hu^0-u^0$ and $\hth^0-\theta^0$ in spite of non-smoothness in $x$ of $\hat{\eta}$ and $\eta$.
Also, in Item 1, bounds (\ref{cor2f1}) and (\ref{cor2f2}) for $\zeta\equiv 1$ (i.e., bounds (\ref{cor2f3}) and (\ref{cor2f4})) cannot be valid (since, for the initial differences, the norm $\|\cdot\|_{H^{-1;m}}$ is weaker than $\|\cdot\|_{L^2(\Om)}$ and, moreover,  $\|\cdot\|_{C(\bar{\Om})}$).
But in Item 2, for more regular $u^0$ and $\theta^0$ and (implicitly) $\hu^0$ and $\hth^0$, these bounds hold.
This is connected to the validity of multiplicative inequalities
\be
 \|y\|_{L^2(\Om)}
 \leq C\|\alpha\|_{H^1(\Om)}^{1/2}\|y\|_{H^{-1;m}}^{1/2},\ \
 \|y\|_{C(\bar{\Om})}
 \leq C\|y\|_{H^1(\Om)}^{3/4}\|y\|_{H^{-1;m}}^{1/4}\ \
 \text{for}\ \ y\in H^{1;m}.
\label{cor2f5}
\ee
The first of them follows from the equalities
\be
 \int_\Om y^2\,dx
 =\int_\Om yDI^{\lan m\ran}y\,dx+\de_{m3}\lan y\ran_\Omega\int_\Om y\,dx
 =-\int_\Om (Dy)I^{\lan m\ran}y\,dx+\de_{m3}X\lan y\ran_\Om^2
\label{cor2f6}
\ee
for $y\in H^{1;m}$,
and the second one is a consequence of the first inequalities (\ref{cor1f5}) and (\ref{cor2f5}).
Inequalities (\ref{cor2f5}) also indicate a certain sharpness in order of bounds (\ref{cor2f3}) and (\ref{cor2f4}).

\section{\normalsize\rm\hspace{3pt}\bf Bounds for the error of the two-scale homogenization}
\label{sect4}

\noindent\ref{sect4}. 1. \textbf{Auxiliary results on periodic rapidly oscillating functions}. We first give some definitions and recall auxiliary results from papers \cite{AZCMMP96b,AZCMMP98}.
For $0<\de<X$, define the difference and difference quotient in $x$
\[
 \Delta_\de y(x)=y(x+\delta)-y(x),\ \ \Delta_\delta^{(1)}y(x)=\tfrac{1}{\de}\Delta_\de y(x),\ \ 0<x<X-\de.
\]
We need the spaces $L^q(\Om;L^\infty(J))$ for $1\leq q\leq\infty$, with $J:=(0,1)$. 
They are closed subspaces (more precisely, they are isometrically isomorphic to closed subspaces) in the anisotropic Lebesgue spaces $L^{q,\infty}(J\times\Om)$, thus, 
\[
\|w\|_{L^q(\Om;L^\infty(J))}=\|w\|_{L^{q,\infty}(J\times\Om)}:=\|\|w\|_{L^\infty(J)}\|_{L^q(\Omega)}\ \  \text{for}\ \ w\in L^q(\Om;L^\infty(J)).
\]
Moreover (see \cite[Lemma 1]{AZCMMP98}), $w\in L^{q,\infty}(J\times \Om)$ belongs also 
to $L^q(\Om;L^\infty(J))$ if and only if
$\lim_{\delta\to +0}\|\Delta_\de w\|_{L^{\infty,1}(J\times (0,X-\delta))}=0$.

\par We need the notion of the approximative limit ${\rm ap}\,\lim_{\xi\to\xi_0} y(\xi)$ for a function $y$ measurable on $J$ and $\xi_0\in J$, for example, see \cite[Section 5.9]{Zi89}.
Let $x^{(\ve)}:=\{x/\ve-a_\ve\}$ for $x\in\Om$, where the curly brackets $\{\cdot\}$ mean taking the fractional part of a number, and $a_\ve$ is a fixed function a parameter $\ve\in (0,1]$ (in particular, $a_\ve=0$).
Let $\lan b\ran=\int_Jb(\xi)\,d\xi$ be the mean value over $J$.
\begin{proposition}
\label{prop on w(ve)}
1. For a strongly measurable function $w$: $\Om\to L^\infty(J)$,  the composition 
\be
w^{(\ve)}(x):={\rm ap}\,\lim\nolimits_{\xi\to x^{(\ve)}} w(\xi,x)
\label{app limit}
\ee
is defined almost everywhere on $\Om$, is measurable on $\Om$ and satisfies the bound
$|w^{(\ve)}(x)|\leq\|w(\cdot,x)\|_{L^\infty(J)}$ for almost all $x\in\Om$.

\smallskip\par 2. Let  $w\in L^q(\Om;L^\infty(J))$ for some $1\leq q\leq\infty$. Then $w^{(\ve)}\in L^q(\Om)$ and the properties hold
\begin{gather}
\|w^{(\ve)}\|_{L^q(\Omega)}\leq \|w\|_{L^q(\Om;L^\infty(J))},
\label{wve in Lq}\\
w^{(\ve)}\to \lan w\ran\ \text{weakly in}\ L^q(\Om)\ \ \text{for}\ 1\leq q<\infty,\ \ \text{or weakly-star in}\ L^\infty(\Om)\ \text{for}\  q=\infty.
\nonumber
\end{gather}
\end{proposition}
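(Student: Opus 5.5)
The plan is to prove Item 1 first for simple functions and then pass to the general case by density, using the uniform bound to control the limit. For a simple strongly measurable $w=\sum_{i}\chi_{A_i}(x)b_i(\xi)$, with measurable $A_i\subset\Om$ disjoint and $b_i\in L^\infty(J)$, I would first handle a single $b\in L^\infty(J)$. By the Lebesgue density theorem, for almost every $\xi_0\in J$ the value ${\rm ap}\,\lim_{\xi\to\xi_0}b(\xi)$ exists and equals $b(\xi_0)$ (for a fixed representative), and it satisfies $|{\rm ap}\,\lim b|\le\|b\|_{L^\infty(J)}$. Let $Z\subset J$ be the null set where this fails. The set $\{x\in\Om:\ x^{(\ve)}\in Z\}$ is null, because $x\mapsto x/\ve-a_\ve$ is an affine bijection and the fractional part maps null sets back to null sets on each period cell (there are finitely many cells). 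Hence $b^{(\ve)}(x)=b(x^{(\ve)})$ a.e., which is measurable as a composition of a Borel representative with a Borel map. Summing over $i$ gives Item 1 for simple $w$.

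For general $w$, I would take simple $w_n\to w$ with $\|w_n(\cdot,x)-w(\cdot,x)\|_{L^\infty(J)}\to0$ for a.e. $x$, which exist by strong measurability. For a.e. $x$ and every $n,k$, the approximate limit is linear and $1$-Lipschitz with respect to the $L^\infty(J)$ norm wherever it exists. Indeed, for fixed $x$ the function $w(\cdot,x)-w_n(\cdot,x)$ is bounded by $\|w-w_n\|_{L^\infty(J)}$ off a null set, so its approximate $\limsup$ and $\liminf$ at any point are bounded by that quantity. The approximate $\limsup$ and $\liminf$ of $w(\cdot,x)$ at $x^{(\ve)}$ then differ by at most $2\|w-w_n\|_{L^\infty(J)}\to0$, so the approximate limit exists. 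It equals $\lim_n w_n^{(\ve)}(x)$, which gives measurability, and the pointwise bound passes to the limit. The delicate point is the exceptional null set: the existence of ${\rm ap}\,\lim w_n(\cdot,x)$ at $x^{(\ve)}$ is known only off a null set of $x$ for each $n$, and a countable union keeps it null. This is the step I expect to need the most care.

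Item 2 follows. The bound \eqref{wve in Lq} comes directly from integrating the pointwise bound. For weak convergence, by the uniform bound \eqref{wve in Lq} it suffices to test against a dense set and to approximate $w$ in $L^q(\Om;L^\infty(J))$ by simple functions. For $q=\infty$, I would test against $L^1$ functions. For simple $w$, the problem reduces to $\chi_A(x)b(x^{(\ve)})$ tested against $\vp\in L^{q'}$, that is, to the classical fact that $b(\{x/\ve-a_\ve\})\to\lan b\ran$ weakly-star in $L^\infty(\Om)$. I would prove that by testing against indicators of intervals and counting full periods, with an $O(\ve)$ boundary error. A $3\varepsilon$ argument then combines the density approximation with the uniform bound to finish.
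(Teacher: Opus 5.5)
The paper does not prove this proposition; it quotes it from the work of Amosov and of Amosov--Zlotnik. So your proposal has to stand on its own, and for the most part it does.

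Item~1 is handled correctly. For finitely-valued $w$, a.e.\ existence follows from two facts: each $b_i$ is approximately continuous a.e., and $x\mapsto x^{(\varepsilon)}$ is piecewise affine with slope $1/\varepsilon$, so preimages of null sets are null. The passage to general $w$ via ${\rm ap}\limsup-{\rm ap}\liminf\le 2\|w(\cdot,x)-w_n(\cdot,x)\|_{L^\infty(J)}$ is right, and only countably many exceptional null sets arise. This is exactly where strong measurability is needed. For $w(x)={\rm sign}(\cdot-x^{(\varepsilon)})$ the jump travels with $x^{(\varepsilon)}$, so the approximate limit fails to exist for a.e.\ $x$; but this map is not essentially separably valued. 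Item~2 for $1\le q<\infty$ is also fine.

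The case $q=\infty$ fails as written. Finitely-valued simple functions are \emph{not} dense in $L^\infty(\Omega;L^\infty(J))$. Take disjoint intervals $A_k\subset\Omega$ and $e_k=\chi_{(2^{-k},2^{-k+1})}$, so that $\|e_k-e_j\|_{L^\infty(J)}=1$ for $k\neq j$. Then $w=\sum_k\chi_{A_k}e_k$ is strongly measurable and bounded. By pigeonhole, no finitely-valued $s$ satisfies $\|w-s\|_{L^\infty(\Omega;L^\infty(J))}<1/2$, so your $3\varepsilon$ argument cannot start.

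The repair is to put the density on the test functions rather than on $w$. Since $\Omega$ is bounded, $w\in L^1(\Omega;L^\infty(J))$. Your $q=1$ case therefore gives $\int_\Omega w^{(\varepsilon)}\varphi\,dx\to\int_\Omega\langle w\rangle\varphi\,dx$ for every $\varphi\in L^\infty(\Omega)$. The uniform bound $\|w^{(\varepsilon)}\|_{L^\infty(\Omega)}\le\|w\|_{L^\infty(\Omega;L^\infty(J))}$, together with density of $L^\infty(\Omega)$ in $L^1(\Omega)$, extends this to all $\varphi\in L^1(\Omega)$. Alternatively, approximate $w$ uniformly by countably-valued functions and sum the resulting series with dominated convergence.
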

See these results in \cite{A97} and \cite[Theorem 1]{AZCMMP98}.
Recall the delicate moment that considered $w$ may not satisfy the well-known Carath\'{e}odory condition.

\par The next space and respective lemma were given (in a more general form) in \cite{AZCMMP98} (see also \cite{AZCMMP96a}).
Let the linear space $L^2L^\infty C(\mathbb{R}\times J\times\Omega)$ consist of functions $F$ such that:
\par (1) $F$ is measurable on $\mathbb{R}\times J\times\Omega$ and $F(\chi,\cdot)\in L^2(\Om;L^\infty(J))$ for all $\chi\in\mathbb{R}$;
\par (2) $F(\chi,\xi,x)$ is continuous in $\chi$  for almost all $(\xi,x)\in J\times\Om$, and, moreover, for any $a>1$
\[
\sup_{\xi\in J}\,\max_{\chi,\chi'\in [-a,a],\, |\chi-\chi'|\leq\alpha}|F(\chi,\xi,x)-F(\chi',\xi,x)|\to 0\ \ \text{as}\ \ \alpha\to+0;
\]

\par (3) for any $a>1$, the following two properties hold
\[
\|F\|_{L^2L^\infty C([-a,a]\times J\times\Omega)}
:=\|\|F\|_{C[-a,a]}\|_{L^{\infty,2}(J\times\Omega)}<\infty,\ \
\lim\limits_{\de\to +0}\|\Delta_\de F\|_{L^2L^\infty C([-a,a]\times J\times(0,X-\de))}\to 0.
\]
\begin{lemma}
\label{propsuper}
Let $F\in L^2L^\infty C(\mathbb{R}\times J\times\Omega)$,
$w\in L^\infty(\Om;L^\infty(J))$ and $|w|\leq a$ for some $a>1$.
Then the composition of $F$ and $w$ has the properties
\[
F[w](\xi,x):=F(w(\xi,x),\xi,x)\in L^2(\Om;L^\infty(J)),\ \ 
\|F[w]\|_{L^2(\Om;L^\infty(J))}\leq \|F\|_{L^2L^\infty C([-a,a]\times J\times\Omega)}.
\]
\end{lemma}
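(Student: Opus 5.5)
Fix $\chi\in[-a,a]$-valued $w$. The plan is to show three things: that $F[w](\cdot,x)$ is a well-defined measurable function bounded by $\|F(\cdot,\cdot,x)\|_{C[-a,a]}$, which yields the norm bound; that $F[w]$ is strongly measurable as a map $\Om\to L^\infty(J)$; and that it lies in the closed subspace $L^2(\Om;L^\infty(J))$. For the last point I will use the criterion from \cite[Lemma 1]{AZCMMP98}, namely $\lim_{\de\to+0}\|\Delta_\de F[w]\|_{L^{\infty,1}(J\times(0,X-\de))}=0$ together with membership in $L^{2,\infty}(J\times\Om)$.

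\emph{Measurability and the bound.} By property (2), $F$ is a Carath\'eodory-type function on $\mathbb{R}\times(J\times\Om)$: it is measurable in $(\xi,x)$ (from (1)) and continuous in $\chi$ for a.e. $(\xi,x)$. Since $w$ is measurable on $J\times\Om$, the superposition $F(w(\xi,x),\xi,x)$ is measurable. Pointwise, $|F[w](\xi,x)|\le\|F(\cdot,\xi,x)\|_{C[-a,a]}$ because $|w|\le a$. Taking ${\rm ess\,sup}_\xi$ and then the $L^2(\Om)$ norm gives
\[
\|F[w]\|_{L^{\infty,2}(J\times\Om)}\le\|F\|_{L^2L^\infty C([-a,a]\times J\times\Om)}.
\]
Once membership in $L^2(\Om;L^\infty(J))$ is known, this is the claimed bound, since the two norms coincide on that subspace.

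\emph{Continuity of shifts (main step).} Write
\[
\Delta_\de F[w](\xi,x)=\bigl[F(w(\xi,x+\de),\xi,x+\de)-F(w(\xi,x+\de),\xi,x)\bigr]+\bigl[F(w(\xi,x+\de),\xi,x)-F(w(\xi,x),\xi,x)\bigr].
\]
The first bracket is bounded by $\|\Delta_\de F(\cdot,\xi,x)\|_{C[-a,a]}$. Its $L^{\infty,1}$ norm is therefore at most $X^{1/2}\|\Delta_\de F\|_{L^2L^\infty C}$, which tends to $0$ by (3). For the second bracket, let $\omega_x(\alpha)=\sup_\xi\max_{|\chi-\chi'|\le\alpha,\ \chi,\chi'\in[-a,a]}|F(\chi,\xi,x)-F(\chi',\xi,x)|$. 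For fixed $\alpha>0$, split according to whether $|\Delta_\de w(\xi,x)|\le\alpha$ or not:
\[
\bigl|F(w(\xi,x+\de),\xi,x)-F(w(\xi,x),\xi,x)\bigr|\le\omega_x(\alpha)+2\|F(\cdot,\xi,x)\|_{C[-a,a]}\,\alpha^{-1}|\Delta_\de w(\xi,x)|.
\]
Take ${\rm ess\,sup}_\xi$ and integrate in $x$. The first term gives $\int_\Om\omega_x(\alpha)\,dx$, which tends to $0$ as $\alpha\to0$ by dominated convergence, with dominant $2\|F(\cdot,\cdot,x)\|_{L^\infty C}\in L^2\subset L^1$. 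The second term is $\le 2\alpha^{-1}\|F\|_{L^2L^\infty C}\,\|\Delta_\de w\|_{L^{\infty,2}}$. However, $w\in L^\infty(\Om;L^\infty(J))$ only gives $\|\Delta_\de w\|_{L^{\infty,1}}\to0$, and this product needs $L^2$ in $x$ of the shift of $w$.

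This mismatch is where I expect the difficulty. I would handle it by interpolation: since $|\Delta_\de w|\le 2a$, we have $\|\Delta_\de w\|_{L^{\infty,2}}\le(2a)^{1/2}\|\Delta_\de w\|_{L^{\infty,1}}^{1/2}\to0$. Choosing $\alpha$ first and then $\de$ small, the second bracket also tends to $0$ in $L^{\infty,1}$. The criterion then yields $F[w]\in L^2(\Om;L^\infty(J))$, since $F[w]\in L^{2,\infty}\subset L^{1,\infty}$, and the norm bound follows as above.
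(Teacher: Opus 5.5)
Your argument is correct. The paper gives no proof of this lemma; it quotes it from \cite{AZCMMP98}. Your route matches what the definition of $L^2L^\infty C$ is built for. You get the norm from the pointwise bound $|F[w]|\le\|F(\cdot,\xi,x)\|_{C[-a,a]}$. You get membership from the shift criterion of \cite[Lemma 1]{AZCMMP98}, by splitting $\Delta_\de F[w]$ into two parts. The $x$-shift of $F$ is handled by property (3). The $\chi$-variation is handled by property (2) together with $\|\Delta_\de w\|_{L^{\infty,2}}\le(2a)^{1/2}\|\Delta_\de w\|_{L^{\infty,1}}^{1/2}\to 0$.

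One small point of rigor: define $\omega_x(\alpha)$ with ${\rm ess\,sup}_\xi$ instead of $\sup_\xi$. Then it is measurable in $x$ and dominated by $2\,{\rm ess\,sup}_\xi\|F(\cdot,\xi,x)\|_{C[-a,a]}\in L^2(\Om)$, so dominated convergence applies cleanly.
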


\par Finally, let $WH^{0,1;1}(J\times\Om)$ and $WH^{0,1,0;1,1,r}(J\times Q)$, $1\leq r\leq\infty$, be the spaces of functions
$y\in L^{1,\infty}(J\times\Om)$ and
$w\in L^{1,\infty,r}(J\times Q)$ having the finite norms
\begin{gather*}
 \|y\|_{WH^{0,1;1}(J\times\Om)}
 =\|y\|_{L^{1,\infty}(J\times\Om)}+\sup_{0<\delta<X}\|\Delta_\delta^{(1)}y\|_{L^1(J\times (0,X-\delta))},
\\[1mm]
 \|w\|_{WH^{0,1,0;1,1,r}(J\times Q)}
 =\|w\|_{L^{1,\infty,r}(J\times Q)}+\sup_{0<\delta<X}\|\Delta_\delta^{(1)}w\|_{L^{1,1,r}(J\times Q^\de)},\
 Q^\de:=(0,X-\delta)\times (0,T);
\end{gather*}
here we once again use the anisotropic Lebesgue spaces.
The spaces are closely related to the spaces of functions with essentially bounded variation in $x$ (for example, see \cite{EG15,K76}) and contain functions with discontinuities of a very general kind, in particular, see \cite[Ch. 10]{ABM06}, thus they are much broader than their simpler subspaces of functions such that $y,Dy\in L^1(J\times\Omega)$ and $w,Dw\in L^{1,1,r}(J\times Q)$ which cannot be discontinuous in $x$. 
The notation of spaces follows traditions in theory of functions \cite{N75}.

\par The next result slightly generalizes \cite[Propositions 2 and 3]{AZCMMP96b}, with the same proof.
It is essential below and illuminates the role of using the dual spaces $H^{-1;m}$ in Theorem \ref{th1}.
Let $R_\ve y:=y^{(\ve)}-\lan y\ran$ be the averaging error.
\begin{proposition}
\label{propeps}
1. For $y\in L^1(\Om;L^\infty(J))\cap WH^{0,1;1}(J\times\Om)$, the following bound holds
\[
 \|R_\ve y\|_{H^{-1;3}}\leq C\|IR_\ve y\|_{C(\bar{\Omega})}\leq 2C\ve\|y\|_{WH^{0,1;1}(J\times\Om)}.
\]

\par 2. Let $1\leq r\leq\infty$. For $w\in WH^{0,1,0;1,1,r}(J\times Q)$ such that $w(\cdot,t)\in L^1(\Om;L^\infty(J))$ for almost all $0<t<T$, the following bound holds
\[
 \|R_\ve w\|_{L^r(0,T;\,H^{-1;3})}\leq C\|IR_\ve w\|_{L^r(0,T;\,C(\bar{\Omega}))}\leq 2C\ve\|w\|_{WH^{0,1,0;1,1,r}(J\times Q)}.
\]
\end{proposition}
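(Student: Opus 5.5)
The proof reduces to estimating the primitive $IR_\ve y$ (resp. $IR_\ve w$) uniformly in $x$. The first inequality in each item is immediate from the definition of the norm in $H^{-1;3}$. Indeed, $\|R_\ve y\|_{H^{-1;3}}=\|IR_\ve y\|_{L^2(\Om)}+X|\lan R_\ve y\ran_\Om|$. Since $X\lan R_\ve y\ran_\Om=IR_\ve y(X)$, both terms are bounded by $C\|IR_\ve y\|_{C(\bar\Om)}$ with $C=\sqrt X+1$. For Item 2 the same pointwise-in-$t$ inequality is then taken in $L^r(0,T)$. So the real content is the bound $\|IR_\ve y\|_{C(\bar\Om)}\leq 2\ve\|y\|_{WH^{0,1;1}}$.

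For fixed $x\in\bar\Om$, I split $(0,x)$ into the full cells $[\ve(k+a_\ve),\ve(k+1+a_\ve))$ lying inside $(0,x)$, plus at most two incomplete boundary pieces. On each incomplete piece, of length at most $\ve$, I bound $\int|y^{(\ve)}|+|\lan y\ran|$ by $\ve$ times a sup-type quantity. More precisely, $|y^{(\ve)}(x')|\leq\|y(\cdot,x')\|_{L^\infty(J)}$ by Proposition~\ref{prop on w(ve)}. To get $\ve\|y\|_{L^{1,\infty}}$-type control, however, I need the $x$-variation part as well. The idea is to compare $y(\xi,x')$ with $y(\xi,x'')$ for $x''$ in a neighbouring window via the difference quotients.

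On each full cell $K$, I use the change of variables $\xi=x'/\ve-a_\ve-k$. This gives $\int_K y^{(\ve)}(x')\,dx'=\ve\int_J y(\xi,\ve(\xi+k+a_\ve))\,d\xi$, while $\int_K\lan y\ran(x')\,dx'=\int_K\int_J y(\xi,x')\,d\xi\,dx'$. Their difference is therefore an average over $\delta\in(0,\ve)$ (up to a cyclic shift) of $\int_J[y(\xi,x_\xi)-y(\xi,x_\xi\pm\delta)]\,d\xi$. Summing over the cells, and using Fubini in $(\xi,\delta)$, the total is bounded by $\int_0^\ve\|\Delta_\delta y\|_{L^1(J\times(0,X-\delta))}\,d\delta/\ve$. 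This is at most $\ve\sup_\delta\|\Delta^{(1)}_\delta y\|_{L^1}$. Two points need care here: the approximative-limit definition of $y^{(\ve)}$, which requires the cell identity to hold a.e.\ and may be handled by approximating $y$ by functions continuous in $x$ with values in $L^\infty(J)$ using the characterization from \cite[Lemma 1]{AZCMMP98}; and treating the one-sided shift when $x_\xi+\delta$ leaves $\Om$.

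The boundary pieces then contribute at most $\ve(\|y\|_{L^{1,\infty}}+\ldots)$ after a similar averaging over a window of length $\ve$. This gives the factor $2$ and the claimed bound with the $WH^{0,1;1}$ norm. For Item 2 I apply Item 1 for a.e.\ $t$ and take the $L^r(0,T)$ norm. Minkowski's inequality moves the $\sup_\delta$ outside, giving a bound by $\sup_\delta\|\Delta^{(1)}_\delta w\|_{L^{1,1,r}}$ plus $\|w\|_{L^{1,\infty,r}}$. This requires the averaged representation above to be linear in $w$ before the supremum is taken, which it is.

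The main obstacle is making the cell identity rigorous for merely measurable $y$ with approximative limits. I would first prove the estimate for $y$ piecewise constant in $x$ (simple $L^\infty(J)$-valued functions), where the computation is explicit. I would then pass to the limit using the bound (\ref{wve in Lq}) and the density furnished by strong measurability, noting that both sides of the estimate are controlled by the norms involved.
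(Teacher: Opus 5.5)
Your first inequalities are correct: $X\lan R_\ve y\ran_\Om=IR_\ve y(X)$ gives the constant $\sqrt X+1$. The paper itself does not reprove the statement; it refers to Propositions 2 and 3 of the earlier barotropic paper. The genuine gap is your central Fubini step, which fails.

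\textbf{Why the Fubini step fails.} After the change of variables on a full cell, the sampled values $y(\xi,\ve(\xi+k+a_\ve))$ lie on the segments $x=\ve(\xi+k+a_\ve)$, which form a null set in $J\times\Om$. Summing over $k$ and integrating in $(\xi,\delta)$ therefore integrates $y(\xi,x_\xi)-y(\xi,x_\xi\pm\delta)$ only over a two-parameter family of (point, shift) pairs. Such an integral cannot be dominated by the three-dimensional integral $\int_0^\ve\int_J\int_\Om|\Delta_\delta y|\,dx\,d\xi\,d\delta$.

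Your intermediate bound is in fact false. Take $a_\ve=0$, $x_0=\ve(k_0+s)$ with $\ve\le x_0\le X-\ve$, and $y(\xi,x)=\mathbf{1}_{(s,1)}(\xi)\mathbf{1}_{(x_0,X)}(x)$. This $y$ is piecewise constant in $x$, so it lies in the very class you want to start from. Only the cell $K_0=[\ve k_0,\ve(k_0+1))$ contributes, and
\[
IR_\ve y(\ve(k_0+1))=\ve(1-s)-\ve(1-s)^2=\ve s(1-s),\qquad \frac1\ve\int_0^\ve\|\Delta_\delta y\|_{L^1(J\times(0,X-\delta))}\,d\delta=\frac{\ve}{2}(1-s).
\]
So your claimed inequality fails for $s>1/2$.

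\textbf{Consequences for the rest of the argument.} Your Item 2 argument (Minkowski applied to the $\delta$-averaged representation) rests on the same formula, so it collapses too. Applying Item 1 at a.e.\ $t$ and then taking the $L^r(0,T)$-norm leaves $\sup_\delta$ inside the time norm, which is the wrong direction. In addition, ``density furnished by strong measurability'' only gives convergence in $L^1(\Om;L^\infty(J))$. It does not control $\sup_\delta\|\Delta^{(1)}_\delta y_n\|$ for generic simple approximants.

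\textbf{What is needed.} You must control the diagonal samples $\xi$ by $\xi$ through the $x$-variation of $y(\xi,\cdot)$, and use specific regularized approximants. For example, take Steklov averages $y_h(\xi,x)=h^{-1}\int_x^{x+h}y(\xi,x')\,dx'$, extended constantly near $x=X$. Then $\|y_h\|_{L^{1,\infty}(J\times\Om)}\le\|y\|_{L^{1,\infty}(J\times\Om)}$, $\|Dy_h\|_{L^1(J\times\Om)}=\|\Delta^{(1)}_h y\|_{L^1(J\times(0,X-h))}$, and $y_h\to y$ in $L^1(\Om;L^\infty(J))$.

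Introduce the $1$-periodic primitive $Y_h(\xi,x)=\int_0^\xi\big(y_h(\{\eta\},x)-\lan y_h\ran(x)\big)\,d\eta$. It satisfies $|Y_h(\xi,x)|\le\|y_h(\cdot,x)\|_{L^1(J)}$ and $|D_xY_h(\xi,x)|\le\|Dy_h(\cdot,x)\|_{L^1(J)}$, together with the identity
\[
IR_\ve y_h(x)=\ve\big[Y_h(\tfrac{x}{\ve}-a_\ve,x)-Y_h(-a_\ve,0)\big]-\ve\int_0^x(D_xY_h)(\tfrac{x'}{\ve}-a_\ve,x')\,dx'.
\]
This gives $\|IR_\ve y_h\|_{C(\bar\Om)}\le 2\ve\|y\|_{L^{1,\infty}(J\times\Om)}+\ve\|\Delta^{(1)}_h y\|_{L^1(J\times(0,X-h))}$. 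Letting $h\to 0$ and using \eqref{wve in Lq} yields Item 1. Because $h$ is fixed in this bound, applying it at a.e.\ $t$ and then taking the $L^r(0,T)$-norm gives Item 2 with the supremum outside.

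If you prefer to keep your cell decomposition, the same effect follows by comparing, for each fixed $\xi$, the sample with the cell average of $y(\xi,\cdot)$. The error is at most $\ve$ times the variation of $y(\xi,\cdot)$ on the cell. Use cell averages on a uniform mesh as approximants. This yields the full $\ve\sup_\delta\|\Delta^{(1)}_\delta y\|$, not half of it.
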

The first inequalities in both Items 1 and 2 are simple and are added for convenience.

\medskip\par\noindent \ref{sect4}. 2. 
\textbf{The Navier-Stokes equations with the discontinuous rapidly oscillating data and the Bakhvalov--Eglit two-scale homogenized equations.} 
In contrast to the previous section, now we begin with the initial functions
$\eta^0,u^0\in L^\infty(\Om;L^\infty(J))$ and
$\th^0\in L^2(\Om;L^\infty(J))$ depending also on $\xi\in J$. 
We assume that $N^{-1}\leq\eta^0$ and $0<\th^0$ on $J\times\Omega$ and $\ln\th^0\in L^1(\Om;L^\infty(J))$ as well as the bounds
\begin{gather}
\|\eta^0\|_{L^\infty(\Om;L^\infty(J))}+\|u^0\|_{L^\infty(\Om;L^\infty(J))}
+\|\th^0\|_{L^2(\Om;L^\infty(J))}+\|\ln\th^0\|_{L^1(\Om;L^\infty(J))}\leq N,
\label{cond on init data with xi}\\[1mm]
N^{-1}\leq\|\eta^0(\xi,\cdot)\|_{L^1(\Om)}+I_t(u_X-u_0)\ \ \text{on}\ \ J\times (0,T)\ \ (m=1)
\nonumber
\end{gather}
are valid.
The assumptions on $\ln\th^0$ can be omitted provided that $  N^{-1}\leq\th^0$ on $J\times\Omega$.

\par We also take the term $g$ with the properties: 
\par (1) $g$ is measurable on $\mathbb{R}\times J\times Q$ and $|g(\chi,\xi,x,t)|\leq\bar{g}(x,t)$, with 
$\|\bar{g}\|_{L^2(Q)\cap L^{\infty,1}(Q)}\leq N$; \par (2)
$g(\cdot,t)\in L^2L^\infty C(\mathbb{R}\times J\times\Omega)$ for almost all $0<t<T$; 

\par (3) $g$ is the Lipschitz-type continuous in $\chi$:
\be
|g(\chi,\xi,x,t)-g(\chi',\xi,x,t)|\leq C_1(a)\bar{g}_1(x,t)|\chi-\chi'|\ \ \text{for}\ \ \chi,\chi'\in [-a,a], (\xi,x,t)\in J\times Q, 
\label{dchig2}
\ee
for any $a>1$, with $\|\bar{g}_1\|_{L^1(Q)}\leq N$.
Let $f\geq 0$ be a term with the same properties as $g$ and $\bar{f}$ in the role of $\bar{g}$.

\par Recall also conditions $(C_3)$ excluding \eqref{meta} on the boundary data which do not depend on $\xi$.

\par Now we turn to problem $\mathcal{P}_m$, but with the rapidly oscillating initial data
$(\eta^{0,\ve},u^{0,\ve},\theta^{0,\ve},x_e^{0,\ve})$ in the role of $(\eta^0,u^0,\theta^0,x_e^0)$ as well as functions
$g^{(\ve)}$ and $f^{(\ve)}$ in the role of $g$ and $f$; here $x_e^{0,\ve}:=I\eta^{0,\ve}$ and $y^{0,\ve}=(y^0)^{(\ve)}$ for $y^0=\eta^0,u^0,\th^0,x_e^0$, see definition \eqref{app limit}. 
We denote this problem by $\mathcal{P}_m^{(\ve)}$ and its solution by  $(\eta_\ve,u_\ve,\theta_\ve,x_{e,\ve})$.
According to the above assumptions on $\eta^0,u^0$ and $\th^0$ and Proposition~\ref{prop on w(ve)} we have that $\eta^{0,\ve},u^{0,\ve}\in L^\infty(\Omega)$, $\theta^{0,\ve}\in L^2(\Omega)$, $\ln\theta^{0,\ve}\in L^1(\Omega)$ and the bounds hold
\begin{gather}
 N^{-1}\leq\eta^{0,\ve}\ \text{on}\ \Om,\ \ 
\|\eta^{0,\ve}\|_{L^\infty(\Omega)}+\|u^{0,\ve}\|_{L^\infty(\Omega)}+\|\theta^{0,\ve}\|_{L^2(\Omega)}+\|\ln\theta^{0,\ve}\|_{L^1(\Omega)}\leq N,
\label{idvebounds}\\[1mm]
N^{-1}\leq\|\eta^{0,\ve}\|_{L^1(\Om)}+I_t(u_X-u_0)\ \ \text{on}\ \ (0,T)\ \ (m=1).
\end{gather}
Due to the above assumptions on $g$,  Proposition~\ref{prop on w(ve)} and Lemma \ref{propsuper} we get that $g^{(\ve)}(\chi,x,t)$ is measurable on $\mathbb{R}\times Q$, such that there $|g^{(\ve)}(\chi,x,t)|\leq\bar{g}(x,t)$ and
satisfies the Lipschitz-type condition in~$\chi$:
\[
|g^{(\ve)}(\chi,x,t)-g^{(\ve)}(\chi',x,t)|\leq C_1(a)\bar{g}_1(x,t)|\chi-\chi'|\ \ \text{for}\ \ \chi,\chi'\in [-a,a], (x,t)\in Q, 
\]
for any $a>1$.
Here $\bar{g}$ and $\bar{g}_1$ are the same as above for $g$.
Due to the Rademacher-type theorem, for example, see \cite[Theorem 4.5]{EG15}, the last condition is equivalent to the condition on the Sobolev derivative
\[
 |D_\chi g^{(\ve)}(\chi,x,t)|\leq C_1(a)\bar{g}_1(x,t)\ \ \text{for}\ \ \chi\in [-a,a], (x,t)\in Q, 
\]
for any $a>1$.
The same properties are valid for~$f^{(\ve)}$.

\par We also introduce \textit{the Bakhvalov-Eglit two-scale homogenized system of equations}
\begin{gather}
D_t\eta=\tfrac{1}{\nu}\hsi\eta+\tfrac{k}{\nu}\hth\ \ \text{in}\ \ J\times Q,
\label{eq1g}\\[1mm]
D_t\hu=D\hsi+\lan g\ran[\hx_e],
\label{eq2g}\\[1mm]
c_VD_t\hth=D\hpi+\hsi D\hu+\lan f\ran[\hx_e],
\label{eq3g}\\[1mm]
D_t\hx_e=\hu,\ \
\hsi=\nu\hrho D\hu-\hp,\ \ \hrho=\lan\eta\ran^{-1},
\ \ \hp=k\hrho\hth,\ \ \hpi=\la\hrho D\hth,
\label{eq5g}
\end{gather}
with the solution $\eta(\xi,x,t)>0$, $\hu(x,t)$, $\hth(x,t)>0$ and $\hx_e(x,t)$; equations \eqref{eq2g}--\eqref{eq5g} are posed in $Q$.
We supplement the equations with the initial conditions
\begin{gather}
\eta|_{t=0}=\eta^0\ \ \text{in}\ \ J\times\Om,\ \ (\hu,\hth,\hx_e)|_{t=0}=(\hu^0,\hth^0,\hx_e^0)
\label{ichg}
\end{gather}
and the previous boundary conditions \eqref{bch}--\eqref{bc4h}
with the same boundary data $\mathbf{\hu}_b=\mathbf{u}_b$, $\mathbf{\hp}_b=\mathbf{p}_b$ and $\bm{\hpi}_b=\bm{\pi}_b$ as in problem $\mathcal{P}_m^{(\ve)}$.
Here the initial data are homogenized according to the formulas
\[
 \hu^0=\lan u^0\ran,\ \ \hat{e}^0:=\lan e^0\ran=\half\lan(\hu^0)^2\ran+c_V\hth^0, \ \
\ \ \hx_e^0:=I\lan\eta^0\ran,
\]
where $e^0:=\half(u^0)^2+c_V\th^0$. 
Thus, the averaging of $e^0$ rather than $\th^0$ is applied, and then explicitly we have
\be
 \hth^0=\tfrac{1}{c_V}\big(\hat{e}^0-\half\lan(\hu^0)^2\ran\big)=\tfrac{1}{2c_V}\lan(u^0-\lan u^0\ran)^2\ran+\lan\th^0\ran\geq \lan\th^0\ran>0.
\label{hth 0}
\ee
If $N^{-1}\leq\th^0$, then $N^{-1}\leq\hth^0$ as well. 
Notice that this problem is called two-scale (or quasi-averaged) since the variable $\xi$ is not completely eliminated from it.

\par But the last circumstance is not essential in the homogeneous gas case (with constant $\nu$, $k$, $c_V$ and $\lambda$) studied in this paper. 
Indeed, clearly formulas \eqref{eq5g} imply
\be
\lan\tfrac{1}{\nu}\hsi\eta+\tfrac{k}{\nu}\hth\ran=\tfrac{1}{\nu}\hsi\lan\eta\ran+\tfrac{k}{\nu}\hth
=D\hu.
\label{eq6g}
\ee
Therefore, averaging of equation \eqref{eq1g} in $\xi$ gives
\be
D_t\lan\eta\ran=D\hu.
\label{eq1g1}
\ee
Thus the functions $(\lan\eta\ran,\hu,\hth,\hx_e)$ satisfy the IBVP $\lan\mathcal{P}\ran_m$ for the closed system of equations \eqref{eq1g1} and \eqref{eq2g}--\eqref{eq5g}, with the initial conditions
\[
(\lan\eta\ran,\hu,\hth,\hx_e)|_{t=0}=(\lan\eta^0\ran,\hu^0,\hth^0,\hx_e^0)
\]
and the boundary conditions \eqref{bch}--\eqref{bc4h}.
This IBVP is simply of type $\mathcal{P}_m$ that is important below.

\par Moreover, if the solution to the IBVP $\lan\mathcal{P}\ran_m$ is found, then clearly the function $\eta=\eta(\xi,x,t)$ can be expressed by the explicit formula
\be
\eta=\hat{B}\big[\eta^0+\tfrac{k}{\nu}I_t(\hat{B}^{-1}\hth)\big]\ \ \text{in}\ \ J\times Q,\ \  \text{with}\ \ \hat{B}:=e^{\tfrac{1}{\nu}I_t\hsi},
\label{expforeta}
\ee
which appears by treating equation \eqref{eq1g} as an ODE in $t$.
This formula implies the properties
\be
(K_0(N))^{-1}\leq\eta\leq K_0(N)\ \ \text{in}\ \ J\times Q,\ \ 
\eta\in C(0,T;L^\infty(\Om;L^\infty(J)))
\label{eta in CLL}
\ee
provided that $\|I_t\hsi\|_{ L^{\infty,1}(Q)}+\|\hth\|_{ L^{\infty,1}(Q)}\leq K(N)$, see \cite{AZCMMP98} concerning the latter property.
The bound for $I_t\hsi$ and $\hth$ will be guaranteed below.

\par According to the above assumptions on $\eta^0,u^0$ and $\th^0$ and formula \eqref{hth 0} for $\hth^0$ we have 
\begin{gather*}
N^{-1}\leq\lan\eta^0\ran\,\ \text{in}\,\ Q,\ \ 
\|\lan\eta^0\ran\|_{L^\infty(\Om)}+\|\lan u^0\ran\|_{L^\infty(\Om)}+\|\lan \th^0\ran\|_{L^2(\Om)}\leq N,\ \ 
\|\hth^0\|_{L^2(\Om)}\leq \tfrac{N^2}{2c_V}\sqrt{X}+N,
\\[1mm]
N^{-1}\leq\|\lan\eta^0\ran\|_{L^1(\Om)}+I_t(u_X-u_0)\ \ \text{on}\ \ (0,T)\ \ (m=1)  
\end{gather*}
since clearly $\hth^0\leq\tfrac{1}{2c_V}\|u^0\|_{L^\infty(J)}^2+\lan\th^0\ran$ on $\Om$. 
Using also the Jensen inequality, we get 
\[
-\|\ln\th^0\|_{L^\infty(J)}\leq\ln\hth^0\leq \tfrac{1}{2c_V}\|u^0\|_{L^\infty(J)}^2+\lan\th^0\ran\ \ \text{on}\ \ \Om 
\]
and therefore
\[
\|\ln\hth^0\|_{L^1(\Om)}\leq K(N).
\]
Also $\lan g\ran$ and $\lan f\ran$, in the role of $g$ and $f$, satisfy condition $(C_2)$, with $q_e=\infty$ and $\|\bar{f}\|_{L^2(Q)}\leq N$ (instead of $\|\bar{f}\|_{L^{2,1}(Q)}\leq N$).

\par For problem $\lan\mathcal{P}\ran_m$, we impose the additional  conditions on its initial and boundary data 
\begin{gather}
 \|\lan u^0\ran\|_{H^1(\Om)}+\|\hth^0\|_{H^1(\Om)}\leq N,
\label{idhN}
\\[1mm]
\|\mathbf{u}_b\|_{H^1(0,T)}
+\|\mathbf{p}_b\|_{W^{1,1}(0,T)}+\|{\bm{\pi}}_b\|_{W^{1,1}(0,T)}\leq N,
\label{ftbdhN}
\\[1mm]
 u_0(0)=\lan u^0\ran(0)\ (m=1),\ \ u_X(0)=\lan u^0\ran(X)\ (m=1,2),
\label{conhcond}
\end{gather}
the last of which are the conjunction conditions between the boundary and initial data for $\hu$.

\par Under the listed properties and additional conditions on the data, the  unique weak solution to problem $\lan\mathcal{P}\ran_m$ is  \textit{almost strong}, i.e.,  $\hu,\hth\in W(Q)$ and
$\hsi,\hpi\in V_2(Q)$ as well as equations \eqref{eq2g}--\eqref{eq5g} are valid in $L^2(Q)$. Moreover, the bounds hold
\begin{gather}
\|\hu\|_{C(\bar{Q})}+\|\hth\|_{C(\bar{Q})}\leq C(\|\hu\|_{W(Q)}+\|\hth\|_{W(Q)})\leq CK(N),
\label{almreghsol}\\
\|\hsi\|_{V_2(Q)}+\|\hpi\|_{V_2(Q)}\leq K(N).
\label{almreghsol 2}
\end{gather}
These regularity properties of the weak solution follow from \cite[Theorem 4.2]{AZRM99}; some additional properties can be also found there.  
Notice carefully that the solution is \textit{almost} strong since neither conditions on $D\lan\eta^0\ran$ are imposed, nor the existence of $D\lan\eta\ran$ is asserted.

\par On the other hand, from formula \eqref{expforeta} for $\eta$
and its properties \eqref{eta in CLL} we get that $\eta^{(\ve)}\in C(0,T;L^\infty(\Om))$, $(K_0(N))^{-1}\leq\eta^{(\ve)}\leq K_0(N)$ in $Q$ and $\eta^{(\ve)}$ can be represented by the respective formula
\[
\eta^{(\ve)}=\hat{B}\big[\eta^{0,\ve}+\tfrac{k}{\nu}I_t(\hat{B}^{-1}\hth)\big]\ \ \text{in}\ \ Q.
\]
Consequently, $D_t\eta^{(\ve)}\in L^2(Q)$, and $\eta^{(\ve)}$ satisfies the equation and initial condition
\be
D_t\eta^{(\ve)}=\tfrac{1}{\nu}\hsi\eta^{(\ve)}+\tfrac{k}{\nu}\hth\ \ \text{in}\ \ Q,\ \ \eta^{(\ve)}|_{t=0}=\eta^{0,\ve}
\label{eq1g2}.
\ee

\par We define the functions
\[
\beta^{(\ve)}:=\tfrac{1}{\nu}\hsi\eta^{(\ve)}-\tfrac{1}{\nu}(D\hu-k\hth)
=\tfrac{1}{\nu}\hsi(\eta^{(\ve)}-
\lan\eta\ran)=\tfrac{1}{\nu}\hsi R\eta^{(\ve)},\ \
\gamma^{(\ve)}:=\tfrac{1}{\la}\hpi(\eta^{(\ve)}-\lan\eta\ran)=\tfrac{1}{\la}\hpi R\eta^{(\ve)}.
\]
Clearly $\beta^{(\ve)},\gamma^{(\ve)}\in L^{2,\infty}(Q)$, see bound \eqref{almreghsol 2}.
Then, for the functions $(\eta^{(\ve)},\hu,\hth,\hx_e)$,
from equations \eqref{eq1g2} and \eqref{eq2g}--\eqref{eq5g}
we derive the following IBVP $\widehat{\mathcal{P}}_m^{(\ve)}$ of type $\widehat{\mathcal{P}}_m$ formed by the system of equations
\begin{gather*}
D_t\eta^{(\ve)}=D\hu+\beta^{(\ve)},
\\[1mm]
D_t\hu=D(\s^{(\ve)}+\nu\rho^{(\ve)}\beta^{(\ve)})+\lan g\ran[\hx_e],
\\[1mm]
c_VD_t\hth=D(\pi^{(\ve)}+\la\rho^{(\ve)}\gamma^{(\ve)})
 +(\s^{(\ve)}+\nu\rho^{(\ve)}\beta^{(\ve)})D\hu+\lan f\ran[\hx_e],
\\[1mm]
 D_t\hx_e=\hu,\ \
\s^{(\ve)}=\nu\rho^{(\ve)} D\hu-p^{(\ve)},\ \
 \rho^{(\ve)}=\tfrac{1}{\eta^{(\ve)}},\ \
p^{(\ve)}=k\rho^{(\ve)}\hth,\ \ \pi^{(\ve)}=\la\rho^{(\ve)} D\hth
\end{gather*}
in $Q$ supplemented with the initial conditions
\[
(\eta^{(\ve)},\hu,\hth,\hx_e)|_{t=0}=(\eta^{0,\ve},\hu^0,\hth^0,\hx_e^0)
\]
and the boundary conditions \eqref{bch} with $\mathbf{\hu}_b=\mathbf{u}_b$, $\mathbf{\hp}_b=\mathbf{p}_b$ and $\bm{\hpi}_b=\bm{\pi}_b$.
Here we have used the formulas
$\hsi=\s^{(\ve)}+\nu\rho^{(\ve)}\beta^{(\ve)}$ and $\hpi=\pi^{(\ve)}+\la\rho^{(\ve)}\gamma^{(\ve)}$
or, equivalently,
\[
 \tfrac{1}{\nu}\eta^{(\ve)}(\hsi-\s^{(\ve)})=\beta^{(\ve)},\ \
 \tfrac{1}{\la}\eta^{(\ve)}(\hpi-\pi^{(\ve)})=\gamma^{(\ve)}
\]
which follow from the equalities
$\eta^{(\ve)}\s^{(\ve)}=\nu D\hu-k\hth=\lan\eta\ran\hsi$
and $\eta^{(\ve)}\pi^{(\ve)}=\lambda D\hth=\lan\eta\ran\hpi$.

\par In addition, $\hx_e^0=I\eta^{0,\ve}+\beta_{e\ve}$ with $\beta_{e\ve}:=I(\lan\eta^0\ran-\eta^{0,\ve})=-IR_\ve\eta^0$.

\par Notice that the data of this problem satisfy conditions $(\hat{C}_1)$--$(\hat{C}_3)$, and its solution satisfies  assumptions imposed on the solution to problem $\widehat{\mathcal{P}}_m$ in the previous section.

\par The next theorem on a bound for the error of the two-scale homogenization is the second main result of the paper.
\begin{theorem}
\label{th4 eps}
Let the above conditions and the following additional weak regularity of the initial functions $\eta^0,u^0,e^0$ and the terms $g$ and $f$ in~$x$ be valid  
\begin{gather}
 \|(\eta^0,u^0,e^0)\|_{WH^{0,1;1}(J\times\Om)}\leq N,
\label{idregx}
\\[1mm]
 \sup_{0<\de<X}\|(\De_\de^{(1)} g,\De_\de^{(1)} f)\|_{L^1(J\times Q^\de;\,C[-a,a])}\leq C_1(a)\ \
 \text{for any}\ \ a>1.
\label{ftregx}
\end{gather}
Then the following bound for the error of the two-scale homogenization hold
\begin{gather}
\|\eta^{(\ve)}-\eta_\ve\|_{C(0,T;L^2(\Om))}
 +\|\hu-u_\ve\|_{L^2(Q)\cap L^\infty(0,T;H^{-1;m})}
 +\|\hth-\theta_\ve\|_{L^2(Q)}
\nonumber\\[1mm]
 +\|\hx_e-x_{e,\ve}\|_{L^\infty(Q)}
 +\|I_t(\hsi-\sigma_\ve)\|_{C(0,T;L^2(\Om))}\leq K(N)\ve.
\label{errest2}
\end{gather}
\end{theorem}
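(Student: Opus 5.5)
The plan is to apply Theorem~\ref{th3} with $\mathcal{P}_m=\mathcal{P}_m^{(\ve)}$ (solution $(\eta_\ve,u_\ve,\theta_\ve,x_{e,\ve})$, data $\eta^{0,\ve},u^{0,\ve},\theta^{0,\ve}$, $g^{(\ve)},f^{(\ve)}$) and $\widehat{\mathcal{P}}_m=\widehat{\mathcal{P}}_m^{(\ve)}$ (solution $(\eta^{(\ve)},\hu,\hth,\hx_e)$). The hypotheses of the theorem have already been checked in the text. Condition $(C_2)$ holds with $q_e=\infty$: the Lipschitz bound \eqref{dchig2} gives $D_\chi g^{(\ve)}$ bounded by $C_1(a)\bar g_1\in L^{1}(Q)=L^{1,1}$. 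The perturbed data satisfy $(\hat C_1)$--$(\hat C_3)$. Since the boundary data coincide, the left-hand side of \eqref{errest2} is bounded by $K(N)\De$. Here $\De$ consists of the initial differences, $\beta_e=\beta_{e\ve}$, $\beta=\beta^{(\ve)}$, $\g=\g^{(\ve)}$, $\lan g\ran-g^{(\ve)}$ and $\lan f\ran-f^{(\ve)}$. It therefore suffices to show each of these terms is $O(\ve)$.

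\emph{Initial data.} We have $\hat\eta^0-\eta^0=0$, since both equal $\eta^{0,\ve}$. Next, $\hu^0-u^{0,\ve}=-R_\ve u^0$, and $\|R_\ve u^0\|_{H^{-1;m}}\le C\|IR_\ve u^0\|_{C(\bar\Om)}\le C\ve N$ by Proposition~\ref{propeps}, Item~1, and \eqref{idregx}. For the energy term I use Remark~\ref{rem1}, Item~1, with the $H^{-1;3}$ norm throughout. By the choice \eqref{hth 0}, $\hat e^0-e^{0,\ve}=\lan e^0\ran-(e^0)^{(\ve)}=-R_\ve e^0$, which is $O(\ve)$ in $H^{-1;3}$ by the same proposition. (The cross terms with $\hu_\G(0)$ are handled by the formula for $\hat E^0-E^0$ at the end of Step~3 and are again $O(\ve)$.) Finally, $\beta_{e\ve}=-IR_\ve\eta^0$ satisfies $\|\beta_{e\ve}\|_{L^\infty}\le 2C\ve N$.

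\emph{Free terms.} I take $g_1=g^{(\ve)}[\hx_e]$-type Lipschitz remainders $=0$ and set
\[
g_2=\lan g\ran[\hx_e]-g^{(\ve)}[\hx_e]=-R_\ve\big(g[\hx_e]\big)(\cdot,t),
\]
where $g[\hx_e](\xi,x,t)=g(\hx_e(x,t),\xi,x,t)$. Proposition~\ref{propeps}, Item~2, with $r=2$ (or $r=1$ for the mean term) gives
\[
\|I^{\lan m\ran}g_2\|_{L^2(Q)}+\|\lan g_2\ran_\Om\|_{L^1(0,T)}\le C\ve\|g[\hx_e]\|_{WH^{0,1,0;1,1,2}}.
\]
The difference quotient $\De^{(1)}_\de(g[\hx_e])$ splits into $(\De_\de^{(1)}g)[\hx_e(\cdot+\de)]$ plus $\bar g_1\,\De^{(1)}_\de\hx_e$. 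The first piece is controlled by \eqref{ftregx}. The second is controlled because $D\hx_e=\lan\eta\ran$ is bounded, so $\hx_e$ is Lipschitz in $x$. Membership in $L^1(\Om;L^\infty(J))$ follows from Lemma~\ref{propsuper}. A possible mismatch of $L^r$ exponents in time would be absorbed by putting part into $g_1$. The same argument applies to $f$, using \eqref{in5a} and \eqref{est for F}: $\|\hf-f\|_{[H^{2,1}]^*}\le C\|I^{\lan 3\ran}(\cdot)\|_{L^2}+C\|\lan\cdot\ran_\Om\|_{L^1}$.

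\emph{Perturbations $\beta,\g$ (main obstacle).} I take $\beta_1=0$ and $\beta_2=\beta^{(\ve)}=\frac1\nu\hsi R_\ve\eta$, where $\eta(\xi,x,t)$ is given by \eqref{expforeta}. This requires $\eta(\cdot,t)\in WH^{0,1}$ uniformly in $t$. From \eqref{expforeta}, $\De_\de\eta=\hat B(\cdot+\de)\De_\de[\ldots]+(\De_\de\hat B)[\ldots]$. Here $\De_\de\eta^0$ is controlled by \eqref{idregx}, and $\De_\de\hat B$, $\De_\de\hth$ are controlled via $DI_t\hsi\in L^{2,\infty}$ (from \eqref{almreghsol 2}) and $D\hth\in L^2$. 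Hence $\sup_t\|\eta(\cdot,t)\|_{WH^{0,1;1}}\le K$.

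Since $\hsi$ is not constant in $x$, I would write
\[
I^{\lan 3\ran}(\hsi R_\ve\eta)=\hsi\,IR_\ve\eta-I\big((D\hsi)IR_\ve\eta\big)-\tfrac{x}{X}(\ldots),
\]
which is an integration by parts. Then, with $\|IR_\ve\eta\|_{L^\infty(0,T;C)}\le C\ve$ and $\hsi\in V_2$,
\[
\|I^{\lan3\ran}\beta_2\|_{L^{\infty,2}}+\|\lan\beta_2\ran_\Om\|_{L^1}\le K\ve .
\]
The pair $(q,r)=(\infty,2)$ lies in $M_1$, and with $q_e=\infty$ the same computation bounds $\|I_tI^{\lan1\ran}\beta_2\|_{L^{\infty}}$. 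For $\g^{(\ve)}=\frac1\la\hpi R_\ve\eta$, the identity \eqref{id b w vp} with $\hw$ replaced by $1$ gives $\|\g\|_{[V_{2;3_*}]^*}\le\sqrt X\|\lan\g\ran_\Om\|_{L^1}+\|I^{\lan3\ran}\g\|_{L^2}$, and the same integration by parts with $\hpi\in V_2$ gives $O(\ve)$.

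Collecting all terms gives $\De\le K(N)\ve$, which proves \eqref{errest2}. The delicate point is the uniform-in-$t$ $WH^{0,1}$ regularity of the two-scale $\eta$, which needs the almost strong regularity \eqref{almreghsol}--\eqref{almreghsol 2}.
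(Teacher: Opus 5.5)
Your overall route is the paper's. You apply Theorem~\ref{th3} to $\mathcal{P}_m^{(\ve)}$ and $\widehat{\mathcal{P}}_m^{(\ve)}$ with $\beta_1=g_1=0$, $q_e=\infty$ and $(q,r)=(\infty,2)\in M_1$. You use Remark~\ref{rem1}, Item~1, and Proposition~\ref{propeps} for $R_\ve u^0$, $R_\ve e^0$ and $\beta_{e\ve}$. You derive the uniform-in-$t$ difference-quotient bound for $\eta$ from \eqref{expforeta}, and you integrate by parts via $I(\hsi z)=\hsi\,Iz-I((D\hsi)Iz)$ for $\beta^{(\ve)}$ and $\g^{(\ve)}$. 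All of that is fine, and $D\hx_e=\lan\eta\ran$ is a neat shortcut.

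The gap is in the free term $g$. In \eqref{hzz} it enters as $\|I^{\lan m\ran}g_2\|_{L^2(Q)}$, which is $L^2$ in time. You therefore invoke Proposition~\ref{propeps}, Item~2, with $r=2$, and this requires $\sup_{\de}\|\De_\de^{(1)}(g[\hx_e])\|_{L^{1,1,2}(J\times Q^\de)}<\infty$. Neither ingredient you cite gives $L^2$ in $t$. Hypothesis \eqref{ftregx} controls $\De_\de^{(1)}g$ only in $L^1(J\times Q^\de;C[-a,a])$. The Lipschitz piece is bounded by $C_1(a)\bar g_1|\De_\de^{(1)}\hx_e|$, and only $\bar g_1\in L^1(Q)$ is assumed. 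So you only obtain $g[\hx_e]\in WH^{0,1,0;1,1,1}(J\times Q)$.

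Your fallback of moving part of $g_2$ into $g_1$ does not recover the rate. The term $g_1$ enters only through $\|g_1\|_{L^1(Q)}$, and in that norm the oscillating remainder $R_\ve g[\hx_e]$ is of order $1$, not $\ve$.

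The missing step is Remark~\ref{rem1}, Item~3. Since $\hu\in W(Q)$ with bound \eqref{almreghsol}, you have $\|D\hu\|_{L^{2,\infty}(Q)}\le K(N)$. Then the norm of $I^{\lan m\ran}g_2$ in \eqref{hzz} may be weakened to $L^{2,1}(Q)$. That norm is bounded by
$C\|IR_\ve g[\hx_e]\|_{L^{\infty,1}(Q)}\le C\ve\|g[\hx_e]\|_{WH^{0,1,0;1,1,1}(J\times Q)}\le K\ve$,
which matches the $L^1$-in-time hypotheses exactly. This is what the paper does.

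The same issue appears for $f$, in milder form. Do not pass through $\|I^{\lan 3\ran}(\cdot)\|_{L^2(Q)}$. Instead combine \eqref{est for F} with \eqref{ineq3} at $(q,r)=(2,1)\in M_0$, so that again only $r=1$ is needed.
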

\begin{proof}
1. We first establish the following weak regularity of $\eta$ in $x$:
\be
\sup_{0<\delta<X}\|\Delta_\delta^{(1)}\eta\|_{L^{1,1,\infty}(J\times Q^\de)}\leq K.
\label{etareg}
\ee

\par For $0<\de<X$, we apply the operator $\Delta_\delta$ to the explicit formula  \eqref{expforeta} for $\eta$ and get
\begin{gather*}
\Delta_\de\eta=(\Delta_\de \hat{B})\big[\eta^0+\tfrac{k}{\nu}I_t(\hat{B}^{-1}\hth)\big]+\hat{B}_{(\de)}\big\{\Delta_\de\eta^0+\tfrac{k}{\nu}I_t\big[(\Delta_\de (\hat{B}^{-1}))\hth+\hat{B}_{(\de)}^{-1}\Delta_\de\hth\,\big]\big\}
\end{gather*}
in $J\times Q^\de$, with $\hat{B}_{(\de)}(x,t):=\hat{B}(x+\de,t)$.
Since $\|I_t\hsi\|_{L^\infty(Q)}\leq K_1$ due to bound \eqref{almreghsol 2},  we have $K_2^{-1}\leq\hat{B}\leq K_2$ and consequently
\begin{gather*}
 |\Delta_\de\eta|\leq K|\Delta_\de I_t\hsi|\big(|\eta^0|+\|\hth\|_{L^{\infty,1}(Q)}\big)
+K\big(|\Delta_\de\eta^0|+\|\Delta_\de I_t\hsi\|_{L^2(0,T)}\|\hth\|_{L^2(0,T)}+\|\Delta_\de\hth\|_{L^1(0,T)}\big)
\end{gather*}
in $J\times Q^\de$.
Therefore, we obtain
\[
\|\Delta_\de^{(1)}\eta\|_{L^{1,1,\infty}(J\times Q^\de)}
 \leq K\big(\|DI_t\hsi\|_{L^{2,\infty}(Q)}\|\hth\|_{L^{\infty,1}(Q)\cap L^2(Q)}+\|\Delta_\de^{(1)}\eta^0\|_{L^1(J\times Q^\de)}+\|D\hth\|_{L^1(Q)}\big)\leq K_3
\]
owing to condition \eqref{idregx} on $\eta^0$ and bound \eqref{almreghsol}, i.e., bound \eqref{etareg} is proved.

\par Next, we prove the weak regularity of $g[\hat{x}_e]$ in $x$:
\be
\|g[\hat{x}_e]\|_{WH^{0,1,0;1}(J\times Q)}:=
\|g[\hat{x}_e]\|_{WH^{0,1,0;1,1,1}(J\times\Om\times (0,T))}\leq K.
\label{reg g in x}
\ee
Applying the obvious formula
\[
(\Delta_\de g[\hx_e])(\xi,x,t)=g(\hx_e(x+\de,t),\xi,x+\de,t)-g(\hx_e(x,t),\xi,x+\de,t)+((\Delta_\de g)[\hx_e])(\xi,x,t)
\]
in $Q^\de$ and the Lipschitz-type condition \eqref{dchig2} on $g$, we find
\[
\|\Delta_\de^{(1)}g[\hx_e]\|_{L^1(J\times Q^\de)}\leq K\|\Delta_\de^{(1)}\hx_e\|_{L^\infty(Q^\de)}
+\|\Delta_\de^{(1)}g\|_{L^1(J\times Q^\de;\,C[-a,a])},\ \ 0<\de<X,
\]
with $a=K'$ such that $|\hx_e|\leq K'$. 
Since $\hx_e=\hx_e^0+I_t\hu$ and thus $D\hx_e=\lan\eta^0\ran+I_tD\hu$, we have
\[
\sup_{0<\de<X}\|\Delta_\de^{(1)}\hx_e\|_{L^\infty(Q^\de)}\leq\|D\hx_e\|_{L^\infty(Q)}
\leq\|\lan\eta^0\ran\|_{L^\infty(\Om)}+\|D\hu\|_{L^{\infty,1}(Q)}\leq K
\]
owing to the formula $D\hu=\frac{\lan\eta\ran}{\nu}(\hsi+\hp)$, see \eqref{eq5g},  
and bound \eqref{almreghsol 2} for $\hsi$.
According to the regularity condition \eqref{ftregx} on $g$ and the bound  $|g[\hx_e]|\leq\bar{g}$ with $\|\bar{g}\|_{L^{\infty,1}(Q)}\leq N$, we get bound \eqref{reg g in x}.
Clearly the same bound holds for $f[\hx_e]$.

\smallskip\par 2. We apply Theorem \ref{th3} to the original problem $\mathcal{P}_m^{(\ve)}$ with the solution $(\eta_\ve,u_\ve,\theta_\ve,x_{e,\ve})$ and  problem $\widehat{\mathcal{P}}_m^{(\ve)}$ with the solution $(\eta^{(\ve)},\hat{u},\hat{\theta},\hat{x}_e)$ (constructed as explained above from the solution to problem $\lan\mathcal{P}\ran_m$).
In Theorem \ref{th3}, we choose the case $\beta_1=g_1=0$, $q_e=\infty$ and $(q,r)=(\infty,2)\in M_1$,  take into account Remark \ref{rem1}, Items 1 and 3 (using bound \eqref{almreghsol} for $\hu$), also apply inequalities \eqref{ineq3a} for $w=\g^{(\ve)}$  as well as \eqref{est for F} and \eqref{ineq3} for $F=R_\ve f[\hx_e]$ and derive
\begin{gather}
\|\eta^{(\ve)}-\eta_\ve\|_{C(0,T;L^2(\Om))}
 +\|\hu-u_\ve\|_{L^2(Q)\cap L^\infty(0,T;H^{-1;m})}
 +\|\hth-\theta_\ve\|_{L^2(Q)}
 +\|\hx_e-x_{e,\ve}\|_{L^\infty(Q)}
\nonumber\\[1mm]
 +\|I_t(\hsi-\sigma_\ve)\|_{C(0,T;L^2(\Om))}
\leq K\big(\|R_\ve u^0\|_{H^{-1;3}}
 +\|R_\ve e^0\|_{H^{-1;3}}
+\|\beta_{e\ve}\|_{L^\infty(\Om)}
\nonumber\\[1mm]+\|I\beta^{(\ve)}\|_{L^{\infty,2}(Q)} +\|I\gamma^{(\ve)}\|_{L^2(Q)}+\|IR_\ve g[\hx_e]\|_{L^{\infty,1}(Q)}+\|IR_\ve f[\hx_e]\|_{L^{\infty,1}(Q)}\big),
\label{th4for2}
\end{gather}
where, for example, $R_\ve g[\hx_e]=g^{(\ve)}[\hx_e]-\lan g\ran[\hx_e]=g^{(\ve)}[\hx_e]-\lan g[\hx_e]\ran$.

\par We further bound the terms on the right in inequality \eqref{th4for2}.
Since $\beta_{e\ve}=-IR_\ve\eta^0$, applying Proposition  \ref{propeps}, Item 1, and conditions \eqref{cond on init data with xi} and \eqref{idregx} on $(\eta^0,u^0,e^0)$, we obtain
\begin{gather}
 \|R_\ve u^0\|_{H^{-1;3}}
 +\|R_\ve e^0\|_{H^{-1;3}}
 +\|\beta_{e\ve}\|_{L^\infty(\Om)}
 \leq C\ve\|(u^0,e^0,\eta^0)\|_{WH^{0,1;1}(J\times\Om)}\leq CN\ve.
\label{th4for3}
\end{gather}

\par Notice that the inequality holds
\[
\|I(yz)\|_{C(\bar{\Om})}\leq C\|y\|_{W^{1,1}(\Om)}\|Iz\|_{C(\bar{\Om})}\ \ \text{for any}\ \ y\in W^{1,1}(\Om), z\in L^1(\Om)
\]
(like in \cite{AZCMMP96b}) which is proved by integrating by parts, cp. \eqref{cor2f6}.
Using sequentially this inequality,
bound \eqref{almreghsol 2} for $\hsi$ and $\hpi$ and Proposition \ref{propeps}, Item 2, for $r=\infty$, as well as the regularity bound \eqref{etareg} for $\eta$ (and its property \eqref{eta in CLL}), we derive
\begin{gather}
\|I\beta^{(\ve)}\|_{L^{\infty,2}(Q)}+\|I\gamma^{(\ve)}\|_{L^2(Q)}
 =\tfrac{1}{\nu}\|I(\hsi R_\ve\eta)\|_{L^{\infty,2}(Q)}
 +\tfrac{1}{\la}\|I(\hpi R_\ve\eta)\|_{L^2(Q)}
\nonumber\\[1mm]
 \leq C(\|\hsi\|_{L^2(Q)}+\|D\hsi\|_{L^{1,2}(Q)}+\|\hpi\|_{L^2(Q)}+\|D\hpi\|_{L^{1,2}(Q)})\|IR_\ve\eta\|_{L^\infty(Q)}
\nonumber\\[1mm]
 \leq K\ve\|\eta\|_{WH^{0,1,0;1,1,\infty}(J\times Q)}\leq K_1\ve.
\label{th4for4}
\end{gather}
Note that namely here we base on the fact that the solution to problem $\lan\mathcal{P}\ran_m$ is almost strong.

\par Due to Proposition \ref{propeps}, Item 2, for $r=1$, and the regularity bound \eqref{reg g in x} for $g[x_e]$, we have
\be
 \|IR_\ve g[\hat{x}_e]\|_{L^{\infty,1}(Q)}
 \leq 2\ve\|g[\hat{x}_e\|_{WH^{0,1,0;1}(J\times Q)}\leq K\ve.
\label{th4for5}
\ee

\par Applying bounds \eqref{th4for3}, \eqref{th4for4} and \eqref{th4for5} together with the last one with $f$ in the role of $g$ to estimate the right-hand side of bound \eqref{th4for2}, we complete the proof.
\end{proof}
\begin{corollary}
\label{th3cor3}
Let the hypotheses of Theorem \ref{th4 eps} be valid.
\par 1. The following error bounds in stronger norms hold 
\begin{gather}
 \|\eta^{(\ve)}-\eta_\ve\|_{L^\infty(Q)}+\|\hu-u_\ve\|_{L^{\infty,2}(Q)}
 +\|\hth-\theta_\ve\|_{L^{\infty,2}(Q)}
 \leq K(N)\ve^{1/2},
\label{cor3f1}\\[1mm]
 \|I_t(\hsi-\sigma_\ve)\|_{C(\bar{Q})}\leq K(N)\ve^{2/3}.
\nonumber
\end{gather}

\par 2. Also the following internal error bounds hold
\begin{gather}
\|\zeta(\hu-u_\ve)\|_{C(0,T;L^2(\Om))}
 +\|\zeta^2(\hth-\theta_\ve)\|_{C(0,T;L^2(\Om))}
 \leq K(N)\ve^{1/2},
\nonumber\\[1mm]
\|\zeta(\hu-u_\ve)\|_{C(\bar{Q})}+\|\zeta^2(\hth-\theta_\ve)\|_{C(\bar{Q})}
 \leq K(N)\ve^{1/4};
\label{cor3f4}
\end{gather}
recall that here $\zeta\in H^1(0,T)$, $\|D_t\zeta\|_{L^2(0,T)}\leq N$ and $\zeta(0)=0$.

\par 3. If $u^0=u^0(x)$ and $\th^0=\th^0(x)$ are independent of $\xi$, then the following error bounds hold
\begin{gather*}
\|\hu-u_\ve\|_{C(0,T;L^2(\Om))}+\|\hth-\theta_\ve\|_{C(0,T;L^2(\Om))}
 \leq K(N)\ve^{1/2},
\label{cor3f5}\\[1mm]
\|\hu-u_\ve\|_{C(\bar{Q})}+\|\hth-\theta_\ve\|_{C(\bar{Q})}
 \leq K(N)\ve^{1/4}.
\label{cor3f6}
\end{gather*}
\end{corollary}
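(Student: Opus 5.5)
The plan is to combine Theorem~\ref{th4 eps} with Corollaries~\ref{th3cor1} and~\ref{th3cor2}, applied to the pair of problems $\mathcal{P}_m^{(\ve)}$ and $\widehat{\mathcal{P}}_m^{(\ve)}$ exactly as in the proof of Theorem~\ref{th4 eps}. In that proof, the sum $\De$ of data norms on the right of \eqref{hzz} (with $\beta_1=g_1=0$, $q_e=\infty$, and Remark~\ref{rem1}, Items 1 and 3) was bounded by $K(N)\ve$ through \eqref{th4for2}--\eqref{th4for5}. So $\De\leq K\ve$, and every H\"older-type bound in the corollaries becomes a power of $\ve$. The task is to check the extra hypotheses of the corollaries for the homogenized solution $(\eta^{(\ve)},\hu,\hth,\hx_e)$ and for the data of $\mathcal{P}_m^{(\ve)}$.

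\textbf{Item 1.} The hypothesis $\|\bar g\|_{L^{2,1}(Q)}\leq N$ holds because $\|\bar g\|_{L^{\infty,1}(Q)}\leq N$, and the latter bound on $\bar g$ is assumed for both $g^{(\ve)}$ and $\lan g\ran$. Therefore \eqref{cor1f1} gives $\|\hu-u_\ve\|_{L^{\infty,2}(Q)}\leq K\ve^{1/2}$, and \eqref{cor1f2} gives the $\ve^{2/3}$ bound for $I_t(\hsi-\sigma_\ve)$. By \eqref{almreghsol}, $\|D\hth\|_{L^2(Q)}\leq K(N)$, so \eqref{cor1f3} gives the bound for $\hth-\theta_\ve$. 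For \eqref{cor1f4}, both problems start from the same $\eta^{0,\ve}$, so $\|\hat\eta^0-\eta^0\|_{L^\infty(\Om)}=0$ and we get $\|\eta^{(\ve)}-\eta_\ve\|_{L^\infty(Q)}\leq K\ve^{1/2}$.

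\textbf{Item 2.} We apply Corollary~\ref{th3cor2}, Item 1. The data conditions $\|\zeta^2\bar f\|_{L^2(Q)}\leq N$, $\|D_t(\zeta\bu_b)\|_{L^{4/3}(0,T)}\leq N$ and $\|\zeta^2\bm{\pi}_b\|_{W^{1,1}(0,T)}\leq N$ follow from $\|\bar f\|_{L^2(Q)}\leq N$, from \eqref{ftbdhN}, and from $\zeta\in H^1(0,T)\subset C[0,T]$ (the product rule brings in $D_t\zeta\in L^2$ times bounded functions). The regularity $\|\zeta\hu\|_{W(Q)}+\|\zeta^2\hth\|_{W(Q)}\leq \hat K(N)$ follows from \eqref{almreghsol}, since $\|\hu\|_{W(Q)}\leq K$ and $D_t(\zeta\hu)=(D_t\zeta)\hu+\zeta D_t\hu$ with $\hu\in C(\bar Q)$. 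Then \eqref{cor2f1} and \eqref{cor2f2} with $\De\leq K\ve$ give the two bounds.

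\textbf{Item 3.} This is the only place needing care. We invoke Corollary~\ref{th3cor2}, Item 2. If $u^0$ and $\th^0$ do not depend on $\xi$, then $u^{0,\ve}=u^0=\lan u^0\ran$, and by \eqref{hth 0} also $\hth^0=\th^0=\th^{0,\ve}$. The $H^1$ bounds on $u^{0,\ve}$ and $\th^{0,\ve}$ and the conjunction conditions for $u_\ve$ then follow from \eqref{idhN} and \eqref{conhcond}. The bound $\|\hu\|_{W(Q)}+\|\hth\|_{W(Q)}\leq\hat K$ is \eqref{almreghsol}, and the remaining boundary and source conditions are in \eqref{ftbdhN} and the assumption on $\bar f$. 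Corollary~\ref{th3cor2}, Item 2, then yields \eqref{cor2f3} and \eqref{cor2f4} with $\De^{1/2}\leq K\ve^{1/2}$ and $\De^{1/4}\leq K\ve^{1/4}$.

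The main obstacle I expect is making sure the regularity theorem behind Corollary~\ref{th3cor2} (from \cite{AZRM99}) applies to $\mathcal{P}_m^{(\ve)}$ uniformly in $\ve$. Its data $\eta^{0,\ve}$ and $g^{(\ve)},f^{(\ve)}$ oscillate, but only $\eta^0\in L^\infty$ and $L^2$-type bounds on $\bar g,\bar f$ enter there, and these are $\ve$-independent by \eqref{idvebounds}. So the constants stay uniform.
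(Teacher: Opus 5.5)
Your proposal is correct and follows the paper's own proof. As there, you use \eqref{errest2} to replace $\De$ by $K\ve$ in Corollaries~\ref{th3cor1} and~\ref{th3cor2}, and you check their extra hypotheses using the following facts: $(\eta^{(\ve)}-\eta_\ve)|_{t=0}=0$, the bounds on $\bar g,\bar f$, \eqref{ftbdhN} and \eqref{almreghsol}, and, in Item 3, the identities $u^{0,\ve}=\lan u^0\ran$ and $\th^{0,\ve}=\hth^0$, which transfer \eqref{idhN} and \eqref{conhcond} to the data of $\mathcal{P}_m^{(\ve)}$.
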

\begin{proof}
These error bounds follow from the main error bound \eqref{errest2} by virtue of the corresponding bounds in Corollaries \ref{th3cor1} and \ref{th3cor2}.
In Items 1 and 2, we take into account the equality 
$(\eta^{(\ve)}-\eta_\ve)|_{t=0}=0$,  conditions $\|(\bar{g},\bar{f})\|_{L^2(Q)\cap L^{\infty,1}(Q)}\leq N$ and \eqref{ftbdhN} on $\mathbf{u}_b$ and ${\bm{\pi}}_b$ as well as bound \eqref{almreghsol} for $\hu$ and $\hth$.
In Item 3, $u^0$ and $\th^0$ coincide with $\lan u^0\ran$ and $\hth^0$, and thus they have the same properties \eqref{idhN} and \eqref{conhcond}.
\end{proof}

We emphasize that the bound for $\eta^{(\ve)}-\eta_\ve$ in the $L^\infty(Q)$--norm in \eqref{cor3f1} and the internal error bound \eqref{cor3f4} in the $C(\bar{Q})$--norm
are valid for \textit{discontinuous} initial data.
In Item 3, $\eta^0$ is still $\xi$--dependent and can be discontinuous.

\bigskip\par
\noindent 
\textbf{Acknowledgements.}
This study was supported by 
the Basic Research Program at the HSE University (Laboratory of Mathematical Methods in Natural Science). 
\renewcommand{\refname}
{\begin{center}{\normalsize \textbf{REFERENCES}} \end{center}}
\small{

}
\end{document}